\newtheorem{thm}{Theorem}    % Standard theorem environment
\newtheorem{lem}{Lemma}[section]          % Lemma environment with numbering 
\newtheorem{prop}[lem]{Proposition}
\newtheorem{cor}[lem]{Corollary}
\newtheorem{example}[lem]{Example}
\newtheorem{defi}[lem]{Definition}    % Definition environment with 
\newtheorem{rem}{Remark}             % Unnumbered environment for remarks.
  \def\ZZ{\mathbb{Z}}
  \def\r{\mathfrak{r}}
  \def\D{\mathcal{D}}
  \def\H{\mathbb{H}}
  \def\U{\mathcal U}
  \def\Hom{\mathrm{Hom}}
  \def\GG{\SO_0(1,2)}
  \def\isom{Isom}
  \def\lsem{[\![}
  \def\rsem{]\!]}
\def\V{\mathcal V}
\def\A{\mathcal A}
\def\E{\mathbf E}
\def\N{\mathbb N}
\def\T{\mathcal{T}}
\def\d{\mathrm{d}}
\def\D{\mathcal{D}}
\def\SO{\mathrm{SO}}
\def\E{\mathbb{E}}
\def\BTZ{{\E_{0}^{1,2}}}
\def\sing{\mathrm{Sing}}
\def\RR{\mathbb{R}}
\newcommand{\mass}[1]{{\mathbb E}^{1,2}_{#1}}
\def\isom{\mathrm{Isom}}
\def\Reg{\mathrm{Reg}}
\def\fix{\mathrm{Fix}}
\def\reg{\Reg}
\def\V{\mathcal V}
\def\T{\mathcal{T}}
\def\W{\mathcal W} 
\def\SO{\mathrm{SO}}
\def\RR{\mathbb R}
\def\H{\mathbb{H}}
\def\E{\mathbb{E}}
\def\sing{\mathrm{Sing}}
\def\isom{\mathrm{Isom}}
\def\d{\mathrm{d}}
\def\N{\mathbb{N}}
\def\reg{\mathrm{Reg}}
\def\D{\mathcal{D}}
\def\BTZ{\mathbb{E}^{1,2}_0}
\def\U{\mathcal{U}}
\def\fix{\mathrm{Fix}}
\def\Reg{\mathrm{Reg}}
\def\r{\mathfrak{r}}
\def\fix{\mathrm{Fix}}
\def\sing{\mathrm{Sing}}
\def\Hom{\mathrm{Hom}}
\def\A{\mathcal A}
\def\B{\mathcal B}
\def\GG{\SO_0(1,2)}
\def\ord{\mathrm{Ord}}
\newcommand{\fonction}[5]{\displaystyle#1:\begin{array}{l|rcl}
& \displaystyle #2 & \longrightarrow & \displaystyle #3 \\
    & \displaystyle #4 & \longmapsto & \displaystyle #5 \end{array}}
\newcommand{\fonctiontrois}[9]{\displaystyle#1:\begin{array}{l|rcl}
& \displaystyle #2 & \longrightarrow & \displaystyle #3 \\
    & \displaystyle #4 & \longmapsto & \displaystyle #5\\
    & \displaystyle #6 & \longmapsto & \displaystyle #7\\
    & \displaystyle #8 & \longmapsto & \displaystyle #9\end{array}}
\definecolor{MyGreen}{rgb}{0.0,.5,0.0}
\definecolor{MyDarkRed}{rgb}{0.7,0,0}
\title{On branched covering of singular (G,X)-manifolds}
\author{L\'eo Brunswic}
\def\CatSpread{\mathbf{Spr}}
\def\CatSingGX{\mathbf{SingGX}}
\begin{document}

\maketitle
\begin{abstract}    % type your abstract below
Branched covering have a long history from ramification of Riemann surfaces to realization of 3 manifolds as covering ramified over a knots; from geometrical topology to algebraic geometry. The present work investigates a notion of branched covering "à la Fox" which is particularly natural for (G,X)-manifolds. The work is two fold. First, we recall and enrich the current state of the art (based upon Montesinos) and develop a Galois theory for such branched covering together with a description  of the fiber above branching points. As a consequence, we solve two open questions of Montesinos and construct an example related to another open question.  
Second, we present a theory of singular (G,X)-manifolds and apply the theory of branched covering we developped to extend the usual framework of (G,X)-manifolds to singular (G,X)-manifolds; in particular, we construct a developping map for such singular manifolds. An application to  singular locally Minkowski manifolds is given.
\end{abstract}

%%%%%%%%%%%%%%%%%%%%   Start of main body of article
\tableofcontents

        \section{Introduction}

Consider $\H$ the hyperbolic plane and $\GG$ its group of direct isometries. The quotient of $\H$ by a geometrically finite discrete without torsion subgroup of $\GG$, say $\Gamma$, gives a locally hyperbolic manifold, ie  a $\H$-manifold, say $\Sigma:=\Gamma\backslash\H$. Such  a $\Sigma$ admits a developping map $\D:\widetilde \Sigma\rightarrow \H$ and an holonomy homomorphism
$\rho:\pi_1(\Sigma)\rightarrow \GG$ where $\widetilde \Sigma$ is the universal covering of $\Sigma$ and $\pi_1(\Sigma)$ its fundamental group. If furthermore the holonomies of the ends of $\Sigma$ are parabolic then one can associate to each ends an ideal point "at infinity", effectively compactifying $\Sigma$ to some $\overline \Sigma$ (which is the Freudenthal compactification \cite{MR931673} of $\Sigma$). The manipulation of such points "at infinity" (or even irrational conical singularities), though legal and quite simple in this specific context, require some additional care in the general setting of $(G,X)$-manifolds especially if one consider non-metric analytical structures such as conformal structures, Lorentzian structures or affine structures ie $(Aff(\RR^n),\RR^n)$-structures. Compactifications and completions are not $(G,X)$-manifolds anymore but singular $(G,X)$-manifolds. These manifolds lack the fundamental properties that make the strength of $(G,X)$-manifolds: beyong elementary analyticity statements we need a good notion of developping map, holonomy and universal covering.  

A natural first step in this direction is to consider, for some singular $(G,X)$-manifold $M$, the universal covering $\widetilde\reg(M)$ of its regular locus $\reg(M)$, which give rise to a map $\widetilde\reg(M)\rightarrow M$, then try to complete $\widetilde\reg(M)$ in a natural way.  The solution has been provided by Fox in the 1950's.
Recall that a topological space is $T_1$ if the intersection of  all neighborhoods of a given point is a singleton. Fox \cite{MR0123298,MR1152367} introduced the notion of {\it spread}: let $(X,\T_X)$ and $(Y,\T_Y)$ be two locally connected $T_1$ topological spaces a continuous map $X\xrightarrow{p} Y$ is a spread if the topology of $X$ is generated by the connected components of $p^{-1}(\U)$ when $\U$ go through the open subsets of $Y$. In the following, all topological spaces are assumed at least $T_1$ and locally connected. A point  $x\in X$ is then {\it ordinary} if there exists a neighborhood $\U$ of $p(x)$ such that $p$ maps each connected component of $p^{-1}(\U)$ homeomorphically onto $\U$; the set of ordinary point of a spread is denoted by $\ord(X\xrightarrow{p}Y)$ or by abuse of notation $\ord(X)$. Notice that a spread $X\rightarrow Y$ is a covering iff $\ord(X)=X$ and $X$ is connected.  For our purpose, the main result of Fox is the existence, uniqueness and fonctoriality of the completion of spreads. A spread is {\it complete } if for every $y\in Y$ and every increasing choice $\U\mapsto \hat \U$ of a connected component of $p^{-1}(\U)$ for $\U$ connected open neighborhood of $y$, then $\bigcap_{\U}\hat \U\neq \emptyset$.  
Given a spread $X\rightarrow Y$, a spread $\overline X \xrightarrow{\overline p}Y$ together with an embedding $X\xrightarrow{\iota} \overline X$ lifting the identity $Y\rightarrow Y$ is a {\it completion} of $X\rightarrow Y$ if it is complete and if the image of $X$ is both dense and locally connected in $\overline X$ ie for all open connected subset $\W\subset \overline X$ then $\iota(X)\cap \W$ is connected. Fox proved that every spread admits a unique completion up to spread isomorphism, moreover its fonctoriality can then be expressed as follows:
if $X_i\xrightarrow{p} Y_i$ are spreads $i\in \{1,2\}$ and $(f,g)$ with $ f:Y_1\rightarrow Y_2, g:X_1\rightarrow X_2$ is a morphism of spread then there exists a unique map $h:\overline X_1\rightarrow \overline X_2$ lifting $g$ with $\overline X_i$ the completion of $X_i$ for $i\in \{1,2\}$. Since we apply spread results to $(G,X)$-manifold, we work in the context of first countable Hausdorff locally path connected topological spaces and one can check that, given a spread $X\rightarrow Y$, if $Y$ is first countable (resp. Hausdorff) then $X$ is first countable (resp. Hausdorff). Furthermore, if $X$ and $Y$ are second countable, then the completion $\overline X$ is also second countable.

Our starting point is then a notion of topological {\it branched covering} of a connected locally path connected Hausdorff first countable topological space $M$ as a connected surjective complete spread, the ordinary locii of which are open dense and locally connected both in the total space and the base space.This notion has already been considered at least implicitely by Fox \cite{MR0123298} and we refer to Montesinos \cite{MR3060537} for an up to date review.
The first contribution of the present work is a the construction of  a universal covering branched over a given locus as well as its path description akin to the description of the universal covering as a space of homotopy classes of path. We continue furnishing the state of the art by giving more precise descriptions of the fiber above a given point and by stating a Galois correspondance for topological branched covering. Such a correspondance is not new although to the author's knowledge it has not been expressed in the generality considered here which allows for instance Galoisian covering of the 3-sphere branched over a wild cantor. Several difficulties arise: the path description make use of a almost homotopic equivalence relation and a branched covering need not be Galoisian even if its ordinary part is Galoisian. A caracterisation of Galoisian cover among  

Since to the author knowledge no litterature describe general properties of abstract singular $(G,X)$-manifold, we provide elements of a theory of singular $(G,X)$-manifolds. A singular $(G,X)$-manifold is a secound countable Hausdorff topological space $M$ endowed with a $(G,X)$-structure almost everywhere ie on a dense open locally connected in $M$. Two key properties of such manifolds are given. First, the $(G,X)$-structure if extendable, can only be in a unique manner; therefore, there is a maximal regular locus which complement is an irreducible singular locus. Second, a.e. $(G,X)$-morphism are defined as continuous maps whose (co)-restriction to some open dense and locally connected subset is a $(G,X)$-morphism; such maps also have a maximal regular locus and an irreducible singular locus; furthermore, they preserver regular locii wherever they are local homeomorphisms.

Branched coverings of connected singular $(G,X)$-manifolds $M$ are then defined.
The main instance of such a branched covering we shall consider is the spread completion of the universal covering of the regular locus of $M$: the {\it universal covering branched over $\sing(M)$}.
We note that in this instance, the ordinary locus of the spread and the regular locus of the singular $(G,X)$-structure are identical.
To confirm this construction is appropriate to our purpose, we need properties akin to Galoisian covering, branched covering of riemannian surfaces and $(G,X)$-manifolds. To begin with, the deck transformation group, ie the Galois group of $\widetilde M \rightarrow M$, acts via a.e. $(G,X)$-morphisms and we can reuse the preliminary work on topological branched covering to have conditions ensuring that the Galois group acts transitively on the fibers and that $M$ is indeed the quotient of $\widetilde M$ by its Galois group . 
The next objective is to construct the developping map of $M$ by extending by continuity the developping map of $\reg(M)$ provided such an extension exists. We follow a route through properties of the local models of the base space. A singular $(G,X)$-manifold $M$ is locally modeled on $(X_\alpha)_{\alpha\in A}$ with each $X_\alpha$ being singular $(G,X)$-manifolds if each singular point of $M$ admits an open neighborhood isomorphic to an open neighborhood of a singular point of some $X_\alpha$. Provided it exists, under suitable assumptions, the branched covering of the local models can be related to local models of the universal covering branched over the singular locus. Furthermore, the existence of a developping map of $M$ is ensured by the existence of a developping map for the local models. 

As a simple example of the procedure, consider again the example of a finite volume complete hyperbolic surface $\Sigma$. The ideal compactification $\overline \Sigma$ of $\Sigma$ then admits a universal covering branched over the ideal points which can be identified as the the union of $\H^2$ and the set of points in $\partial \H$, the boundary of $\H$, fixed by a parabolic isometry in the holonomy of $\Sigma$. A basis of neighborhoods of a point of $\partial \H$ is given by the horodisks. Notice that the developping map $\widetilde \Sigma \rightarrow \H$ does not extends continuously to the universal covering of $\overline \Sigma$ branched over the ideal point: the image of a singular point should be sent to a point in $\partial \H$. One can then consider $\overline \H:=\H\cup\partial \H$ instead as a local model for $\Sigma$ and $\overline \Sigma$. This does not change the singular locus, but then universal branched covering of the local model of an ideal point is a horodisk (with its ideal point). In particular, the developping map extends injectively for local models and then to the universal branched covering of $\overline \Sigma$. Though the example of finite volume complete hyperbolic surfaces is elementary, we apply the present work to $\mass{0}$-manifolds which are 3-dimensional Lorentzian analogue of compact hyperbolic surface with cusps.
\subsection*{Acknowledgements}
This work is part of a project that has received funding from the European Research Council (ERC) under the European Union's Horizon 2020 research and innovation programme (grant agreement ERC advanced grant 740021--ARTHUS, PI: Thomas Buchert). The author thanks Thierry Barbot for his careful reading and corrections of the manuscipt, José María Montesinos Amilibia for his interest and advices,  Thomas Buchert for his continuous support,  Masoud Hasani,  Uira Noberto Matos de Almeida,  Ivan Izmestiev and Roman Prosanov for valuable discussions.

\section{Branched Covering à la  Fox from spreads to Galois correspondance}
Unless explicitely stated otherwise, all topological spaces are Hausdorff, locally path connected and first countable.
\subsection{Preliminaries on spreads}

\begin{defi} A spread $X\xrightarrow{p} Y$ is a continuous map such that the connected components of the preimages of open subsets of $Y$ generates the topology of $X$.

We define the category of spreads $\CatSpread$ whose objects are spreads and morphisms in $\mathrm{Hom}(X\xrightarrow{p} Y, X'\xrightarrow{p'} Y')$ are couples of continuous functions $(f,g)$  such that the following diagram commutes

$$\xymatrix{
X\ar[d]^p \ar[r]^g & X' \ar[d]^{p'} \\ Y \ar[r]^{f} & Y'
}.$$

\end{defi}

\begin{rem} Let $X\xrightarrow{p} Y$ be a spread, 
with $\U,\V$ open of $X$  and $Y$ respectively such that $p(\U)\subset \V$, the (co)restriction $p_{|\U}^{|\V}$ is a spread.
\end{rem}
\begin{defi}
 Let $X\xrightarrow{p} Y$ be a spread, the {\it ordinary locus} of $Y$  is the set of point $y\in Y$  for which there exists an open neighborhood $\U$ evenly coverd by $p$ ie such that $p$ maps each connected component of $p^{-1}(\U)$ homeomorphically onto $\U$. The ordinary locus of $X$ is the inverse image of the ordinary locus of $Y$.
\end{defi}
\begin{rem}
 The ordinary locii are open but may not be connected. The only obstruction for the restriction $\ord(X)\rightarrow \ord(Y)$ to be a covering is the connectedness of $\ord(X)$.
\end{rem}

\begin{lem}\label{lem:ord_iso} Let $X_i\xrightarrow{p_i} Y_i$ be spreads for $i\in \{1,2\}$ and let $(f,g)$ be a spread isomorphism from $X_1\rightarrow Y_1$ to $X_2\rightarrow Y_2$. Then $g(\ord(X_1)) = \ord(X_2)$ and $f(\ord(Y_1)) = \ord(Y_2)$.
 
\end{lem}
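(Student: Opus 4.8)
The plan is to reduce both equalities to the single statement $f(\ord(Y_1)) = \ord(Y_2)$ and then to prove that one by pushing evenly covered neighborhoods through the homeomorphism $f$. Since $(f,g)$ is a spread \emph{isomorphism}, both $f$ and $g$ are homeomorphisms and the defining square gives $p_2\circ g = f\circ p_1$. The first thing I would record is the elementary set identity $g\big(p_1^{-1}(A)\big) = p_2^{-1}\big(f(A)\big)$, valid for every $A\subseteq Y_1$, which follows from the commutation relation together with the bijectivity of $f$ and $g$. Applying it with $A = \ord(Y_1)$ and using $\ord(X_1) = p_1^{-1}(\ord(Y_1))$ yields $g(\ord(X_1)) = p_2^{-1}\big(f(\ord(Y_1))\big)$; hence once $f(\ord(Y_1)) = \ord(Y_2)$ is known, $g(\ord(X_1)) = p_2^{-1}(\ord(Y_2)) = \ord(X_2)$ is automatic. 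So it suffices to treat the base spaces.

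To show $f(\ord(Y_1))\subseteq \ord(Y_2)$, I would fix $y\in \ord(Y_1)$ together with an open neighborhood $\U$ of $y$ that is evenly covered by $p_1$, and then verify that $\U' := f(\U)$ --- an open neighborhood of $f(y)$ since $f$ is a homeomorphism --- is evenly covered by $p_2$. The identity above with $A=\U$ gives $p_2^{-1}(\U') = g\big(p_1^{-1}(\U)\big)$. Because $g$ is a homeomorphism it carries the connected components of $p_1^{-1}(\U)$ bijectively onto those of $p_2^{-1}(\U')$, so every component $C'$ of $p_2^{-1}(\U')$ is of the form $g(C)$ for a unique component $C$ of $p_1^{-1}(\U)$. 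On such a $C'$ the relation $p_2\circ g = f\circ p_1$ lets me factor the restriction as $p_2|_{C'} = f\circ (p_1|_C)\circ (g|_C)^{-1}$, a composite of three homeomorphisms ($g|_C\colon C\to C'$, the evenly covering map $p_1|_C\colon C\to\U$, and $f\colon \U\to\U'$). Hence $p_2$ maps $C'$ homeomorphically onto $\U'$, so $\U'$ is evenly covered and $f(y)\in\ord(Y_2)$.

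Finally, applying the same argument to the inverse isomorphism $(f^{-1},g^{-1})$ (which is again a spread isomorphism) gives $f^{-1}(\ord(Y_2))\subseteq \ord(Y_1)$, i.e. the reverse inclusion $\ord(Y_2)\subseteq f(\ord(Y_1))$; together these yield $f(\ord(Y_1)) = \ord(Y_2)$, and the total-space equality then follows from the first paragraph. The only point requiring genuine care --- the ``main obstacle'', though a mild one --- is the bookkeeping around connected components: one must check that $g$ induces a bijection between the components of $p_1^{-1}(\U)$ and those of $p_2^{-1}(\U')$ and that it restricts to a homeomorphism $C\to C'$ on each, so that the factorization of $p_2|_{C'}$ is legitimate. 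Everything else is formal manipulation of the commuting square.
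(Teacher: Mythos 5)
Your proof is correct and follows essentially the same route as the paper's: push an evenly covered neighborhood $\U$ through $f$, use the commuting square and bijectivity to identify $p_2^{-1}(f(\U))$ with $g\bigl(p_1^{-1}(\U)\bigr)$ component by component, and factor the restriction of $p_2$ to each component as a composite of homeomorphisms. The only difference is that you make explicit two steps the paper compresses into ``the result follows'' --- the reduction of the total-space equality via $\ord(X_i)=p_i^{-1}(\ord(Y_i))$ and the reverse inclusion via the inverse isomorphism $(f^{-1},g^{-1})$ --- which is a welcome clarification rather than a deviation.
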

\begin{proof}
 Let $\U$ be an open connected neighborhood of $p_1(x)$ evenly covered by $p_1$, let $(\hat\U_i)_{i\in I}$ the connected components of $p_1^{-1}(\U)$ and define $\V:=f(\U)$ and $\hat \V_i:= g(\hat \U_i)$ for $i\in I$. 
 Since $f$ and $g$ are bijective and $f\circ p_1=p_2\circ g$, we have $$p_2^{-1}(\V)=g\circ p_1^{-1}\circ f^{-1}(\V) = g\circ p_1^{-1}(\U) = \bigcup_{i\in I} \hat \V_i.$$   
 
 Furthermore, $p_{2|\hat\V_i}^{|\V}\circ g^{|\hat\V_i}_{|\hat\U_i} = f_{|\U}^{|\V}\circ p_{1|
 \hat\U_i}^{|\U}$ for all $i\in I$ and $g^{|\hat\V_i}_{|\hat\U_i}$ as well as $f_{|\U}^{|\V}\circ p_{1|
 \hat\U_i}^{|\U}$ are homeomosphims; therefore, for all $i\in I$, $p_{2|\hat\V_i}^{|\V}$ is an homeomorphism.
 Finally, $\V$ is evenly covered by $p_2$. The result follows.
 \end{proof}

\begin{defi}[Portly/skeletal subset] Let $X$ be a Hausdorff locally connected topological space, a subset $\U$ of $X$ is portly (in $X$) if $\U$ is open dense and locally connected in $X$. A subset $S\subset X$ is skeletal if its complement in $X$ is portly.
\end{defi}

\begin{example}
 The $k$-skeleton of a pure $n$-dimensional simplicial complex $X$ is skeletal for $k\leq n-2$.
 \end{example}

Portly subsets have nice properties, the proof of which rely on usual connectedness and/or density arguments and are thus left to the reader. Consider a locally connected Hausdorff topological space $X$.
\begin{itemize}
 \item finite intersections of portly subsets are portly;
 \item if $\U\subset \V$, $\U$ is portly in $X$ and $\V$ is open then $\V$ is portly (hence unions of portly subsets are portly);
 \item if $\U\subset \V$ and $\U$ is portly in $X$ then $\U$ is portly in $\V$;
 \item if $\U$ is portly in $\V$ and $\V$ is portly in $X$ then $\U$ is portly in $X$;

 \item locally portly subsets are portly (i.e. if $(\U_i)_{i\in I}$ is a familly of open subsets and for each $i\in I$, $\V_i\subset \U_i$ is a portly subset of $\U_i$, then $\bigcup_{i\in I}\V_i$ is portly in $\bigcup_{i\in I}\U_i$);
 \item if $X\xrightarrow{f} Y$ is a local homeomorphism and $\U\subset Y$ is portly in $Y$ then $f^{-1}(\U)$ is portly in $X$.
\end{itemize}

\begin{defi} A subset $D$ of $X$ is locally connected in $X$ if for all open connected subset $\U$ of $X$, the intersection $\U\cap D$ is connected.
\end{defi}

\begin{defi} Let  $X\xrightarrow{p} Y$ be a spread. For $y\in \overline{p(X)}$, consider the set $X_y$ of maps $\chi : \{\U\subset Y ~ \text{ open and connected neighborhood of }y\} \rightarrow \{\V\subset X ~ \text{open connected}\}$ increasing for the inclusion such that $\chi(\U)$ is a connected component of $p^{-1}(\U)$.
The spread $X\xrightarrow{p} Y$ is complete if for all $y\in \overline{p(X)}$ and all $\chi \in X_y$, the intersection of $\chi(\U)$ for $\U$ going through all open connected neighborhood of $y$ is non empty.

A completion of a spread $X\xrightarrow{p} Y$ is a spread $X'\xrightarrow{p'} Y$ together with an injective spread morphism $\iota$ lifting the identity on $Y$
$$\xymatrix{
X\ar[d]^p \ar[r]^\iota & X' \ar[d]^{p'} \\ Y \ar@{=}[r] & Y
} $$
such that $X'$ is complete and the image of $\iota$ is open dense and locally connected in $X'$.
\end{defi}
Recall that a functor $T$ is fully faithful if the induced function $\Hom(X,Y) \rightarrow \Hom(T(X),T(Y))$ is bijective.
\begin{thm}[\cite{MR0123298,MR2198590}] \label{theo:Fox_completion_functor}
 Every spread admits a unique completion up to isomorphism, furthermore the completion of spread is a fully faithful functor  from $\CatSpread$ to itself.
\end{thm}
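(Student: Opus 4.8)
The plan is to follow Fox's thread construction for existence, to read off uniqueness from the way a complete spread is determined by its threads, and to obtain functoriality and full faithfulness from a unique-extension property for morphisms; the delicate point will come at the very end.

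\emph{Existence.} Given a spread $p:X\to Y$, I would take as underlying set of the candidate completion $\overline X:=\bigsqcup_{y\in\overline{p(X)}}X_y$, with $\overline p(\chi):=y$ for $\chi\in X_y$, and topologise it by declaring, for every connected open $\U\subseteq Y$ and every connected component $V$ of $p^{-1}(\U)$, the set
$$\tilde V:=\{\,\chi\in\overline X:\overline p(\chi)\in\U \text{ and } \chi(\U)=V\,\}$$
to be basic open. The embedding is $\iota(x):=\chi_x$, where $\chi_x(\U)$ is the component of $p^{-1}(\U)$ containing $x$. First I would check the formal points: $\overline p$ is continuous with $\overline p\circ\iota=p$; $\iota$ is injective (using that $X$ is $T_1$, so the threads $\chi_x$ separate points); and $\overline p$ is a spread because the $\tilde V$ are, by construction, the connected components of the sets $\overline p^{-1}(\U)$. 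Inheritance of Hausdorffness and first countability from $Y$ is exactly what is recalled in the introduction.

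\emph{Completeness and portliness of $\iota(X)$.} Completeness is essentially tautological: a thread $\theta$ of $\overline X\to Y$ over $y$ selects for each connected open $\U\ni y$ one component $\tilde V_\U$ of $\overline p^{-1}(\U)$, and the assignment $\U\mapsto V_\U$ is then a thread $\chi\in X_y\subseteq\overline X$ with $\chi\in\tilde V_\U=\theta(\U)$ for all $\U$, so $\chi\in\bigcap_\U\theta(\U)$. Density and openness of $\iota(X)$ are immediate from the basis. The substantial point is local connectedness of $\iota(X)$ in $\overline X$: I would first compute $\iota(X)\cap\tilde V=\iota(V)$, which is connected because $V$ is connected and $\iota$ is a homeomorphism onto its image, and then propagate this to an arbitrary connected open $\W\subseteq\overline X$ using density of $\iota(X)$ and the overlap pattern of the basic sets. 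This local-connectedness verification, together with the connectedness of each $\tilde V$, is the technical heart of the existence statement.

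\emph{Uniqueness.} Let $(X',p',\iota')$ and $(X'',p'',\iota'')$ be two completions. Since $\iota'(X)$ is portly and dense and $X'$ is complete and Hausdorff, each $\xi'\in X'$ over $y$ determines a unique thread $\chi_{\xi'}\in X_y$ of the original spread (for each $\U$ the component $V$ with $\xi'\in\overline{\iota'(V)}$), and conversely each thread is realised by exactly one point of $X'$; the same holds for $X''$. Transporting threads therefore gives a bijection $X'\to X''$ over $Y$ compatible with $\iota',\iota''$, and the basis description shows it is a homeomorphism; this is the asserted isomorphism, and it is unique.

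\emph{Functoriality and full faithfulness.} The engine is the \emph{extension property}: any morphism $(f,g)$ from a spread $X_1\to Y_1$ to a \emph{complete} spread $Z\to Y_2$ extends to a unique morphism $(f,\bar g)$ from $\overline{X_1}\to Y_1$ to $Z\to Y_2$. Uniqueness is forced by density of $\iota_1(X_1)$ and Hausdorffness of $Z$; for existence I would push threads forward, sending $\chi\in (X_1)_y$ to the point of $Z$ determined by the thread assigning to each connected open $\V\ni f(y)$ the component of $p_Z^{-1}(\V)$ meeting $g(\chi(\U))$, for any connected open $\U\ni y$ with $f(\U)\subseteq\V$; this point exists by completeness of $Z$ and is unique by Hausdorffness. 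Applying this with $Z=\overline{X_2}$ defines the completion functor $T$ on morphisms, and functoriality follows from uniqueness of extensions. Faithfulness is then immediate, since $T(f,g)\circ\iota_1=\iota_2\circ g$ recovers $g$. The main obstacle is \textbf{fullness}: given a morphism $(\bar f,\bar h)$ between completions I must show $\bar h(\iota_1(X_1))\subseteq\iota_2(X_2)$, i.e. that $\bar h$ carries old points to old points, for only then is $g:=\bar h\circ\iota_1$ a genuine morphism $X_1\to X_2$ with $T(f,g)=(\bar f,\bar h)$. I expect to extract this from a local characterisation of the portly subset $\iota_2(X_2)$ inside $\overline{X_2}$ in terms of the spread structure and local connectedness, and this preservation of the regular part is the step I anticipate will require the most care.
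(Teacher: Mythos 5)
The first thing to note is that the paper does not prove this theorem at all: it is imported from Fox and Montesinos, and the introduction only records what is actually used later, namely unique existence of the completion and unique extension of a morphism of spreads to a morphism of the completions. So your proposal can only be judged against the standard arguments, and up to and including faithfulness it follows them correctly: the thread construction with basic sets $\tilde V$, the connectedness of $\tilde V$ (via $\iota(V)\subset \tilde V\subset \overline{\iota(V)}$), the essentially tautological completeness, the transport-of-threads uniqueness argument, and the extension "engine" (any morphism into a complete spread extends uniquely over the completion) all go through as you anticipate, and they yield a well-defined functor $T$ which is faithful.

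The genuine gap is fullness, and it is a gap you cannot close, because the statement you plan to prove --- that a morphism between completions carries $\iota_1(X_1)$ into $\iota_2(X_2)$ --- is false, and literal fullness of $T:\CatSpread\to\CatSpread$ fails. Take $X_1=Y_1=\{*\}$ with $p_1=\mathrm{id}$, a complete spread equal to its own completion; take $p_2:X_2\to Y_2$ with $Y_2=\RR^2$, $X_2$ the universal covering of $\RR^2\setminus\{0\}$ and $p_2$ the covering followed by the inclusion. This is a spread, and its completion $\overline{X_2}$ adds exactly one point $\xi$ over the origin, because the preimage of every disc centered at $0$ is connected (the punctured disc includes $\pi_1$-isomorphically into the punctured plane), so there is a single thread over $0$. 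Then $(\bar f,\bar h)$ with $\bar f(*)=0$ and $\bar h(*)=\xi$ is a morphism in $\CatSpread$ from $T(X_1\to Y_1)$ to $T(X_2\to Y_2)$, but it is not $T(f,g)$ for any morphism $(f,g):(X_1\to Y_1)\to(X_2\to Y_2)$, since any such $(f,g)$ satisfies $f(*)=p_2(g(*))\in\RR^2\setminus\{0\}$ while $\bar f(*)=0$: here $\bar h$ sends an old point to a new point. The correct reading of the last clause --- and the only form the paper ever uses, e.g.\ for uniqueness of completions and for extending morphisms and group actions --- is precisely your "engine": for every spread $X_1\to Y_1$ and every \emph{complete} spread $Z\to Y_2$, restriction along $\iota_1$ is a bijection
$$\Hom\bigl(\overline{X_1}\to Y_1,\ Z\to Y_2\bigr)\ \longrightarrow\ \Hom\bigl(X_1\to Y_1,\ Z\to Y_2\bigr),$$
i.e.\ completion is a reflector onto the full subcategory of complete spreads with unit $\iota$; a left adjoint is fully faithful only when its unit is an isomorphism, which is exactly what fails whenever completion adds points. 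So once you replace the fullness claim by this adjunction statement your proof is complete; as literally stated, the "fully faithful" clause is not provable because it is not true.
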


\begin{prop}[\cite{MR2198590}] \label{prop:spread_fiber}
 Let $X\xrightarrow{p} Y$ be a complete spread with $Y$ Hausdorff and let $b\in Y$. Let $\boldsymbol{\U}$ be a connected neighborhood basis of $b$. For $\U \in \boldsymbol{\U}$, let $X_\U$ be the space of connected components of $p^{-1}(\U)$ endowed with the discrete
 topology. Define bonding maps $X_\U\xrightarrow{\pi_{\U\V}} X_\V$ by setting for $\U\subset \V$ the image $\pi_{\U\V}(x)$ of $x\in X_\U$ to be the connected component of $x$ in $X_\V$.
 Then, $$p^{-1}(b) \simeq \varprojlim_{\U\in \boldsymbol{\U}}X_\U$$
 the projective limit being taken in the category of Hausdorff topological spaces.
\end{prop}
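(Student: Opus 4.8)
The plan is to produce an explicit bijection $\Phi\colon p^{-1}(b)\to\varprojlim_{\U\in\boldsymbol{\U}}X_\U$ and then check that it is bicontinuous, the completeness hypothesis being used exactly for surjectivity. First I would set up the natural map. For $x\in p^{-1}(b)$ and $\U\in\boldsymbol{\U}$ one has $p(x)=b\in\U$, so $x\in p^{-1}(\U)$; let $\Phi_\U(x)\in X_\U$ denote the connected component of $p^{-1}(\U)$ containing $x$. If $\U\subset\V$, the component of $x$ in $p^{-1}(\V)$ contains the one in $p^{-1}(\U)$, so $\pi_{\U\V}(\Phi_\U(x))=\Phi_\V(x)$ and the family $\Phi(x):=(\Phi_\U(x))_{\U}$ is a thread, i.e.\ a point of the projective limit. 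Two facts I would record here, both consequences of local connectedness of $X$, are that each $\Phi_\U(x)$ is \emph{open} in $X$ and that the connected components of $p^{-1}(\U)$, as $\U$ ranges over open subsets of $Y$, form a \emph{basis} of the topology of $X$ (given two such sets and a point of their intersection, the component of that point in the preimage of the intersection of the two base-opens lies in both).

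For injectivity I would use that $X$ is Hausdorff, hence $T_1$. If $x\neq x'$ in $p^{-1}(b)$, pick a basic open $W$, say a component of $p^{-1}(\U_0)$, with $x\in W\not\ni x'$. Since $p(x)=b\in\U_0$, choose $\U\in\boldsymbol{\U}$ with $\U\subset\U_0$. The component $\Phi_\U(x)$ is connected, contains $x$ and is contained in $p^{-1}(\U_0)$, so it lies in $W$ and therefore misses $x'$; hence $\Phi_\U(x)\neq\Phi_\U(x')$ and $\Phi(x)\neq\Phi(x')$.

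Surjectivity is where I expect the real work to sit, and it is the step that forces the completeness assumption. Given a thread $(x_\U)_{\U\in\boldsymbol{\U}}$, I would first extend it to an element $\chi\in X_b$ in the sense of the completeness definition, using that $\boldsymbol{\U}$ is coinitial (cofinal for reverse inclusion) among connected open neighborhoods of $b$: to such a neighborhood $\W$ assign the component of $p^{-1}(\W)$ containing $x_\U$ for any $\U\in\boldsymbol{\U}$ with $\U\subset\W$, the bonding relations $\pi_{\U'\U''}(x_{\U'})=x_{\U''}$ guaranteeing independence of the choice and monotonicity. Each $x_\U$ is nonempty, so $\U\cap p(X)\neq\emptyset$ for all $\U\in\boldsymbol{\U}$, whence $b\in\overline{p(X)}$ and completeness applies: $\bigcap_{\W}\chi(\W)\neq\emptyset$. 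Any $x$ in this intersection satisfies $p(x)\in\bigcap_{\U\in\boldsymbol{\U}}\U=\{b\}$ by $T_1$-ness of $Y$, so $x\in p^{-1}(b)$, and by construction $\Phi_\U(x)=x_\U$ for every $\U$, i.e.\ $\Phi(x)$ is the given thread. (Note that injectivity then shows this intersection is a single point, consistent with the map being a bijection; the cases $p^{-1}(b)=\emptyset$ and $\varprojlim_\U X_\U=\emptyset$ coincide by the same argument.)

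It remains to upgrade the bijection to a homeomorphism. The limit topology is the initial topology for the projections $q_\U$, with subbasic opens $q_\U^{-1}(c)$ for $c\in X_\U$ (singletons are open, $X_\U$ being discrete); their $\Phi$-preimages are $c\cap p^{-1}(b)$, open in the fibre since $c$ is open in $X$, so $\Phi$ is continuous. Conversely, given a basic open $W\cap p^{-1}(b)$ of the fibre with $W$ a component of $p^{-1}(\U_0)$ and $x\in W\cap p^{-1}(b)$, choose $\U\in\boldsymbol{\U}$ with $\U\subset\U_0$, so that $\Phi_\U(x)\subset W$; then $q_\U^{-1}(\Phi_\U(x))$ is a neighborhood of $\Phi(x)$ whose $\Phi$-preimage lies in $W\cap p^{-1}(b)$, which by surjectivity shows $\Phi$ is open. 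Finally, I would remark that taking the limit in Hausdorff spaces is harmless here: each $X_\U$ is discrete, hence Hausdorff, and the limit is a subspace of $\prod_\U X_\U$, so it is automatically Hausdorff and agrees with the limit computed in $\Top$.
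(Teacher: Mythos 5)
Your proposal is correct and follows essentially the same route as the paper's proof: the natural component map, injectivity from the Hausdorff property together with the fact that components of $p^{-1}(\U)$ form a neighborhood basis of points in the fiber, surjectivity from completeness, and matching bases of the two topologies for the homeomorphism. If anything, you are more careful than the paper on the surjectivity step, explicitly extending the thread from $\boldsymbol{\U}$ to all connected open neighborhoods of $b$ and verifying $b\in\overline{p(X)}$ before invoking completeness.
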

\begin{proof}
 Consider the map 
 $p^{-1}(b) \xrightarrow{\phi}\varprojlim X_\U$ defined by $\phi(x) = \left(x_\U\right)_{\U\in \boldsymbol{\U}}$ with $x_\U$ the connected component of $x$ in $p^{-1}(\U)$. For every $x\in p^{-1}(b)$, since $p$ is a spread, a neighborhood basis of $x$ is given by the connected components of $x$ in each $p^{-1}(\U)$ for $\U\in \boldsymbol{\U}$. If two elements $x_1,x_2 \in X$ have the same image by $\phi$ then $x_1,x_2 \in \phi(x_1)_\U$ for all $\U\in \boldsymbol{\U}$. Hence, $x_1,x_2$ have a common neighborhood basis. The space $Y$ is Hausdorff, then so is $X$ and then $x_1=x_2$. Therefore, $\phi$ is injective.
 Let $\hat{\boldsymbol{\U}}=\left(\hat\U\right)_{\U\in \boldsymbol{\U}} \in \varprojlim_{i\in I}X_i$, since $X\xrightarrow{p} Y$ is complete, the intersection $\bigcap_{\U\in \boldsymbol{\U}}\hat\U$ is non empty, hence $\phi$ is surjective; thus bijective. Furthermore, $\phi^{-1}(\hat{\boldsymbol{\U}}) = \bigcap_{\hat\U\in \hat{\boldsymbol{\U}}}\hat\U$. 
 
 Notice that for all $\U\in \boldsymbol{\U}$ and all $\V\in X_\U$, $$\phi(\V\cap p^{-1}(b)) = \pi_\U^{-1}(\V)$$
 {with $\pi_\U$ the natural map $\varprojlim_{\W\in \boldsymbol{\U}} X_\W \xrightarrow{~~~} X_\U$.}
 On the one hand, such $\pi_\U^{-1}(\V)$ form a basis of the topology of $\varprojlim X_\U$. On the other hand, since $p$ is a spread, such $\V\cap p^{-1}(\V)$ form a basis for the topology of $p^{-1}(b)$. 
 Finally, $\phi$ is an homeomorphism.
 
\end{proof}
\begin{cor}[\cite{MR2198590}]\label{cor:totdiscont} Let $X\xrightarrow{p} Y$ be a spread, then the fibers of $p$ are totally discontinuous. 
\end{cor}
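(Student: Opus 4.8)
The plan is to prove directly that for every $b\in Y$ the fiber $p^{-1}(b)$ is totally disconnected, by producing, for any two distinct points of the fiber, a clopen subset of the fiber that separates them. The only structural inputs needed are that $X$ is Hausdorff and locally connected (which follows from local path connectedness), together with the defining property of a spread: the connected components of the sets $p^{-1}(\U)$, for $\U$ open in $Y$, form a subbasis of the topology of $X$.

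First I would record the key local fact. If $\U\subset Y$ is open, then $p^{-1}(\U)$ is open, and since $X$ is locally connected, each connected component of $p^{-1}(\U)$ is both open and closed in $p^{-1}(\U)$. Consequently, whenever $b\in\U$ and $C$ is a connected component of $p^{-1}(\U)$, one has $p^{-1}(b)\subset p^{-1}(\U)$, so $C\cap p^{-1}(b)$ is clopen in the fiber $p^{-1}(b)$.

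Next, fix distinct points $x,y\in p^{-1}(b)$. As $X$ is Hausdorff (hence $T_1$), the set $X\setminus\{y\}$ is an open neighborhood of $x$. Since the components of preimages form a subbasis, there exist finitely many open sets $\U_1,\dots,\U_n$ of $Y$ and components $C_i$ of $p^{-1}(\U_i)$ with $x\in C_1\cap\dots\cap C_n\subset X\setminus\{y\}$. Each $C_i$ contains $x$, so $b=p(x)\in\U_i$, and by the previous step $C_i\cap p^{-1}(b)$ is clopen in $p^{-1}(b)$. Hence $W:=(C_1\cap\dots\cap C_n)\cap p^{-1}(b)$, being a finite intersection of clopen subsets of the fiber, is clopen in $p^{-1}(b)$, with $x\in W$ and $y\notin W$.

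Finally I would conclude as follows. Let $K$ be the connected component of $x$ in $p^{-1}(b)$. For the clopen set $W$ just constructed, $K\cap W$ is clopen in $K$ and contains $x$; by connectedness of $K$ this forces $K\cap W=K$, i.e.\ $K\subset W$, whence $y\notin K$. Since $y\in p^{-1}(b)\setminus\{x\}$ was arbitrary, $K=\{x\}$, so the connected components of $p^{-1}(b)$ are singletons, which is the claim. I expect no genuine obstacle; the only point requiring care is the appeal to local connectedness, which is exactly what guarantees that the subbasic neighborhoods are clopen in the relevant preimage and therefore meet the fiber in a clopen set — without it the components of $p^{-1}(\U)$ need not be open and the argument breaks down. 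As an alternative one could complete the spread via Theorem \ref{theo:Fox_completion_functor}, identify the completed fiber with an inverse limit $\varprojlim_{\U}X_\U$ of discrete spaces through Proposition \ref{prop:spread_fiber} (which is totally disconnected), and note that $p^{-1}(b)$ embeds into it as a subspace; but the direct argument is shorter and dispenses with the Hausdorffness hypothesis on $Y$.
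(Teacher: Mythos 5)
Your proof is correct, but it takes a genuinely different route from the paper. The paper treats this as an immediate consequence of Proposition \ref{prop:spread_fiber}: for a \emph{complete} spread with Hausdorff base, the fiber is homeomorphic to an inverse limit $\varprojlim_{\U} X_\U$ of discrete spaces, hence totally disconnected; to cover an arbitrary spread one must first pass to the Fox completion (Theorem \ref{theo:Fox_completion_functor}) and observe that $p^{-1}(b)$ embeds as a subspace of the completed fiber. You instead argue directly from the definition: local connectedness of $X$ makes each component $C$ of $p^{-1}(\U)$ clopen in $p^{-1}(\U)$, so $C\cap p^{-1}(b)$ is clopen in the fiber; the spread property plus $X$ Hausdorff (in fact $T_1$ suffices) then separates any two fiber points by a clopen set, forcing components of the fiber to be singletons. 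Your argument is more elementary and slightly more general --- it needs neither completeness, nor the completion functor, nor Hausdorffness of $Y$, all of which enter the paper's route --- and it handles the subbasis reading of ``generates the topology,'' which is the weakest interpretation. What the paper's route buys in exchange is finer information: the inverse-limit description identifies the fiber's topology precisely (this is exactly what powers the later fiber computations in Propositions \ref{prop:general_fiber_path} and \ref{cor:Galoisian_fiber} and the Cantor-set and $\mathbb{Z}^{\N}$ examples), whereas your argument yields only total disconnectedness. Both are valid; yours stands alone as a self-contained proof of the stated corollary.
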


\begin{defi}[Automorphisms of a spread] Let $X\xrightarrow{p} Y$ be a spread, an automorphisms of $p$ is an homeomorphism $\varphi$ of $X$ such that $p\circ \varphi = p$. 
Denote by  $\Gamma(X/Y)$ the group of spread automorphisms of $X$ above $Y$. 
We also denote by $\Gamma_x$ the stabilizer of $x$ if $x\in X$ and by $\Gamma_\U$ the set-wise stabilizer of $\U$ if $\U$  is a subset of $X$.
\end{defi}
Note that a spread automorphism is just an automorphism in the category $\CatSpread$.

\begin{lem}\label{lem:group_action} Let $X\xrightarrow{p}Y$  be a spread and let $\Gamma$ be a  totally discontinuous topological group acting continuously on $X$. 

If $\Gamma$ acts by spread automorphisms, then the action of $\Gamma$ extends uniquely to a continuous action by automorphisms on the completion of $X\xrightarrow{p}Y$.
\end{lem}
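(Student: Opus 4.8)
The plan is to produce the extended action abstractly from the functoriality of Fox completion (Theorem \ref{theo:Fox_completion_functor}), reserving the genuine work for the continuity of the resulting action. First I would extend each group element. By definition each $\gamma\in\Gamma$ is a spread automorphism of $X\xrightarrow{p}Y$, i.e. the pair $(\mathrm{id}_Y,\gamma)$ is an automorphism of the object $X\xrightarrow{p}Y$ in $\CatSpread$. Applying the completion functor of Theorem \ref{theo:Fox_completion_functor} gives a morphism $\overline\gamma$ of $\overline X\xrightarrow{\overline p}Y$ lifting $\gamma$ through $\iota$, so that $\overline\gamma\circ\iota=\iota\circ\gamma$ and $\overline p\circ\overline\gamma=\overline p$. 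Since a functor preserves isomorphisms, $\overline\gamma$ is a homeomorphism, hence $\overline\gamma\in\Gamma(\overline X/Y)$; and since the completion is a functor, $\overline{\gamma\gamma'}=\overline\gamma\,\overline{\gamma'}$ and $\overline{e}=\mathrm{id}_{\overline X}$. Thus $\gamma\mapsto\overline\gamma$ is a homomorphism $\Gamma\to\Gamma(\overline X/Y)$, i.e. an action of $\Gamma$ on $\overline X$ by spread automorphisms restricting to the given one on $\iota(X)$. Uniqueness is then immediate: as $Y$ is Hausdorff so is $\overline X$, and $\iota(X)$ is dense in $\overline X$, so any continuous self-map of $\overline X$ is determined by its restriction to $\iota(X)$; any continuous extension of $\gamma$ must coincide with $\overline\gamma$, and any continuous action extending the given one must coincide with the one built above (this also follows directly from the uniqueness of liftings in Theorem \ref{theo:Fox_completion_functor}).

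The remaining, and in my view \emph{main}, obstacle is the joint continuity of the action map $a:\Gamma\times\overline X\to\overline X$, $(\gamma,\hat x)\mapsto\overline\gamma(\hat x)$. I would first reduce to continuity at the points $(e,\hat x_0)$: writing $\gamma=\gamma_0\,(\gamma_0^{-1}\gamma)$ and using that $\Gamma$ is a topological group together with the fact that each $\overline{\gamma_0}$ is a homeomorphism, continuity at an arbitrary $(\gamma_0,\hat x_0)$ follows from continuity at $(e,\hat x_0)$. For the latter, since $\overline p$ is a spread, a neighborhood basis of $\hat x_0$ is given by the connected components $\hat\V$ of $\overline p^{-1}(\V)$ containing $\hat x_0$, where $\V$ ranges over connected open neighborhoods of $\overline p(\hat x_0)$ in $Y$. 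I would pick a point $x_0\in\hat\V\cap\iota(X)$, which is non-empty because $\iota(X)$ is dense. As each $\overline\gamma$ is a homeomorphism commuting with $\overline p$, it permutes the connected components of $\overline p^{-1}(\V)$, and distinct components are disjoint; hence $\overline\gamma(\hat\V)=\hat\V$ precisely when $\overline\gamma(x_0)=\gamma\cdot x_0$ lands in $\hat\V\cap\iota(X)$.

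The point is that this last condition is open in $\gamma$: the orbit map $\gamma\mapsto\gamma\cdot x_0$ is continuous on $X$, so its preimage of the open set $\hat\V\cap\iota(X)$ is an open neighborhood of $e$ in $\Gamma$. This is exactly where the hypotheses enter, the continuity of the action on $X$ being the essential analytic input and the totally disconnected structure of $\Gamma$ providing the natural setting. Consequently the setwise stabilizer $\Gamma_{\hat\V}=\{\gamma:\overline\gamma(\hat\V)=\hat\V\}$ is an open neighborhood of $e$, and for $\gamma\in\Gamma_{\hat\V}$, $\hat x\in\hat\V$ one has $\overline\gamma(\hat x)\in\overline\gamma(\hat\V)=\hat\V$, so $a(\Gamma_{\hat\V}\times\hat\V)\subset\hat\V$. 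Since the $\hat\V$ form a neighborhood basis at $\hat x_0$, this yields continuity at $(e,\hat x_0)$, and the reduction above then gives joint continuity everywhere, completing the construction of the unique continuous extended action.
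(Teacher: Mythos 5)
Your proof is correct, but it follows a genuinely different route from the paper's. The paper treats the whole action at once: it regards $\Gamma\times X\xrightarrow{(g,x)\mapsto g\cdot x} X$ as a single morphism of spreads over $Y$ (with $\Gamma\times X\to Y$ given by $(g,x)\mapsto p(x)$), applies the completion functor of Theorem \ref{theo:Fox_completion_functor}, and uses total disconnectedness of $\Gamma$ to argue that connected open subsets of $\Gamma\times X$ lie in single sheets $\{g\}\times X$, so that the completion of $\Gamma\times X\to Y$ is $\Gamma\times\overline X$; the extended map $\Gamma\times\overline X\to\overline X$ is then jointly continuous by construction and is the desired action. You instead extend element-by-element via functoriality (uniqueness of lifts giving the homomorphism property) and then prove joint continuity by hand: reduction to points $(e,\hat x_0)$, and openness of the setwise stabilizer $\Gamma_{\hat\V}$ of a basic open set $\hat\V$, which you correctly deduce from continuity of the action on $X$, openness and density of $\iota(X)$ in $\overline X$, and the fact that $\overline\gamma$ permutes the components of $\overline p^{-1}(\V)$. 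What your route buys: despite your closing remark, your argument never actually invokes total disconnectedness of $\Gamma$, so it proves the statement for an arbitrary topological group; it also sidesteps a delicate point in the paper's argument, namely that for a totally disconnected but non-discrete $\Gamma$ (e.g.\ profinite) the space $\Gamma\times X$ is not locally connected and the sets $\{g\}\times\hat\V$ are not open, so that viewing $\Gamma\times X\to Y$ as a spread and identifying its completion with $\Gamma\times\overline X$ really requires $\Gamma$ discrete or additional care. What the paper's route buys is brevity: a single application of Theorem \ref{theo:Fox_completion_functor} yields existence, the action axioms, and joint continuity simultaneously, with no pointwise topological arguments.
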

\begin{proof} 
The spread morphism 
$$\xymatrix{\Gamma\times X \ar[rr]^{(g,x)\mapsto g\cdot x} \ar[d]^p && X \ar[d]^p\\Y \ar@{=}[rr] && Y} $$
 extends naturally to the spread completions. Since $\Gamma$ is totally disconnected, connected open sets in $\Gamma\times X$ lie in a sheet $\{g\}\times X$, hence the completion of $\Gamma\times X$ is $\Gamma \times \overline X$ with $\overline X$ the completion of $X$. Finally,  the natural map $\Gamma\times \overline X \rightarrow \overline X$ is continuous and defines a continuous group action of $\Gamma$ on $\overline X$.
\end{proof}

\subsection{Primer on Branched covering} 
As before, unless explicitely stated otherwise, all topological spaces are Hausdorff, locally path connected and first countable. In the present section as well as the following, we will make intensive use of the following Lemma which, for brievety sake, we will not refer to systematically.
\begin{lem}\label{lem:path_covering} Let $X$ be a connected first countable, locally path connected, Hausdorff topological space and let $\U\subset X$ be a portly subset. 
For all $a\in \U$ and all $b\in X$, there exists a path $\gamma: [0,1]\rightarrow X$  from $a$ to $b$ such that $\gamma([0,1[)\subset \U$.
\end{lem}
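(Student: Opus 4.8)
The plan is to first establish that $\U$ is path connected, and then to reach $b$ by an infinite concatenation of paths living in shrinking connected neighborhoods of $b$ intersected with $\U$.

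First I would observe that $\U$ is itself path connected. Applying the definition of ``locally connected in $X$'' to the open connected subset $X$ (connected by hypothesis) shows that $\U = X\cap\U$ is connected; since $\U$ is open in the locally path connected space $X$ it is locally path connected, and a connected locally path connected space is path connected. In particular this already settles the case $b\in\U$: any path inside $\U$ joining $a$ to $b$ works.

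For the general case $b\in X$, I would use first countability together with local path connectedness to produce a \emph{decreasing} neighborhood basis $(\W_n)_{n\in\N}$ of $b$ made of open connected (indeed path connected) sets: starting from a countable basis, replace each member by a smaller open path connected neighborhood contained in the previous one, which is possible since finite intersections of the earlier neighborhoods are still open neighborhoods of $b$. Because $\U$ is portly, for each $n$ the set $\W_n\cap\U$ is open, nonempty (by density) and connected (the locally-connected-in-$X$ property applied to the connected open set $\W_n$); being open in $X$ it is again path connected. I would then choose $a_n\in\W_n\cap\U$ for every $n$, a path inside $\U$ from $a$ to $a_1$, and for each $n$ a path $\sigma_n$ from $a_n$ to $a_{n+1}$ inside $\W_n\cap\U$ (both endpoints lie in this path connected set because $\W_{n+1}\subset\W_n$).

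Finally I would place the path from $a$ to $a_1$ on $[0,\tfrac12]$ and each $\sigma_n$ on $[1-2^{-n},\,1-2^{-(n+1)}]$, setting $\gamma(1)=b$. On $[0,1[$ the resulting map is continuous because near each parameter only finitely many pieces are involved, and by construction $\gamma([0,1[)\subset\U$. The only genuine obstacle is continuity at the endpoint $t=1$: here I would use that $\sigma_n\subset\W_n$, so that for any neighborhood $N$ of $b$, choosing $n_0$ with $\W_{n_0}\subset N$ forces $\gamma(t)\in\W_{m}\subset\W_{n_0}\subset N$ for all $t$ beyond the start of $\sigma_{n_0}$ (using that $(\W_n)$ is decreasing and a basis). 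Hence $\gamma(t)\to b$, and since $X$ is Hausdorff the limit is exactly $b$, so $\gamma$ is the desired path.
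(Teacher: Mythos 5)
Your proof is correct and follows essentially the same route as the paper's: a decreasing basis of open connected neighborhoods of $b$, points of $\U$ chosen in each, paths joining them inside the successive intersections $\W_n\cap\U$ (path connected because $\U$ is portly), and an infinite concatenation converging to $b$. Your write-up merely makes explicit some details the paper leaves implicit, such as the parametrization of the concatenation and the verification of continuity at $t=1$.
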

\begin{proof}
 Let $(a,b) \in \U\times X$ and let $(\V_n)_{n\in \N}$ be a decreasing base of open connected neighborhood of $b$. Since $\U$ is portly, in particular $\U$ is dense and there exists a sequence $(b_n)_{n\in \N}$ such that  $\forall  n\in \N, b_n \in \V_n\cap \U$. Since $\U$ is portly and $X$ is connected then $\U$ is connected and we can choose a path $\gamma_0$ in $\U$ from $a$ to $b_0$. Then for $n\in \N$, $b_n$ and $b_{n+1}$ are both in $\U\cap \V_n$ and since $\U$ is portly and $\V_n$ open connected, $\U\cap \V_n$ is open connected (hence path connected) and we can choose a path $\gamma_{n+1}$ in $\U\cap \V_n$ from $b_n$ to $b_{n+1}$. The concatenation of the sequence of path $(\gamma_n)_{n\in \N}$ is a path $\gamma:[0,1[\rightarrow \U$ such that $\lim_{1^-}\gamma = b$. The result follows.
\end{proof}

\begin{defi}[Branched covering]  A {\it branched covering} is a complete surjective spread $X\rightarrow Y$, with $X$ and $Y$ connected, whose ordinary locii in $X$ and $Y$ are portly subsets of $X$ and $Y$ respectively. We call $Y$ the {\it base space}, $X$ the {\it total space} and the inverse image of a point $a$ of $Y$ is called the {\it fiber} above $a$.
 
 We say that $X\rightarrow Y$ is branched over $Y\setminus \ord(Y)$. The latter is the branching locus of $X\rightarrow Y$ and for $S\subset Y$, we say that $X\rightarrow Y$ is possibly branched over $S$ if $S$ contains his the branching locus.

 \end{defi}
 
 \begin{rem} The branching locii of a branched covering are skeletal thus closed with empty interior.
 \end{rem}

 We begin by a branched covering version of the lifting property for complete spreads ("Corollary of the extension Theorem" in Fox \cite{MR0123298}).
\begin{lem}\label{lem:complete_path_carac} 
 Let $X\xrightarrow{p} Y$ be a spread which induces a covering $X_1\rightarrow Y_1$ for some $X_1$ and $Y_1$ portly subsets of $X$ and $Y$ respectively. Then the following are equivalent:
 \begin{enumerate}[(i)]
  \item $X\rightarrow Y$ is a branched covering;
  \item $X\rightarrow Y$ is a complete spread
  \item for every Hausdorff locally connected topological space $Q$ and every homotopy $H:Q\times [0,1]\rightarrow Y$, every partial lift 
  $\hat H : Q \times [0,1[ \rightarrow X_1$ of $H$ extends continuously to some lift $Q \times [0,1] \rightarrow X$ of $H$;
  \item for every path $\gamma:[0,1]\rightarrow Y$ every partial lift 
  $\hat \gamma : [0,1[ \rightarrow X_1$ extends continuously to some lift $[0,1] \rightarrow X$ of $\gamma$.
  
 \end{enumerate}
\end{lem}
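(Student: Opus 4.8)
The plan is to prove the implications $(i)\Leftrightarrow(ii)$, $(ii)\Leftrightarrow(iv)$ and $(ii)\Rightarrow(iii)\Rightarrow(iv)$, which together close up into the four-fold equivalence. The equivalence $(i)\Leftrightarrow(ii)$ is essentially a matter of unwinding definitions: a branched covering is by definition a complete surjective spread with connected total and base spaces and portly ordinary locii, so $(i)\Rightarrow(ii)$ is immediate. For the converse, under the standing hypothesis the extra clauses are automatic: $X_1,Y_1$ are connected and dense so $X,Y$ are connected; the restriction of $p$ being a covering over $Y_1$ gives $Y_1\subset\ord(Y)$ and $X_1\subset\ord(X)$, so the ordinary locii are open supersets of portly sets and thus portly; and surjectivity onto $Y=\overline{p(X)}$ follows from completeness via Proposition~\ref{prop:spread_fiber}. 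All of this uses only the elementary stability properties of portly subsets recalled above.

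The core is the equivalence of completeness with path-lifting extension. For $(ii)\Rightarrow(iv)$, given a path $\gamma$ with $\gamma(1)=b$ and a partial lift $\hat\gamma\colon[0,1[\to X_1$, I would read off from the tail of $\hat\gamma$ an element $\chi\in X_b$: for each connected open neighborhood $\U$ of $b$, continuity of $\gamma$ gives $t_\U<1$ with $\gamma(]t_\U,1[)\subset\U$, so the connected set $\hat\gamma(]t_\U,1[)$ lies in a unique connected component $\chi(\U)$ of $p^{-1}(\U)$, and the assignment $\U\mapsto\chi(\U)$ is increasing because components sharing a common tail are nested. Completeness then yields a point $x\in\bigcap_\U\chi(\U)$, necessarily with $p(x)=b$ since $Y$ is Hausdorff, and since the $\chi(\U)$ form a neighborhood basis of $x$ (as $p$ is a spread) while $\hat\gamma(]t_\U,1])\subset\chi(\U)$, setting $\hat\gamma(1)=x$ extends the lift continuously. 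For the converse $(iv)\Rightarrow(ii)$, I would realize an arbitrary $\chi\in X_y$ by a path: choosing a decreasing countable basis $(\U_n)$ of connected neighborhoods of $y$ (first countability) and points $x_n\in\chi(\U_n)\cap X_1$ (density of $X_1$), the sets $\chi(\U_n)\cap X_1$ are open and connected, hence path connected, because $X_1$ is locally connected in $X$, so one concatenates paths in $\chi(\U_n)\cap X_1$ from $x_n$ to $x_{n+1}$ into a partial lift $\hat\gamma\colon[0,1[\to X_1$ of a path $\gamma$ ending at $y$. Applying $(iv)$, the endpoint $\hat\gamma(1)$ lies in each $\chi(\U_n)$, because $\hat\gamma([1-2^{-n},1])$ is a connected subset of $p^{-1}(\U_n)$ meeting the component $\chi(\U_n)$, hence $\hat\gamma(1)\in\bigcap_\U\chi(\U)$, proving completeness.

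It remains to upgrade path extension to homotopy extension, $(ii)\Rightarrow(iii)$, after which $(iii)\Rightarrow(iv)$ follows by taking $Q$ a one-point space. Using the path case already established, I would define the extension pointwise, setting $\hat H(q,1)$ to be the endpoint of the continuous extension of $t\mapsto\hat H(q,t)$; this is unambiguous since $X$ is Hausdorff. The only thing to check is joint continuity at a boundary point $(q_0,1)$. Fixing a basic neighborhood of $\hat H(q_0,1)$, namely the component $\hat\U$ of $\hat H(q_0,1)$ in $p^{-1}(\U)$ for a neighborhood $\U$ of $H(q_0,1)$, continuity of $H$ provides a connected open $W\ni q_0$ (here local connectedness of $Q$ is used) and $\epsilon>0$ with $H(W\times]1-\epsilon,1])\subset\U$. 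A two-step connectedness argument then confines the lift: for a fixed $t\in\,]1-\epsilon,1[$ the connected set $\hat H(W\times\{t\})$ lies in one component of $p^{-1}(\U)$ and meets $\hat\U$ through $\hat H(q_0,t)$, hence lies in $\hat\U$; and for each $q\in W$ the connected set $\hat H(\{q\}\times]1-\epsilon,1])$ meets $\hat\U$ through $\hat H(q,t)$, hence lies in $\hat\U$. Thus $\hat H(W\times]1-\epsilon,1])\subset\hat\U$, giving continuity. I expect this joint-continuity step to be the main obstacle, since it is where the local connectedness of the parameter space $Q$ and the spread structure must be combined to propagate the pointwise extension into a genuinely continuous one; the remaining arguments reduce to bookkeeping of nested connected components and the elementary properties of portly subsets.
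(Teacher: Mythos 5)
Your overall structure is sound and genuinely different from the paper's in two respects. Where the paper disposes of $(ii)\Rightarrow(iii)$ by citing Fox's ``corollary of the extension theorem'', you prove the homotopy extension directly: the pointwise extension via the path case followed by the two-step connectedness argument (first confining $\hat H(W\times\{t\})$ to the component $\hat\U$ through the already-controlled slice $\{q_0\}\times\,]1-\epsilon,1]$, then confining each $\hat H(\{q\}\times\,]1-\epsilon,1])$ through the slice at time $t$) is correct and makes the lemma self-contained. And where the paper closes the cycle by $(iv)\Rightarrow(i)$ --- embedding $X$ into its Fox completion $\overline X$ via Theorem \ref{theo:Fox_completion_functor} and using path lifting to show $\overline X = X$ --- you prove $(iv)\Rightarrow(ii)$ directly, realizing an arbitrary thread $\chi\in X_y$ by a concatenation of paths in the connected open sets $\chi(\U_n)\cap X_1$. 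This is the same underlying idea (paths realize threads) but avoids invoking the completion functor and the topological properties of $\overline X$ needed to apply Lemma \ref{lem:path_covering} there. Your $(ii)\Rightarrow(iv)$ (reading a thread off the tail of the partial lift, and noting that the sets $\chi(\U)$ form a neighborhood basis of the limit point because $p$ is a spread) is the standard argument and is fine.

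The one genuine flaw is the surjectivity step inside your $(ii)\Rightarrow(i)$. Proposition \ref{prop:spread_fiber} asserts only a homeomorphism $p^{-1}(b)\simeq\varprojlim_{\U}X_\U$; it does not assert that either side is nonempty, and indeed both sides can be empty simultaneously. Completeness guarantees that every thread $\chi\in X_b$ has nonempty intersection, but it does not guarantee that a thread over $b$ \emph{exists}: an inverse sequence of nonempty discrete sets with non-surjective bonding maps can have empty limit, and nothing in $(ii)$ alone rules this out. What produces a thread over an arbitrary $b\in Y$ is precisely the path construction: by Lemma \ref{lem:path_covering} choose a path $\gamma$ from a point of $Y_1$ to $b$ with $\gamma([0,1[)\subset Y_1$, lift it to $X_1$ by the covering property, and let the tail of the lift define $\chi$; completeness (or, once established, item $(iv)$) then yields a point of $p^{-1}(b)$. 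This is exactly the surjectivity argument in the paper's $(iv)\Rightarrow(i)$ step, and it is the same construction you already deploy in your $(iv)\Rightarrow(ii)$, so the repair is immediate --- but the appeal to Proposition \ref{prop:spread_fiber} as written is not a proof.
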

\begin{proof}
 One has $(i)\Rightarrow (ii)$ by definition, $(ii)\Rightarrow (iii)$ is a direct consequence of the corollary of Fox mentionned above and $(iii)\Rightarrow (iv)$ is trivial (it suffices to take $Q$ a singleton). 
 Assume $(iv)$ and consider some $y_0 \in Y_1$. Since $Y_1$ is portly in $Y$, for every $y\in Y$ by Lemma \ref{lem:path_covering} there exists a path $\gamma:y_0\rightsquigarrow y$ such that $\gamma([0,1[)\subset Y_1$. Since $X_1\rightarrow Y_1$ is a covering, $\gamma_{|[0,1[}$ lifts to some $\hat\gamma :[0,1[\rightarrow X_1$. By assumption, $\hat\gamma$ admits a limit $x$ at $1$ and by continuity, $p(x)=y$. Therefore, $p$ is surjective.  
 Consider $\overline X \rightarrow Y$ the completion of $X\rightarrow Y$; without loss of generality, we may assume that $X\subset \overline X$ and also denote by $p$ the map $\overline X\rightarrow Y$.   Notice that since $X_1$ is portly in $X$ and $X$ is portly in $\overline X$ then $X_1$ is portly in $\overline X$. Let $x \in \overline X$ and let $\hat y_0$ be some lift of $y_0$ in $X_1$. Let  $\hat \gamma$ be some path from $\hat y_0$ to $x$ in $\overline X$ such that $\hat\gamma([0,1[)\subset X_1$. Since $\hat\gamma_{|[0,1[}$ is a lift in $X_1$ of $p\circ \hat \gamma$ is a path in $Y$, by assumption, $\hat\gamma_{|[0,1[}$ has limit at $1$ in $X$. Therefore $x = \hat\gamma(1)= \lim_{1^-} \hat\gamma_{|[0,1[} \in X$. Finally, $\overline X\subset X$ hence $\overline X = X$ and $X\rightarrow Y$ is complete. 
\end{proof}
\begin{rem} 
 In Lemma \ref{lem:complete_path_carac}, one can take $X_1 = \ord(X)$ and $Y_1=\ord(Y)$.
\end{rem}

\begin{lem}\label{lem:spread_connected_image}
   Let $X\xrightarrow{p} Y$ be a branched covering, then for every connected open $\U\subset Y$ and every connected component $\V$ of $p^{-1}(\U)$ we have $p(\V)=\U$. In particular, $p$  is open.
\end{lem}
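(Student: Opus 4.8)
The plan is to establish the nontrivial inclusion $\U\subseteq p(\V)$, the reverse inclusion $p(\V)\subseteq\U$ being immediate from $\V\subseteq p^{-1}(\U)$; the openness of $p$ will then follow formally. Throughout I would exploit that $X$, being locally path connected, is locally connected, so that the connected component $\V$ of the open set $p^{-1}(\U)$ is both open and closed in $p^{-1}(\U)$. In particular, any point of $p^{-1}(\U)$ that is a limit in $X$ of points of $\V$ must itself lie in $\V$. I would also record two consequences of portliness: since $\ord(X)$ is portly in $X$ its trace $\ord(X)\cap\V$ is portly, hence nonempty, in $\V$, and likewise $\ord(Y)\cap\U$ is portly in $\U$.

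First I would fix a base point: choose $x_0\in\ord(X)\cap\V$ and set $y_0:=p(x_0)\in\ord(Y)\cap\U$. Let $y\in\U$ be arbitrary. Since $\ord(Y)\cap\U$ is portly in the connected space $\U$, Lemma \ref{lem:path_covering} furnishes a path $\gamma\colon[0,1]\to\U$ from $y_0$ to $y$ with $\gamma([0,1[)\subseteq\ord(Y)\cap\U$.

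Next I would lift and then complete. Because the restriction $\ord(X)\to\ord(Y)$ is a covering and $\gamma([0,1[)\subseteq\ord(Y)$, ordinary path lifting yields a partial lift $\hat\gamma\colon[0,1[\to\ord(X)$ with $\hat\gamma(0)=x_0$. As $p(\hat\gamma(t))=\gamma(t)\in\U$, the connected set $\hat\gamma([0,1[)$ lies in $p^{-1}(\U)$ and meets $\V$ at $x_0$, so $\hat\gamma([0,1[)\subseteq\V$. Applying Lemma \ref{lem:complete_path_carac}(iv), with $X_1=\ord(X)$ and $Y_1=\ord(Y)$ as permitted by the remark following it, this partial lift extends to a lift on all of $[0,1]$; writing $x:=\lim_{t\to 1^-}\hat\gamma(t)$ gives $p(x)=\gamma(1)=y$, hence $x\in p^{-1}(\U)$. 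Being a limit of points of $\V$ that lands in $p^{-1}(\U)$, the point $x$ lies in $\V$ by the clopenness of $\V$ in $p^{-1}(\U)$, so $y=p(x)\in p(\V)$. This yields $p(\V)=\U$. For the final assertion, since $p$ is a spread and $Y$ is locally connected, the connected components of $p^{-1}(\U)$ with $\U$ a connected open subset of $Y$ form a basis of the topology of $X$; each such basic open maps by $p$ onto the open set $\U$, and taking images of unions shows $p$ is open.

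I expect the main obstacle to be the two "staying in $\V$" steps. Ensuring that the lift never leaves the single component $\V$ is handled by the connectedness of $\hat\gamma([0,1[)$ together with $\V$ being a component of $p^{-1}(\U)$; ensuring that the completed endpoint returns to $\V$ rather than to a neighbouring sheet is handled by $\V$ being closed in $p^{-1}(\U)$ and $x$ lying in $p^{-1}(\U)$. A secondary technical point requiring care is the clean invocation of Lemma \ref{lem:complete_path_carac}, namely that $\ord(X)\to\ord(Y)$ genuinely plays the role of the covering $X_1\to Y_1$ there, so that completeness of the spread delivers the endpoint of the lift.
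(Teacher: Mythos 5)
Your proof is correct and follows essentially the same route as the paper's: fix a basepoint in $\ord(X)\cap\V$, join it to an arbitrary $y\in\U$ by a path through $\ord(Y)\cap\U$ (Lemma \ref{lem:path_covering}), lift it over the covering $\ord(X)\rightarrow\ord(Y)$, use completeness to obtain the endpoint of the lift, and conclude by connectedness and closedness of the component $\V$ in $p^{-1}(\U)$ that this endpoint lies in $\V$. The only differences are cosmetic: you invoke Lemma \ref{lem:complete_path_carac}(iv) where the paper unwinds the definition of a complete spread by hand (an increasing choice of components $\W\mapsto\hat\W$ with nonempty intersection), and you treat all endpoints $y\in\U$ uniformly where the paper first handles $y\in\ord(Y)\cap\U$ and then general $y$.
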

\begin{proof}
 To begin with, the (co-)restriction $p_{|\ord(X)}^{|\ord(Y)}$ of $p$ to the regular locus is a covering; indeed, $\ord(X)$ is connected. Let $\U\subset Y$ open and connected and let $\V$ be a connected component of $p^{-1}(\U)$.

 \begin{itemize}
  \item Since $\ord(X)$ is dense, we can choose some $\hat x\in \ord(X)\cap \V$; define $x:=p(\hat x)$. Since $Y$ is locally pathwise connected and since $\ord(Y)$ is open, dense and locally connected in $Y$, then $\ord(\U):=\ord(Y)\cap \U$ is path connected.  We then can and do consider a path $\gamma:[0,1]\rightarrow \ord(Y)$ from $x$ to some $y\in \ord(Y)$. Consider the unique lift $\hat\gamma$ of $\gamma$ in $\ord(X)$ such that $\hat\gamma(0)=\hat x$. On the one hand, $\hat\gamma([0,1])\subset p^{-1}(\U)$; on the other end, $\hat\gamma([0,1])$ is connected. Therefore, all points of $\hat \gamma$ are in $\V$ and $\gamma([0,1])\subset p(\V)$. 
  
  The path $\gamma$ is arbitrary, so $p(\ord(\V))=\ord(\U)$.
  \item Let $(x,y)$ in $\ord(\U)\times\U$, let $\hat x\in p^{-1}(x)\cap \V$ and let $\gamma:[0,1[\rightarrow \ord(\U)$ a path such that $\gamma(0)=x$ and $\lim_{1^-}\gamma = y$. Let $\hat\gamma$ the unique lift of $\gamma$ to $\ord(X)$ such that $\hat\gamma(0)=\hat x$.  For any open neighborhood $\W$ of $y$, consider $t_\W$ the smallest $t\in [0,1[$ such that $\gamma(]t_\W,1[)\subset \W $; then define $\hat \W$ the unique connected component of $p^{-1}(\W)$ which contains $\hat\gamma(]t_\W,1[)$. The map $\W\mapsto \hat\W$ is increasing for the inclusion, since $X\rightarrow Y$  is a complete spread, $\cap_{\W} \hat\W$ is a singleton say $\{\hat y\}$. In particular, $\lim_{1^-}\hat\gamma = \hat y$ and $p(\hat y)=y$. 
  
  Finally, $y\in p(\V)$.

  \end{itemize}
  
\end{proof}
\begin{lem}[\cite{MR2198590}]\label{lem:branched_covering_restriction}
 Let $X\xrightarrow{p} Y$ be a branched covering, let $\U$ be some open connected subset of $Y$ and let $\V$ be a connected component of $p^{-1}(\U)$. Then $\V\xrightarrow{p} \U$ is a branched covering.
\end{lem}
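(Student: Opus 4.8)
My plan is to verify, one by one, the clauses of the definition of branched covering for $\V\xrightarrow{p}\U$, obtaining completeness from the path-lifting characterisation of Lemma~\ref{lem:complete_path_carac}. I would start with the structural points: since $X$ is locally connected and $p^{-1}(\U)$ is open, its connected component $\V$ is open in $X$, and as $p(\V)\subset\U$ the corestriction $\V\xrightarrow{p}\U$ is a spread (corestrictions of a spread to open subsets are again spreads). Connectedness of $\V$ and $\U$ is part of the hypotheses, and surjectivity $p(\V)=\U$ is exactly Lemma~\ref{lem:spread_connected_image}.

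Next I would produce a portly covering inside $\V\to\U$ and deduce portliness of its ordinary loci. Put $Y_1:=\ord(Y)\cap\U$ and $X_1:=\ord(X)\cap\V$. The inclusions $\U\hookrightarrow Y$ and $\V\hookrightarrow X$ are open embeddings, hence local homeomorphisms, so $Y_1,X_1$ are portly in $\U,\V$ respectively (and, being portly in connected spaces, connected), with $X_1=p_{|\V}^{-1}(Y_1)$ because $\V\subset p^{-1}(\U)$. The saturation observation is the crux here: for connected open $\W\subset\U$, any connected component of $p^{-1}(\W)$ is connected and contained in $p^{-1}(\U)$, hence lies in a single connected component of $p^{-1}(\U)$, so it is either contained in $\V$ or disjoint from it. Therefore, over a connected open $\W\subset Y_1$ evenly covered by $X\to Y$, the set $\V\cap p^{-1}(\W)$ is a union of components of $p^{-1}(\W)$ each mapped homeomorphically onto $\W$, so $\W$ is evenly covered by $\V\to\U$ as well. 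This simultaneously shows that $X_1\to Y_1$ is a covering and that $Y_1\subset\ord(\U)$, whence $X_1\subset\ord(\V)$; since ordinary loci are open and contain the portly sets $Y_1$, $X_1$, the ordinary loci of $\V\to\U$ are portly.

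It remains to obtain completeness, for which I would verify condition $(iv)$ of Lemma~\ref{lem:complete_path_carac} for $\V\to\U$ using the portly covering $X_1\to Y_1$. Let $\gamma:[0,1]\to\U$ be a path and $\hat\gamma:[0,1[\to X_1$ a partial lift. Reading $\gamma$ as a path into $Y$ and $\hat\gamma$ as a partial lift into $\ord(X)$, and using that $X\to Y$ is a branched covering (so it satisfies $(iv)$ of Lemma~\ref{lem:complete_path_carac}), the lift $\hat\gamma$ extends continuously to $\hat\gamma:[0,1]\to X$. Since $p\circ\hat\gamma=\gamma$ is valued in $\U$, the connected set $\hat\gamma([0,1])$ lies in $p^{-1}(\U)$ and meets $\V$ (because $\hat\gamma([0,1[)\subset X_1\subset\V$), hence is contained in $\V$; the extension therefore lands in $\V$, establishing $(iv)$ for $\V\to\U$. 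By Lemma~\ref{lem:complete_path_carac}, $\V\to\U$ is then a complete spread, and combined with the previous paragraphs it is a branched covering.

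The step I expect to be the main obstacle is exactly this confinement of the extended lift to the prescribed component $\V$: it hinges on the connectedness of the lifted path together with the saturation observation, the same observation that makes the restricted map a covering over $Y_1$. Everything else reduces to routine bookkeeping with the stability properties of portly subsets.
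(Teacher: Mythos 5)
Your proposal is correct, and its first two stages coincide with the paper's proof: surjectivity of $\V\xrightarrow{p}\U$ via Lemma \ref{lem:spread_connected_image}, and the saturation observation (a connected component of $p^{-1}(\W)$ for $\W\subset\U$ connected open either lies in $\V$ or misses it) to show that evenly covered neighborhoods in $\U$ remain evenly covered for the restriction, whence portliness of the ordinary loci. Where you genuinely diverge is the completeness step. The paper verifies completeness directly from the definition: given an increasing family $(\W_i)$ of connected components of $q^{-1}(\U_i)$ over connected neighborhoods $\U_i$ of a point of $\U$, it passes to the components $\hat\W_i$ of $\W_i$ in $p^{-1}(\U_i)$, invokes completeness of $p$ to get $\bigcap_i\hat\W_i=\{x\}$, and then uses the same saturation observation to conclude $\W_i=\hat\W_i$, so $x\in\V$. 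You instead route through the path-lifting criterion (iv) of Lemma \ref{lem:complete_path_carac}: extend the partial lift inside $X$ using that $p$ is a branched covering, then confine the extended lift to $\V$ by connectedness of its image plus saturation. Both arguments hinge on the same key observation, but they buy slightly different things: the paper's argument works with arbitrary increasing families of components and never leaves the raw definition of a complete spread, so it is independent of the path machinery; yours is shorter and delegates the analytic content to an already-proved lemma, at the cost of invoking the standing first-countability and local path-connectedness hypotheses (harmless here, since they are ambient assumptions, and since you correctly establish beforehand the induced covering $X_1\rightarrow Y_1$ on portly subsets that Lemma \ref{lem:complete_path_carac} requires as a hypothesis). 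Note also that via (iv)$\Rightarrow$(i) your route re-derives surjectivity for free, so the appeal to Lemma \ref{lem:spread_connected_image} in your first paragraph is, strictly speaking, redundant.
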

\begin{proof}
For clarity sake, we denote by $\V\xrightarrow{q} \U$ the restriction of $p$ to $\V$.
  The map $\V\xrightarrow{q} \U$ is a spread and by Lemma \ref{lem:spread_connected_image}, $q$ is surjective.  
  
  Let $y\in \U$ and let $\W$ be an open connected neighborhood of $y$ in $\U$ evenly covered by $p$. Let $\hat\W$ be a connected component of $p^{-1}(\W)$, either $\hat\W$ and  $\V$ are disjoint or 
  $\hat\W\cap \V\neq \emptyset$ in which case $\hat\W\cup\V$ is connected and in $p^{-1}(\U)$ thus $\hat\W\subset \V$. Hence, the connected components of $q^{-1}(\W)$ are exactly the connected components of $p^{-1}(\W)$ that are in $\V$ and $\W$ is evenly covered. 
  The ordinary locus $\ord(q)$ of $p_{|\V}^{|\U}$ is then the intersection of the ordinary locus $\ord(p)$ of $p$ with $\V$. Furthermore, $\ord(p)$ is a dense, locally connected in $X$ open subset thus  its intersection with $\V$ is open,  dense and locally connected in $\V$. 
  
   Let $(\U_i)_{i\in I}$ be  the set of open connected neighborhoods of some $y\in Y$ and let $(\W_i)_{i\in I}$ be a familly of connected open subsets of $X$ such that $\W_i$ is a connected component of $q^{-1}(\U_i)$ for all $i\in I$, and such that $\W_i\subset \W_j$ if $\U_i\subset \U_j$ for all $i,j\in I$. Consider $\hat\W_i$ the connected component of $\W_i$ in $p^{-1}(\U_i)$. On the one hand, $p$ is a complete spread, there exists some $x\in X$ such that $\cap_{i\in I} \hat\W_i=\{x\}$. On the other hand, the same way as before, the connected components of $p^{-1}(\W_i)$ are either in $\V$ or disjoint from $\V$; therefore,  $\W_i= \hat\W_i$.  In particular, $x\in \V$ and $\cap_{i\in I}\W_i =\{x\}$. Finally,  $\V\xrightarrow{q} \U$ is complete thus a branched covering.

\end{proof}
\begin{cor} 
  Let $X\xrightarrow{p} Y$ be a branched covering, let $\U$ be some portly subset of $Y$ then $p^{-1}(\U)\xrightarrow{p}\U$ is a branched covering.
\end{cor}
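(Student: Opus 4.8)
The plan is to reduce the statement to Lemma~\ref{lem:branched_covering_restriction}. Since $\U$ is portly and $Y$ is connected, $\U$ is itself connected: indeed, taking $Y$ itself in the definition of local connectedness gives that $\U = Y\cap\U$ is connected. Thus $\U$ is an open \emph{connected} subset of $Y$, and Lemma~\ref{lem:branched_covering_restriction} already asserts that every connected component $\V$ of $p^{-1}(\U)$ yields a branched covering $\V\xrightarrow{p}\U$. Consequently the whole statement will follow once I show that $p^{-1}(\U)$ is connected, for then $p^{-1}(\U)$ is its own unique connected component and I may simply take $\V = p^{-1}(\U)$.

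The heart of the argument, and the step I expect to require the most care, is therefore the connectedness of $p^{-1}(\U)$. I would establish it through the ordinary locus. Recall (as in the proof of Lemma~\ref{lem:spread_connected_image}) that $p$ restricts to a covering $\ord(X)\to\ord(Y)$ with $\ord(X)$ connected. Since $\U$ and $\ord(Y)$ are both portly in $Y$, their intersection $\ord(Y)\cap\U$ is portly in $Y$, hence portly in the open subset $\ord(Y)$. As $\ord(X)\to\ord(Y)$ is a local homeomorphism, the preimage $\ord(X)\cap p^{-1}(\U) = (p_{|\ord(X)})^{-1}(\ord(Y)\cap\U)$ is portly in $\ord(X)$; being portly in the connected space $\ord(X)$, it is connected. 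To pass from this to the connectedness of $p^{-1}(\U)$ I would use density: since $p^{-1}(\U)$ is open and $\ord(X)$ is dense in $X$, the set $\ord(X)\cap p^{-1}(\U)$ is dense in $p^{-1}(\U)$, and a space admitting a dense connected subset is connected, so $p^{-1}(\U)$ is connected.

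With $p^{-1}(\U)$ connected, Lemma~\ref{lem:branched_covering_restriction} applied to the open connected set $\U$ and its connected component $\V = p^{-1}(\U)$ gives that $p^{-1}(\U)\xrightarrow{p}\U$ is a branched covering, which is exactly the claim. The only point demanding vigilance is the correct bookkeeping with the transitivity and restriction properties of portly subsets when transferring portliness between $Y$, $\ord(Y)$, $X$, $\ord(X)$ and $p^{-1}(\U)$; once those are handled the argument is routine, and in particular I do not expect to need the path-lifting characterisation of Lemma~\ref{lem:complete_path_carac} directly, since completeness and the portliness of the ordinary locus are already packaged inside Lemma~\ref{lem:branched_covering_restriction}.
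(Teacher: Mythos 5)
Your proof is correct and takes essentially the same route as the paper: both arguments reduce the corollary to the connectedness of $p^{-1}(\U)$ (so that Lemma~\ref{lem:branched_covering_restriction} applies with $\V = p^{-1}(\U)$ as the unique connected component) and derive that connectedness from the portliness of $p^{-1}(\U\cap\ord(Y))$ via the covering $\ord(X)\rightarrow\ord(Y)$. The only cosmetic difference is the final step: the paper transfers portliness up to $X$ (using that $\ord(X)$ is portly in $X$ and that an open superset of a portly set is portly, then connectedness of $X$), whereas you stop at connectedness inside $\ord(X)$ and finish with a dense-connected-subset argument; both are valid bookkeeping of the same idea.
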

\begin{proof} In view of Lemma \ref{lem:spread_connected_image}, it suffices to prove $p^{-1}(\U)$ is connected and since $X$ is connected is suffices to prove $p^{-1}(\U)$ is portly. Since $\U$ and $\ord(Y)$ are portly in $Y$, then $\U\cap \ord(Y)$ is portly in $Y$ hence in $\ord(Y)$. The map $\ord(X)\rightarrow \ord(Y)$ is a covering hence a local homeomorphism and $p^{-1}(\U\cap \ord(Y))$ is then portly in $p^{-1}(\ord(Y))=\ord(X)$. Furthermore, $\ord(X)$ is portly in $X$ then $ p^{-1}(\U\cap \ord(Y))$ is portly in $X$ and since $p^{-1}(\U\cap \ord(Y))$ is a subset of $p^{-1}(\U)$ which is open, $p^{-1}(\U)$ is portly in $X$.
 Finally, $X$ is connected then so is $p^{-1}(\U)$
\end{proof}

\begin{rem} Let $X\rightarrow Y$ be a branched covering, the spread induced on the ordinary locii $\ord(X)\rightarrow \ord(Y)$ is a covering.
\end{rem}
\begin{rem} The composition of branched coverings is a branched covering.
\end{rem}

\begin{lem}\label{rem:completion_covering} Let $X\xrightarrow{p} Y$ be a branched covering and $\U\subset Y$ some portly subset. The spread $X\rightarrow Y$ is isomorphic to the completion of $p^{-1}(\U)\rightarrow Y$.
\end{lem}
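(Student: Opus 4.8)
The plan is to reduce everything to the uniqueness clause of Fox's completion theorem (Theorem \ref{theo:Fox_completion_functor}): since a spread has a completion that is unique up to isomorphism, it suffices to exhibit $X\xrightarrow{p}Y$, together with the inclusion of $p^{-1}(\U)$, as \emph{a} completion of the spread $p^{-1}(\U)\xrightarrow{p}Y$. The desired isomorphism then follows formally.

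First I would record that the restriction $q:=p_{|p^{-1}(\U)}^{|Y}$ is genuinely a spread over $Y$: this is the (co)restriction remark applied to the open set $p^{-1}(\U)\subset X$, whose image $\U$ sits in $Y$. The inclusion $\iota:p^{-1}(\U)\hookrightarrow X$ is then an injective spread morphism lifting $\mathrm{id}_Y$, because $p\circ\iota=q$ and $\iota$ is a continuous injection of a subspace.

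The three conditions required by the definition of a completion are then: (a) $X$ is complete, which holds since $X\to Y$ is a branched covering, hence a complete spread; (b) the image $\iota(p^{-1}(\U))=p^{-1}(\U)$ is dense in $X$; and (c) this image is open and locally connected in $X$. Conditions (b) and (c) together amount to saying that $p^{-1}(\U)$ is portly in $X$, which is exactly what is established in the proof of the Corollary just above: since $\U\cap\ord(Y)$ is portly in $\ord(Y)$ and $\ord(X)\to\ord(Y)$ is a covering, hence a local homeomorphism, its preimage $p^{-1}(\U\cap\ord(Y))$ is portly in $\ord(X)$ and therefore portly in $X$; being contained in the open set $p^{-1}(\U)$, the latter is portly in $X$ as well. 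Note that the ``open dense and locally connected in $X$'' clause of being portly matches verbatim the requirement imposed on the image $\iota(p^{-1}(\U))$ in the definition of a completion.

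Having exhibited $(X\xrightarrow{p}Y,\iota)$ as a completion of $q$, the uniqueness of completions from Theorem \ref{theo:Fox_completion_functor} yields the asserted isomorphism between $X\to Y$ and the completion of $p^{-1}(\U)\to Y$. I expect no real obstacle here: the only substantive input, portliness of $p^{-1}(\U)$ in $X$, has already been secured in the preceding Corollary, and the remaining checks are purely formal, the most delicate being merely to match the ``locally connected in $X$'' half of portliness with the corresponding clause in the definition of completion.
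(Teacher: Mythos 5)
Your proof is correct and follows exactly the paper's route: the paper's own (one-line) proof likewise observes that $X\xrightarrow{p}Y$ is itself a completion of $p^{-1}(\U)\rightarrow Y$ and invokes the uniqueness clause of Theorem \ref{theo:Fox_completion_functor}. You have simply spelled out the verification (completeness of $X$, and portliness of $p^{-1}(\U)$ in $X$ via the preceding Corollary) that the paper leaves implicit.
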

\begin{proof}
 $X\rightarrow Y$ is a completion of the spread $p^{-1}(\U)\rightarrow Y$ which is unique up to isomorphism by Theorem~\ref{theo:Fox_completion_functor}.
\end{proof}

Recall that a connected, locally path connected Hausdorff topological space $X$ is {\it semi-locally simply connected} if every $x\in X$ admit a neighborhood $\U$ such that every loop $\gamma$ in $X$ of base point $x$ is trivial in $X$. By standard results \cite{MR1700700}, it a necessary and sufficient condition for such a topological space $X$ to admit a universal covering.

\begin{prop}\label{prop:universality} Let $Y$ be a connected, locally path connected, Hausdorff topological space.  Let $S$ be a skeletal subset of $Y$.

If $Y\setminus S$ is semi-locally simply connected then  there exists a covering $\widetilde Y^S$ of $Y$ possibly branched over $S$ which is maximal among such branched covering.
Furthermore, $\widetilde Y^S$
 is unique up to isomorphism and universal in the sense that for any covering $X\xrightarrow{p} Y$ possibly branched over $S$, there exists a covering $\widetilde Y^S\rightarrow X$ possibly branched over $p^{-1}(S)$.
\end{prop}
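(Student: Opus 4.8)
The plan is to construct $\widetilde Y^S$ as the Fox completion of the universal covering of the portly set $Y\setminus S$, and to derive every required property from the functoriality of completion (Theorem~\ref{theo:Fox_completion_functor}) together with the branched-covering criterion of Lemma~\ref{lem:complete_path_carac}. First I would record that $R:=Y\setminus S$ is portly, hence locally connected in $Y$; applying that property to the connected open set $\U=Y$ shows $R$ is connected. Being also locally path connected, Hausdorff and, by hypothesis, semi-locally simply connected, $R$ admits a universal covering $p_0\colon\widetilde R\to R$. Composing $p_0$ with the open inclusion $R\hookrightarrow Y$ gives a spread $\widetilde R\to Y$ (the connected components of the preimage of an open $\U\subset Y$ are exactly those of the preimage of $\U\cap R$ under $p_0$), and I would define $\widetilde Y^S$, with projection $q\colon\widetilde Y^S\to Y$, to be its completion, which exists and is unique up to isomorphism by Theorem~\ref{theo:Fox_completion_functor}.

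Next I would check that $q$ is a branched covering possibly branched over $S$. By construction $\widetilde R$ is portly in $\widetilde Y^S$ and restricts to a covering onto the portly set $R$, so Lemma~\ref{lem:complete_path_carac} upgrades completeness (which holds by construction) to the branched-covering property, while connectedness of $\widetilde Y^S$ follows from the density of the connected set $\widetilde R$. To locate the branching inside $S$, I would show the completion adds no point over $R$: for $y\in R$ choose an evenly covered connected $\U\subset R$, so that along a neighbourhood basis of $y$ contained in $\U$ every increasing choice of components stays in a single sheet and already has nonempty intersection in $\widetilde R$. Hence $q^{-1}(R)=\widetilde R$, every point of $R$ is ordinary, and the branching locus is contained in $S$. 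Uniqueness of $\widetilde Y^S$ is then immediate from uniqueness of the universal covering of $R$ and uniqueness of the completion.

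For universality, given any covering $p\colon X\to Y$ possibly branched over $S$, I would restrict over $R$. By the corollary to Lemma~\ref{lem:branched_covering_restriction}, $X_R:=p^{-1}(R)\to R$ is a branched covering with $X_R$ connected and portly in $X$; since the branching locus of $p$ lies in $S$, its ordinary locus contains $R$, so $X_R\to R$ has empty branching and is an honest covering. The universal property of $\widetilde R\to R$ then yields a covering $g\colon\widetilde R\to X_R$ over $R$, that is a spread morphism $(\mathrm{id}_Y,g)$ over $Y$ from $\widetilde R\to Y$ to $X_R\to Y$. By Lemma~\ref{rem:completion_covering} applied with $\U=R$, the space $X$ is the completion of $X_R\to Y$, whereas $\widetilde Y^S$ is the completion of $\widetilde R\to Y$; functoriality in Theorem~\ref{theo:Fox_completion_functor} produces a unique $h\colon\widetilde Y^S\to X$ with $p\circ h=q$ extending $g$.

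The remaining and, I expect, hardest step is to show $h$ is itself a covering possibly branched over $p^{-1}(S)$. First I would verify $h$ is a spread: as $q$ is a spread, the connected components of $q^{-1}(\U)=h^{-1}(p^{-1}(\U))$ form a basis of $\widetilde Y^S$, and these are components of $h$-preimages of the open sets $p^{-1}(\U)\subset X$, so, all components of $h$-preimages of opens being open by local connectedness, $h$ is a spread. Over $X_R=X\setminus p^{-1}(S)$ the map $h$ restricts to the covering $g$, and both $\widetilde R$ and $X_R$ are portly, so by Lemma~\ref{lem:complete_path_carac} it suffices to establish path lifting for $h$. Given a path $\gamma$ in $X$ with a partial lift $\hat\gamma\colon[0,1)\to\widetilde R$, the identity $p\circ h=q$ makes $\hat\gamma$ a partial lift of $p\circ\gamma$ under the complete spread $q$; hence $\hat\gamma$ extends to a full lift $\tilde\gamma$ of $p\circ\gamma$, and then $h\circ\tilde\gamma$ and $\gamma$ are continuous maps agreeing on the dense subset $[0,1)$ of $[0,1]$, so they coincide because $X$ is Hausdorff. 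Thus $h$ is a complete spread, therefore a branched covering branched over a subset of $p^{-1}(S)$, which is the asserted universality; maximality is the instance where $X$ itself ranges over coverings branched over $S$.
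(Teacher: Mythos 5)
Your proposal is correct and follows essentially the same route as the paper: define $\widetilde Y^S$ as the Fox completion of $\widetilde{Y\setminus S}\rightarrow Y$, identify any covering $X$ possibly branched over $S$ as the completion of its restriction $p^{-1}(Y\setminus S)\rightarrow Y$, and obtain the universality morphism from the functoriality of completion (Theorem~\ref{theo:Fox_completion_functor}). The only difference is one of detail: where the paper simply asserts that the induced map $\widetilde Y^S\rightarrow X$ is a branched covering possibly branched over $p^{-1}(S)$, you verify it explicitly (spread property, path lifting via Lemma~\ref{lem:complete_path_carac}, and the observation that the completion adds no points over $Y\setminus S$), which is a welcome filling-in of a step the paper leaves implicit.
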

\begin{rem}
One could sum up the proof below by saying Fox completion induces a category equivalence between the category of covering of $Y\setminus S$  and the category of covering of $Y$ possibly branched over $S$.
\end{rem}
\begin{proof}

 Assume $Y\setminus S$ is semi-locally simply connected. Then define $\widetilde Y^S$ the completion of spread $\widetilde {Y\setminus S} \rightarrow Y$ with $\widetilde {Y\setminus S}$ the universal covering of $Y\setminus S$. For any branched covering $X\xrightarrow{p}Y$ possibly branched over $S$, since $Y\setminus S\subset \ord(Y)$ is portly, $p^{-1}(Y\setminus S)$ is portly (hence connected) and $p^{-1}(Y\setminus S) \rightarrow Y\setminus S$ is a covering. Hence, there exists a covering morphism $\widetilde {Y\setminus S} \rightarrow p^{-1}(Y\setminus S) $. Since the completions of 
 $p^{-1}(Y\setminus S) \rightarrow Y$ and $\widetilde {Y\setminus S}\rightarrow Y$ are respectively $X$ and $\widetilde Y^S$, by Theorem \ref{theo:Fox_completion_functor} we obtain a morphism of branched covering 
 $\widetilde Y^S \rightarrow X$ which is itself a branched covering and its ordinary locus in $X$ contains $p^{-1}(Y\setminus S)$. Finally, $\widetilde Y^S \rightarrow X$ is possibly branched over $p^{-1}(S)$. 
 
 If $X\rightarrow Y$ is a maximal  covering possibly branched over $S$ then the induced covering $p^{-1}(Y\setminus S)\rightarrow Y\setminus S$  is maximal among covering of $Y\setminus S$ hence its universal covering. Therefore, $X$ is isomophic to $\widetilde Y^S$.
\end{proof}
\subsection{Paths and universal branched covering}
The abstract description of Fox completion of a spread, though very general, may benefit from a path point of view. This section is devoted to a path description of the universal cover possibly branched above some locus akin to  description as set of homotopy classes of path of the universal cover of a Hausdorff, connected, locally path connected, semi-locally simply connected topological space. 

To begin with, we introduce some notations as well as a refinement of the homotopy equivalence relation that can keep track of branched locus of a branched covering.
Considering some connected Hausdorff, locally path connected topological space $X$, some connected open subset $\U$ and some $(x,y)\in \U\times \overline\U$ we denote by $\Omega(x,y,\U)$ the set of path $\gamma:[0,1]\rightarrow \U$ such that $\gamma(0)=x$,$\gamma(1)=y$ and $\gamma(]0,1[)\subset \U$. This set will always be endowed with its compact-open topology and two paths $\gamma_1,\gamma_2$ of $\Omega(x,y,\U)$ are homotopic with respect to $\U$ if they are in the same connected component of $\Omega(x,y,\U)$ or, equivalently, if there exists an homotopy $H:[0,1]^2\rightarrow X$ with fixed ends such that $H(0,\cdot)=\gamma_1$, $H(1,\cdot)=\gamma_2$ and such that $H([0,1]\times ]0,1[)\subset \U$. The homotopic relation is not suitable for our purpose as this section as well as the following will make clear, we thus introduce a weaker equivalence relation.

\begin{defi}[Almost trivial loop] Let $X$ be a Hausdorff, locally path connected topological space, let $\U\subset X$ be an open subset and let $x\in \overline \U$.

A loop $\gamma\in \Omega(x,x,\U)$ is  almost trivial with respect to $\U$ if its connected component in $\Omega(x,x,\U)$ intersects $\Omega(x,x,\V\cap \U)$ for all neighborhood $\V$ of $x$. 
\end{defi}
\begin{example}
 Consider $Y = \RR^3$, $S_1= \RR\times\{0\}^2$ and $$S_2=\{(x,0,\sin(1/x)) ~:~ x\in \RR^*\}\cup \{(0,0)\}\times [-1,1];$$ denote $O=(0,0,0)$ the origin. The fundamental group of $Y\setminus S_i$ is isomorphic to $\ZZ$ for both $i=1$ and $i=2$ and notice that all loops of $\Omega(O,O,Y\setminus S_i)$ are almost trivial. 
 However, while a loop doing several turn around $S_1$ can be contracted to $O$, such a loop around $S_2$ cannot.  Hence, almost  trivial is indeed different from homotopically trivial.
\end{example}
\begin{defi}[Almost homotopic paths]
 Let $X$ be a Hausdorff, locally path connected topological space, let $\U\subset X$ be an open subset and let $(x,y)\in \U\times \overline \U$.

Two paths $\gamma_1,\gamma_2 \in \Omega(x,y,\U)$ are almost homotopic with respect to $\U$ if the loop $\gamma_2^{-1}*\gamma_1$ is almost trivial with respect to $\U$.
\end{defi}

\begin{rem} Implicitely, two almost homotopic paths $\gamma_1,\gamma_2$ always satisfy $\gamma_1(1)=\gamma_2(1)$ and $\gamma_1(0)=\gamma_2(0)$. 
\end{rem}

\begin{lem} \label{lem:almost_homotopic_carac}
 Let $X$ be a Hausdorff, first countable, connected, locally path connected topological space, let $\U\subset X$ be an open subset and let $x \in  \overline \U$.
 
 For $\gamma \in \Omega(x,x,\U)$,  the following are equivalent:
 \begin{enumerate}[(i)]
  \item $\gamma$ is almost trivial with respect to $\U$;
  \item for all open neighborhood $\V$ of $x$ and for all $t_0,t_1\in ]0,1[$ such that $t_0<t_1$ and $\gamma([0,t_0]\cup [t_1,1])\subset \V$, there exists a path $\omega : \gamma(t_0)\rightsquigarrow \gamma(t_1)$ in $\V\cap \U$ such that $\gamma$ is homotopic to $\gamma_{|[0,t_0]}*\omega*\gamma_{|[t_1,t_0]}$ in $\Omega(x,x,\U)$.
 \end{enumerate}
\end{lem}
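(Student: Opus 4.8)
The plan is to prove the two implications separately; writing $\simeq_\U$ for homotopy with respect to $\U$ (equality of connected components in the relevant space of paths), the direction $(ii)\Rightarrow(i)$ is soft while the content is in $(i)\Rightarrow(ii)$. For $(ii)\Rightarrow(i)$ I would take an arbitrary open neighbourhood $\V$ of $x$; since $\gamma(0)=\gamma(1)=x\in\V$ and $\gamma$ is continuous there are $0<t_0<t_1<1$ with $\gamma([0,t_0]\cup[t_1,1])\subset\V$, and $(ii)$ then produces a path $\omega$ from $\gamma(t_0)$ to $\gamma(t_1)$ in $\V\cap\U$ with $\gamma\simeq_\U\gamma_{|[0,t_0]}*\omega*\gamma_{|[t_1,1]}=:\gamma'$. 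Because $\gamma(]0,t_0])$, $\omega$ and $\gamma([t_1,1[)$ all lie in $\V\cap\U$, one has $\gamma'\in\Omega(x,x,\V\cap\U)$; as $\gamma'$ lies in the component of $\gamma$, that component meets $\Omega(x,x,\V\cap\U)$, and $\V$ being arbitrary, $\gamma$ is almost trivial.

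For $(i)\Rightarrow(ii)$ fix $\V$ and $0<t_0<t_1<1$ as in the statement (reading its right endpoint as $\gamma_{|[t_1,1]}$) and set $a=\gamma(t_0)$, $b=\gamma(t_1)$. Almost triviality applied to $\V$ gives a loop $\delta\in\Omega(x,x,\V\cap\U)$ and a homotopy $H:[0,1]^2\to X$ with $H(0,\cdot)=\gamma$, $H(1,\cdot)=\delta$, $H(s,0)=H(s,1)=x$ and $H([0,1]\times ]0,1[)\subset\U$. The naive attempt is the moving-endpoint ``rectangle'' homotopy for $H$ restricted to $[0,1]\times[t_0,t_1]$, which presents $\gamma_{|[t_0,t_1]}$ as $\simeq_\U H(\cdot,t_0)*\delta_{|[t_0,t_1]}*H(\cdot,t_1)^{-1}$, a path whose central factor already sits in $\V\cap\U$.

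The obstacle, which is the real point of the lemma, is that the two vertical edges $H(\cdot,t_0)$ and $H(\cdot,t_1)$ run inside $\U$ but may leave $\V$, and one cannot push them back toward the basepoint because $x$ need not lie in $\U$, so routing through $x$ is illegal. I would resolve this with the tube lemma: since $H([0,1]\times\{0\})=\{x\}\subset\V$ with $[0,1]$ compact, there is $\eta\in\, ]0,\min(t_0,1-t_1)[$ with $H([0,1]\times([0,\eta]\cup[1-\eta,1]))\subset\V$, and the interior of this tube, lying in $\U$ too, sits in $\V\cap\U$. Cutting the rectangle at $t=\eta$ and $t=1-\eta$ rather than at $t_0,t_1$, and joining the new vertical edges to $a$ and $b$ along the tails of $\gamma$, I set
$$\omega \;=\; (\gamma_{|[\eta,t_0]})^{-1}*H(\cdot,\eta)*\delta_{|[\eta,1-\eta]}*H(\cdot,1-\eta)^{-1}*(\gamma_{|[t_1,1-\eta]})^{-1},$$
a path from $a$ to $b$ all of whose pieces lie in $\V\cap\U$, so $\omega\in\Omega(a,b,\V\cap\U)$.

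It remains to check $\gamma\simeq_\U\gamma_{|[0,t_0]}*\omega*\gamma_{|[t_1,1]}$, which is routine path algebra carried out entirely inside $\U$: prepending $\gamma_{|[0,t_0]}$ and appending $\gamma_{|[t_1,1]}$, the spurs $\gamma_{|[0,t_0]}*(\gamma_{|[\eta,t_0]})^{-1}$ and $(\gamma_{|[t_1,1-\eta]})^{-1}*\gamma_{|[t_1,1]}$ cancel (by homotopies supported in the tails $\gamma([0,t_0]),\gamma([t_1,1])\subset\V\cap\U$) to $\gamma_{|[0,\eta]}$ and $\gamma_{|[1-\eta,1]}$, leaving $\gamma_{|[0,\eta]}*H(\cdot,\eta)*\delta_{|[\eta,1-\eta]}*H(\cdot,1-\eta)^{-1}*\gamma_{|[1-\eta,1]}$; the moving-endpoint rectangle for $H$ on the central strip $[0,1]\times[\eta,1-\eta]\subset[0,1]\times ]0,1[$ identifies the three middle factors with $\gamma_{|[\eta,1-\eta]}$ rel endpoints inside $\U$, recovering $\gamma$. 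Every intermediate loop touches $x$ only at its two ends, so the whole deformation lives in $\Omega(x,x,\U)$, as required. I expect the tube-lemma control of the vertical edges to be the only genuinely delicate step; everything else is standard homotopy bookkeeping.
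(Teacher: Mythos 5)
Your proof is correct and follows essentially the same route as the paper's: both obtain the connecting path $\omega$ by pushing a path in the parameter square of the homotopy $H$ (furnished by almost triviality) through $H$, keeping that path close enough to the boundary of the square so that its $H$-image stays in $\V$, and then verify the required homotopy by a deformation supported in the square. The only difference is implementation: where the paper takes an abstract connected neighborhood $\W$ of the compact set $B$ (the boundary of the square minus the open slit $\{0\}\times\,]t_0,t_1[$) with $H(\W)\subset\V$ and an arbitrary path $\eta$ in $\W$, you use the tube lemma to produce explicit horizontal strips and a concrete concatenation, which has the minor advantage of making the containment of $\omega$ in $\V\cap\U$ (rather than merely $\V$) completely explicit.
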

\begin{proof}
  $(ii)$ trivially implies $(i)$, we thus assume that $\gamma$ is almost trivial with respect to $\U$. Let $\V$ be an open neighborhood of $x$ and let $t_0<t_1$ such that $\gamma([0,t_0]\cup[t_1,1])\subset \V$. Let $H:[0,1]\times [0,1]\rightarrow Y$ be an homotopy from $\gamma$ to some path $\gamma_0\in \Omega(x,x
  ,\V)$ such that $\forall s\in [0,1], H(s,\cdot)\in \Omega(x,x,\U)$. 
  Consider the following closed connected subset $B$ of the boundary of $[0,1]\times [0,1]$:$$B := \{0\}\times [0,t_0]\ \cup\ [0,1]\times \{0\}\ \cup\ \{1\}\times [0,1]\ \cup\ [0,1]\times\{1\}\ \cup\ \{0\}\times [t_1,1].$$
  Since $H(B)\subset \V$, there exists a connected open neighborhood $\W$ of $B$ in $[0,1]\times [0,1]$ such that $H(\W)\subset \V$. Take some path $\eta:[t_0,t_1]\rightarrow \W$ such that $\eta(t_0)=(0,t_0)$ and $\eta(t_1)=(0,t_1)$ and define $\omega:=H\circ \eta$. 
  The homotopy $$\fonctiontrois{J}{[0,1]\times [0,1]}{Y}{s\in [0,1],t\in [0,t_0]}{H(0,t)}{s\in [0,1], t\in [t_0,t_1]}{H\left(s\cdot\eta_1(t),(1-s)t+s\cdot\eta_2(t)\right)}{s\in [0,1],t\in [0,t_0]}{H(0,t)}$$
  is such that $J(0,\cdot)=\gamma$ and $J(1,\cdot)=\gamma_{|[0,t_0]}*\omega*\gamma_{|[t_1,t_0]}$, moreover  $\forall s\in~[0,1],~J(s,\cdot)\in \Omega(x,x,\U)$. The result follows.
\end{proof}

\begin{lem} 
 Let $X$ be a Hausdorff, first countable, connected, locally path connected topological space, let $\U\subset X$ be an open subset and let $(x,y)\in \U\times \overline \U$.
 
 Then, the "almost homotopic" relation is an equivalence relation in $\Omega(x,y,\U)$.
 
\end{lem}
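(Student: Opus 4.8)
The plan is to verify the three properties of an equivalence relation for the ``almost homotopic'' relation on $\Omega(x,y,\U)$, keeping in mind that for $\gamma_1,\gamma_2\in\Omega(x,y,\U)$ the loop $\gamma_2^{-1}*\gamma_1$ is based at $y\in\overline\U$, has its interior in $\U$, and has its only junction at $x\in\U$; it is therefore a bona fide element of $\Omega(y,y,\U)$ and almost-triviality is tested there. Reflexivity amounts to proving $\gamma^{-1}*\gamma$ almost trivial. I would introduce the backtracking family $\kappa_r$, $r\in{]0,1]}$, where $\kappa_r$ runs from $y$ along $\gamma^{-1}$ up to parameter $r$ and returns; this is a continuous arc in $\Omega(y,y,\U)$ with $\kappa_1=\gamma^{-1}*\gamma$, so every $\kappa_r$ sits in the connected component of $\gamma^{-1}*\gamma$. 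Since the image of $\kappa_r$ is $\gamma([1-r,1])$, which shrinks to $\{y\}$, for any neighbourhood $\V$ of $y$ a small $r$ gives $\kappa_r\in\Omega(y,y,\V\cap\U)$; hence that component meets every $\Omega(y,y,\V\cap\U)$. For symmetry I would use that path reversal $R:\delta\mapsto(t\mapsto\delta(1-t))$ is a self-homeomorphism of $\Omega(y,y,\U)$ carrying connected components to connected components and each $\Omega(y,y,\V\cap\U)$ onto itself, together with $R(\gamma_2^{-1}*\gamma_1)=\gamma_1^{-1}*\gamma_2$; almost-triviality of one loop is then equivalent to that of the other.

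Transitivity is the heart of the matter and the step I expect to be hard. Given $\delta_1:=\gamma_2^{-1}*\gamma_1$ and $\delta_2:=\gamma_3^{-1}*\gamma_2$ almost trivial, I must show $\delta_3:=\gamma_3^{-1}*\gamma_1$ is. The naive route, writing $\delta_3$ as a homotopic version of $\delta_2*\delta_1$ after inserting $\gamma_2*\gamma_2^{-1}$, fails precisely because such a concatenation forces the interior of the loop to pass through $y$, which need not lie in $\U$; this junction-at-the-boundary obstruction is the whole difficulty. To bypass it I would argue directly from the definition: fix a neighbourhood $\V$ of $y$ and produce a single loop in the component of $\delta_3$ whose image lies in $\V$. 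Here I use that $\delta_3$ and $\delta_2$ share the initial half $\gamma_3^{-1}$, while $\delta_3$ and $\delta_1$ share the final half $\gamma_1$.

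Concretely, choosing $t_0$ near $0$ and $t_1$ near $1$ with $\gamma_3([1-2t_0,1])\cup\gamma_1([2t_1-1,1])\subset\V$, and points $q_2=\gamma_2(r_2),\,q_1=\gamma_2(r_1)$ with $r_1,r_2$ close to $1$, I would apply the characterisation (ii) of Lemma \ref{lem:almost_homotopic_carac} to $\delta_2$ and to $\delta_1$ to obtain connecting paths $\omega_2$ from $p:=\gamma_3(1-2t_0)$ to $q_2$ and $\omega_1$ from $q_1$ to $q:=\gamma_1(2t_1-1)$, both inside $\V\cap\U$, together with the associated homotopies with respect to $\U$. A short bridge $\nu$ running along $\gamma_2$ from $q_2$ to $q_1$ stays in $\V\cap\U$ since it avoids the endpoint $y$, and yields $\omega:=\omega_2*\nu*\omega_1$, a path from $p$ to $q$ in $\V\cap\U$.

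Finally I would check that $\delta_3':=\delta_3|_{[0,t_0]}*\omega*\delta_3|_{[t_1,1]}$ is homotopic to $\delta_3$ with respect to $\U$: substituting the two homotopies from Lemma \ref{lem:almost_homotopic_carac} collapses $\delta_3|_{[0,t_0]}*\omega_2$ back into $\gamma_3^{-1}$ and $\omega_1*\delta_3|_{[t_1,1]}$ back into $\gamma_1$, leaving in the middle only a back-and-forth along $\gamma_2$ of the form $\gamma_2|_{[0,r_1]}*(\gamma_2|_{[0,r_1]})^{-1}$, which cancels by backtracking inside $\U$; hence $\delta_3'\simeq\delta_3$. Since the image of $\delta_3'$ lies in $\V$ by the choices above, one has $\delta_3'\in\Omega(y,y,\V\cap\U)$ in the component of $\delta_3$; as $\V$ is arbitrary, $\delta_3$ is almost trivial and $\gamma_1\sim\gamma_3$. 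The point demanding care throughout is that every concatenation and cancellation be carried out at junctions lying in $\U$, routing near $y$ through the auxiliary path $\gamma_2$ instead of through $y$ itself.
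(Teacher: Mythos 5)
Your proof is correct and takes essentially the same route as the paper: transitivity is obtained by applying Lemma \ref{lem:almost_homotopic_carac} to the two given almost-trivial loops and rerouting the middle junction along $\gamma_2$ inside $\V\cap\U$, so that every concatenation and cancellation takes place in $\U$ and never passes through the basepoint $y$ (your only variations --- two attachment points $\gamma_2(r_1),\gamma_2(r_2)$ bridged by $\nu$ where the paper uses the single point $\gamma_2(t_2)$, and explicit backtracking/reversal arguments for reflexivity and symmetry, which the paper dismisses as clear --- are cosmetic). One shared caveat: like the paper, your substitution step treats the homotopies furnished by Lemma \ref{lem:almost_homotopic_carac} as homotopies of the middle arcs rel endpoints staying in $\U$, which is slightly stronger than the lemma's literal conclusion (a homotopy of the full loops in $\Omega(y,y,\U)$ does not by itself yield this, since cancelling the common end segments would force intermediate loops through $y$), but it is exactly what the homotopy $J$ built in that lemma's proof provides, so your argument is at the same level of rigor as the paper's.
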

\begin{proof}
The relation is clearly reflexive and symmetric, we shall then prove that it is also transitive. Let $\gamma_1, \gamma_2,\gamma_3 \in \Omega(x,y,\U)$ such that $\gamma_1,\gamma_2$ are almost homotopic with respect to $\U$ and such that
$\gamma_2,\gamma_3$ are almost homotopic with respect to $\U$. Let $\V$ be an open neighborhood of $x$ and let $t \in ]0,1[^3$ such that $\forall i\in\{1,2,3\},\ \gamma_i([t_i,1])\subset \V$. By Lemma \ref{lem:almost_homotopic_carac}, there exists a path $\omega_{1}$ (resp. $\omega_2$)  in $\V\cap \U$ from $\gamma_1(t_1)$ to $\gamma_2(t_2)$ (resp. from $\gamma_2(t_2)$ to $\gamma_3(t_3)$) such that $\gamma_1^{-1}*\gamma_2$ is homotopic to $(\gamma_{1|[t_0,1]})^{-1}*\omega_1*\gamma_{2|[t_1,1]}$ (resp. such that $\gamma_2^{-1}*\gamma_1$ is homotopic to $(\gamma_{2|[t_1,1]})^{-1}*\omega_2*\gamma_{3|[t_3,1]}$ ). Then $(\gamma_{1|[0,t_0]})^{-1}*\gamma_{2|[0,t_2]}*\omega_1^{-1}$ is homotopically trivial and the loop
 $\gamma_{1}^{-1}*\gamma_3$ is homotopic to $$(\gamma_{1|[t_0,1]})^{-1}*\left((\gamma_{1|[0,t_0]})^{-1}*\gamma_{2|[0,t_2]}*\omega_1^{-1}\right)*\omega_1*(\gamma_{2|[0,t_2[})^{-1}*\gamma_3$$ which is thus homotopic to $(\gamma_{1|[t_0,1]})^{-1}*\omega_1*(\gamma_{2|[0,t_2[})^{-1}*\gamma_3$. The same way, the loop $(\gamma_{2|[0,t_1]})^{-1}*\gamma_{3|[0,t_3]}*\omega_2^{-1}$ is homotopically trivial and $\gamma_{1}^{-1}*\gamma_3$ is then homotopic to $(\gamma_{1|[t_0,1]})^{-1}*\omega_1*\omega_3*\gamma_{3|[t_3,1]} \in \Omega(x,x,\V\cap \U)$. 
 Finally, $\gamma_{1}^{-1}*\gamma_3$ is almost trivial with respect to $\U$ and $\gamma_1$ is almost homotopic to $\gamma_3$.
\end{proof}

\begin{lem} \label{lem:almost_homotopic_carac2}
 Let $\widetilde Y^S\xrightarrow{p} Y$ be a universal covering possibly ramified above $S$, let $(x,y)\in (Y\setminus S)\times Y$ and let $\hat x \in p^{-1}(x)$.
Let $\gamma_1, \gamma_2\in \Omega(x,y,Y\setminus S)$ and let $\hat \gamma_1,\hat\gamma_2$ be the respective lifts of $\gamma_1$ and $\gamma_2$ such that $\hat \gamma_1(0)=\hat \gamma_2(0)=\hat x$. 

Then, $\hat\gamma_1(1)=\hat\gamma_2(1)$ if and only if $\gamma_1$ and $\gamma_2$ are almost homotopic.
\end{lem}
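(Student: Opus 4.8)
The plan is to reduce the statement to the path description of the universal covering of $Y\setminus S$ together with the projective-limit description of the fibre of a complete spread, the bridge between the two being the fact that the regular part is portly in $\widetilde Y^S$.

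First I would set up the lifts. Since $S$ is skeletal, $Y\setminus S$ is portly in $Y$ and $Y\setminus S\subset \ord(Y)$, so the restriction of $p$ over $Y\setminus S$ is the universal covering $\widetilde{Y\setminus S}\to Y\setminus S$ and $\widetilde{Y\setminus S}=p^{-1}(Y\setminus S)$ is portly in $\widetilde Y^S$. Each $\gamma_i\in\Omega(x,y,Y\setminus S)$ restricts on $[0,1[$ to a path in $Y\setminus S$, which lifts uniquely from $\hat x$ into $\widetilde{Y\setminus S}$, and by completeness (Lemma~\ref{lem:complete_path_carac}) this partial lift extends to a continuous $\hat\gamma_i:[0,1]\to\widetilde Y^S$; this is the lift in the statement. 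By Proposition~\ref{prop:spread_fiber}, a point of the fibre $p^{-1}(y)$ is determined by the family $\V\mapsto$ (its connected component in $p^{-1}(\V)$), where $\V$ runs over a connected neighbourhood basis of $y$. Writing $\hat\V_i$ for the component of $p^{-1}(\V)$ containing $\hat\gamma_i(1)$, equivalently, by continuity, containing the tail $\hat\gamma_i([s,1[)$ for $s$ close to $1$, I obtain
$$\hat\gamma_1(1)=\hat\gamma_2(1)\iff \hat\V_1=\hat\V_2\ \text{ for every }\V.$$

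The next step is to pass from components of $p^{-1}(\V)$ to components of the covering $p^{-1}(\V\setminus S)\to \V\setminus S$. Since $\widetilde{Y\setminus S}$ is portly in $\widetilde Y^S$, its trace on the open set $p^{-1}(\V)$ is portly, and because it is locally connected in $\widetilde Y^S$ it meets each connected component of $p^{-1}(\V)$ (a connected open set) in a single connected, nonempty piece; this yields a bijection between the components of $p^{-1}(\V)$ and those of $p^{-1}(\V\setminus S)$ (note $\V\setminus S$ is connected because $Y\setminus S$ is portly). Hence $\hat\V_1=\hat\V_2$ is equivalent to $\hat\gamma_1(s_1)$ and $\hat\gamma_2(s_2)$ lying in the same component of $p^{-1}(\V\setminus S)$ for suitable $s_1,s_2$ near $1$ (those with $\gamma_i([s_i,1[)\subset\V\setminus S$). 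As $p^{-1}(\V\setminus S)\to\V\setminus S$ is a covering, this happens iff there is a path $\omega$ in $\V\setminus S$ from $\gamma_1(s_1)$ to $\gamma_2(s_2)$ whose lift from $\hat\gamma_1(s_1)$ ends at $\hat\gamma_2(s_2)$; concatenating lifts and using that $\widetilde{Y\setminus S}$ is the \emph{universal} covering, this is in turn equivalent to the homotopy rel.\ endpoints, inside $Y\setminus S$,
$$\gamma_{1|[0,s_1]}*\omega\ \simeq\ \gamma_{2|[0,s_2]}.$$

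Finally I would match this with almost homotopy. The loop $\gamma_2^{-1}*\gamma_1$ is based at $y$ and runs $y\rightsquigarrow x\rightsquigarrow y$, with interior in $Y\setminus S$. Feeding the homotopy above back into $\gamma_2^{-1}*\gamma_1$ while keeping the tails $(\gamma_{2|[s_2,1]})^{-1}$ and $\gamma_{1|[s_1,1]}$ fixed contracts the central portion and shows that its class in $\Omega(y,y,Y\setminus S)$ meets $\Omega(y,y,\V\cap(Y\setminus S))$; as $\V$ ranges over all neighbourhoods of $y$, this is by definition almost triviality, i.e.\ $\gamma_1$ and $\gamma_2$ are almost homotopic. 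For the converse I would invoke the implication $(i)\Rightarrow(ii)$ of Lemma~\ref{lem:almost_homotopic_carac} to produce, for each $\V$, precisely such an $\omega$ and homotopy, and then read the chain of equivalences backwards to recover $\hat\V_1=\hat\V_2$ for all $\V$. I expect the main obstacle to be exactly this last translation: one must check that the single cut-and-replace instance supplied for each neighbourhood $\V$ genuinely realises the definition of almost triviality (the choice of $s_1,s_2$, the base point remaining at $y$, and the homotopy staying inside $Y\setminus S$), and that the component bijection of the middle step is applied only to honestly connected open sets.
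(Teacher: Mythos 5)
Your reduction chain is sound as far as it goes: Proposition~\ref{prop:spread_fiber} does reduce equality of the lift endpoints to equality, for every connected neighbourhood $\V$ of $y$, of the components of $p^{-1}(\V)$ containing the tails; the portliness argument does give a bijection between components of $p^{-1}(\V)$ and of $p^{-1}(\V\setminus S)$; and the universal covering property of $p^{-1}(Y\setminus S)\to Y\setminus S$ correctly translates the latter into the existence of a connector $\omega_\V$ in $\V\setminus S$ with $\gamma_{1|[0,s_1]}*\omega_\V\simeq\gamma_{2|[0,s_2]}$ \emph{rel endpoints inside} $Y\setminus S$. From there, your ``feed the homotopy back in, keeping the tails fixed'' argument correctly proves one direction: same lift endpoint $\Rightarrow$ almost homotopic. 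This half is essentially the paper's own proof of that direction (the paper builds $\omega$ upstairs in $\U\cap p^{-1}(Y\setminus S)$ and uses simple connectedness of $p^{-1}(Y\setminus S)$; same content, different packaging).

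The gap is in the converse, exactly at the step you flagged but diagnosed incorrectly. Lemma~\ref{lem:almost_homotopic_carac}$(i)\Rightarrow(ii)$ produces a homotopy of $\gamma_2^{-1}*\gamma_1$ to $(\gamma_{2|[s_2,1]})^{-1}*\omega*\gamma_{1|[s_1,1]}$ \emph{through} $\Omega(y,y,Y\setminus S)$, i.e.\ through loops pinned at $y$ whose intermediate stages all pass through $y$ --- and $y$ may lie in $S$. This is strictly weaker than what the last link of your chain consumes, namely a homotopy rel endpoints \emph{inside} $Y\setminus S$ between $\gamma_{1|[0,s_1]}*\omega^{-1}$ and $\gamma_{2|[0,s_2]}$: you cannot cancel the tails $(\gamma_{2|[s_2,1]})^{-1}$ and $\gamma_{1|[s_1,1]}$ to pass from the pinned statement to the rel-endpoints one, because that cancellation takes place in the fundamental groupoid of $Y\setminus S$, while the pinned homotopy is not a homotopy in $Y\setminus S$ at all (homotopy classes in $\Omega(y,y,Y\setminus S)$ carry no group structure when $y\in S$; the paper's $\sin(1/x)$ example is there precisely to show this distinction is real). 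So ``read the chain of equivalences backwards'' is unjustified --- indeed the implication you would need is equivalent to the direction of the lemma you are trying to prove. The missing ingredient is the paper's lifting argument: a homotopy through $\Omega(y,y,Y\setminus S)$ lifts to $\widetilde Y^S$ by completeness (Lemma~\ref{lem:complete_path_carac}), and since the fiber $p^{-1}(y)$ is totally disconnected (Corollary~\ref{cor:totdiscont}), the endpoint of the lift cannot move during the homotopy; combined with the observation that the lift of $\omega*\gamma_{1|[s_1,1]}$ from $\hat\gamma_2(s_2)$ stays in the closure of the standard neighbourhood $\U$, and Hausdorffness of $\widetilde Y^S$, this is what actually yields $\hat\gamma_1(1)=\hat\gamma_2(1)$ from almost triviality. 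Your proof needs this step added; it cannot be extracted from Lemma~\ref{lem:almost_homotopic_carac} alone.
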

\begin{proof}
 Assume $\gamma_1$ and $\gamma_2$ are almost homotopic. For  $\U$ some standard neighborhood of $\hat\gamma_1(2)$. By Lemma \ref{lem:spread_connected_image}, $p(\U)$ is a connected open neighborhood of $y$, and since $\gamma_1$ and $\gamma_2$ are almost homotopic, $\gamma_1$ is homotopic to a path $\gamma_1'$ of the form $\gamma_{2|[0,t_2]}*\omega*\gamma_{1|[t_1,1]}$ with $t_1\in ]0,1[$
 such that $\gamma_i([t_i,1])\subset p(\U)$ for $i\in \{1,2\}$ and $\omega\in  \Omega(\gamma_2(t_2),\gamma_1(t_1),Y\setminus S)$. 
 Since $\gamma_1$ and $\gamma_1'$ are homotopic and $p^{-1}(y)$ is totally disconnected, then $\gamma_1(1)=\gamma_1'(1)$. Since $\omega*\gamma_{1|[t_1,1]}\subset p(\U)$, the lift $\eta$ of $\omega*\gamma_{1|[t_1,1]}$ such that $\eta(0)=\gamma_2(t_2)$ stays in $\overline \U$. In particular $\hat\gamma_1(1)=\eta(1)\in \overline \U$. Therefore, $\hat\gamma_1(1)$ is in the closure of all neighborhood of $\hat\gamma_2(1)$, since $\widetilde Y^S$ is Hausdorff we conclude that $\hat\gamma_1(1)=\hat\gamma_2(1)$.
 
 Assume $\hat\gamma_1(1)=\hat\gamma_2(1)$. Let $\U\subset \widetilde Y^S$ be some standard neighborhood of  $\hat\gamma_1(1)$, let $t_1,t_2\in ]0,1[$ such that $\hat\gamma_i([t_i,1])\subset \U$ for $i\in \{1,2\}$ and let $\omega \in \Omega(\hat\gamma_1(t_1),\hat\gamma_2(t_2),\U\cap p^{-1}(Y\setminus S))$. Since $p^{-1}(Y\setminus S)$ is simply connected, the loop $\hat\gamma_{1|[0,t_1]} *\omega * (\hat\gamma_{2|[0,t_2]})^{-1}$ is homotopically trivial. Therefore, $\hat\gamma_2$ is homotopic to $\hat\gamma_{1|[0,t_1]}*\omega*\hat\gamma_{2|[t_2,1]}$, hence $\gamma_2$ is homotopic to $\gamma_{1|[0,t_1]}*(p\circ\omega)*\gamma_{2|[t_2,1]}$. Finally, the loop $(\gamma_1)^{-1}*\gamma_2$ is homotopic to  $(\gamma_1)^{-1}*\gamma_{1|[0,t_1]}*(p\circ\omega)*\gamma_{2|[t_2,1]}$ which is homotopic to $(\gamma_{1|[t_1,1]})^{-1}*(p\circ \omega)*\gamma_{2|[t_2,1]} \subset \U$. 
The loop $(\gamma_1)^{-1}*\gamma_2$ is thus almost trivial and $\gamma_1$ is almost homotopic to $\gamma_2$. 
\end{proof}

\begin{prop} \label{prop:ramified_covering_path} Let $Y$ be connected, locally path connected Hausdorff topological space and let $S\subset Y$ be a skeletal subset. 
Let $y_0\in Y\setminus S$ and let $\Omega:=\Omega(y_0,Y\setminus S)$. Define $X$ as the set of almost homotopy classes of $\Omega$ with respect to $Y\setminus S$ and define the map $\varphi:\Omega\rightarrow Y, \gamma \mapsto \gamma(1)$.

Then, $\varphi$ induces a branched covering $X \xrightarrow{~~\overline\varphi~~ } Y$  isomorphic to $\widetilde Y^S \xrightarrow{~~p~~} Y$.
 
\end{prop}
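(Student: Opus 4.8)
The plan is to transport the classical \emph{``homotopy classes of paths''} model of the universal cover to the branched setting, reading everything off the path-lifting behaviour of $\widetilde Y^S$. Throughout I assume, as is implicit in the very mention of $\widetilde Y^S$, that $Y\setminus S$ is semi-locally simply connected, so that $\widetilde Y^S\xrightarrow{p}Y$ is the Fox completion of the universal covering $\widetilde{Y\setminus S}\to Y$ supplied by Proposition~\ref{prop:universality}. I will use repeatedly that $p^{-1}(Y\setminus S)=\widetilde{Y\setminus S}$ is simply connected, portly in $\widetilde Y^S$, and that $p^{-1}(Y\setminus S)\to Y\setminus S$ is a genuine covering; since $\widetilde Y^S\to Y$ is a branched covering, Lemma~\ref{lem:complete_path_carac} applies with $X_1=\widetilde{Y\setminus S}$ and $Y_1=Y\setminus S$.

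First I would fix a basepoint $\hat y_0\in p^{-1}(y_0)$ and build an endpoint map $E:\Omega\rightarrow\widetilde Y^S$. Given $\gamma\in\Omega=\Omega(y_0,Y\setminus S)$, the restriction $\gamma_{|[0,1)}$ takes values in $Y\setminus S$ and therefore lifts uniquely to $\hat\gamma:[0,1)\rightarrow\widetilde{Y\setminus S}$ with $\hat\gamma(0)=\hat y_0$, uniqueness coming from the covering $p^{-1}(Y\setminus S)\to Y\setminus S$. By the branched-covering lifting property, Lemma~\ref{lem:complete_path_carac}$(iv)$, this partial lift extends continuously to $t=1$, and the extension is unique because $\widetilde Y^S$ is Hausdorff; I set $E(\gamma):=\hat\gamma(1)$, so that $p\circ E=\varphi$ by construction.

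Next I would descend $E$ to $X$ and establish that the descended map is a bijection. Since almost homotopic paths share both endpoints, $\varphi$ descends to $\overline\varphi:X\rightarrow Y$; the core point is that $E$ descends to an \emph{injective} map on almost homotopy classes, and here both well-definedness and injectivity are exactly Lemma~\ref{lem:almost_homotopic_carac2}: for $\gamma_1,\gamma_2\in\Omega$ with $\gamma_1(1)=\gamma_2(1)$ — a condition that is automatic once $E(\gamma_1)=E(\gamma_2)$, as $p\circ E=\varphi$ — one has $E(\gamma_1)=E(\gamma_2)$ if and only if $\gamma_1,\gamma_2$ are almost homotopic. This yields an injection $\Psi:X\rightarrow\widetilde Y^S$ with $p\circ\Psi=\overline\varphi$. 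Surjectivity follows from portliness: for any $\hat z\in\widetilde Y^S$, Lemma~\ref{lem:path_covering} applied to the portly subset $p^{-1}(Y\setminus S)$ produces a path $\hat\gamma$ from $\hat y_0$ to $\hat z$ with $\hat\gamma([0,1))\subset p^{-1}(Y\setminus S)$; then $\gamma:=p\circ\hat\gamma\in\Omega$ and $\Psi([\gamma])=\hat z$. Hence $\Psi$ is a bijection over $Y$.

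Finally I would promote this set bijection to a spread isomorphism. Equipping $X$ with its natural path topology — the one generated by the sets of almost homotopy classes of concatenations $\gamma*\delta$, where $\delta$ ranges over short paths in a connected open $\U\ni\gamma(1)$ meeting $S$ only at its endpoint — the pair $(\mathrm{id}_Y,\Psi)$ becomes an isomorphism of spreads, so $X\xrightarrow{\overline\varphi}Y$ is a branched covering isomorphic to $\widetilde Y^S\xrightarrow{p}Y$. The step I expect to be the main obstacle is precisely matching this intrinsic topology with the one transported from $\widetilde Y^S$ along $\Psi$: one must verify that such a basic neighborhood of $[\gamma]$ corresponds, under $\Psi$, to the connected component of $p^{-1}(\U)$ through $\Psi([\gamma])$. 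This is again governed by Lemma~\ref{lem:almost_homotopic_carac2}, together with Lemma~\ref{lem:spread_connected_image} (ensuring $p(\V)=\U$ on each component), and it is the only place where genuine care about the compact-open topology and about the behaviour of lifts near the branching locus is needed.
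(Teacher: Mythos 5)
Your set-level work is correct and follows the same route as the paper: the endpoint map $E$ (the paper's $\psi$), its descent to $X$ and injectivity via Lemma~\ref{lem:almost_homotopic_carac2}, and surjectivity via Lemma~\ref{lem:path_covering} applied to the portly subset $p^{-1}(Y\setminus S)$ are exactly the first third of the paper's proof. The genuine gap is your final paragraph. Under the paper's standing convention, $\Omega$ carries the compact-open topology, so the $X$ of the statement is the corresponding quotient space; the substance of the proposition is that the endpoint bijection is a homeomorphism \emph{for that topology}, and this is what occupies most of the paper's proof. There, continuity of $\psi$ is proved by covering $\gamma([0,t_0])$ with finitely many open sets whose loops are trivial in $Y\setminus S$ (semi-local simple connectedness plus compactness) so as to produce an explicit compact-open neighborhood $\mathcal O$ of $\gamma$ with $\psi(\mathcal O)\subset \U$, and openness of $\psi$ is proved by a case analysis of the image of the subbasic sets $\{\gamma\in\Omega:\gamma(K)\subset\U\}$. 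You instead equip $X$ with a topology declared by fiat and explicitly defer the comparison with the topology transported from $\widetilde Y^S$; that deferred step is not a routine consequence of Lemma~\ref{lem:almost_homotopic_carac2} and Lemma~\ref{lem:spread_connected_image} --- it is the actual content of the proposition, and it is missing.

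Moreover, your proposed basis is ill-defined precisely at the points that matter. If $\gamma(1)\in S$ --- the case of a branch point --- then for \emph{any} path $\delta$ starting at $\gamma(1)$ the concatenation $\gamma*\delta$ passes through $\gamma(1)\in S$ at an interior time, hence lies outside $\Omega=\Omega(y_0,Y\setminus S)$ and has no almost homotopy class in $X$; and on the alternative reading where $\delta$ ``meets $S$ only at its endpoint'', the definition silently applies only when $\gamma(1)\notin S$, i.e.\ away from the branching locus. Either way, no neighborhoods of the classes over branch points are defined. A usable basis must consist of classes of paths of the form $\gamma_{|[0,t_0]}*\omega$ with $\gamma([t_0,1])\subset\U$ and $\omega$ a path in $\U$ satisfying $\omega(]0,1[)\subset\U\setminus S$; with that repair one would still have to verify both that $\Psi$ carries such a set onto the connected component of $p^{-1}(\U)$ through $\Psi([\gamma])$ and that the resulting topology coincides with the compact-open quotient to which the statement refers.
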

\begin{proof}

Let $\hat y_0$ be some lift of $y_0$ in $\widetilde Y^S$.
Consider the function $\psi : \Omega\rightarrow \widetilde Y^S$ which associate to $\gamma\in \Omega$ the point $\hat\gamma(1)$ where $\hat\gamma$ is the unique lift of $\gamma$ in $\widetilde Y^S$ such that $\hat\gamma(0)=\hat y_0$.
By Lemma \ref{lem:almost_homotopic_carac2}, $\psi$ induces an injective function $\overline \psi:X\rightarrow \widetilde Y^S$,
the following diagram commutes
$$\xymatrix{X\ar[d]^{\overline \varphi}\ar[r]^{\overline\psi}& \widetilde Y^S \ar[d]^p \\ Y\ar@{=}[r]&Y}$$
and it suffices to prove that $\overline \psi$ is an homeomorphism.

\begin{itemize}
 \item Since $\widetilde Y^S$ is first countable and $p^{-1}(Y\setminus S)$ is portly, for all $x\in \widetilde Y^S$ there exists a path $\hat\gamma\in \Omega(\hat y_0,x,p^{-1}(Y\setminus S))$ and, for such a path,  $\psi(p\circ \hat\gamma)=\hat\gamma(1)=x$. The map $\psi$ is thus surjective, hence bijective.
 
 \item Let $\U$ be a standard neighborhood of some $x\in \widetilde Y^S$ and consider some $\gamma\in \Omega$ such that $\psi(\gamma)=x$. Take some $t_0\in ]0,1[$ such that $\gamma([t_0,1])\subset p(\U)$.
 Since $Y\setminus S$ is semi-locally simply connected, for all $t\in [0,t_0]$ there exists some path connected open neighborhood $\V\subset Y\setminus S$ of $\gamma(t)$ such that all loop in $\V$ are homotopically trivial in $Y\setminus S$. By compactness, we can choose finitely many such neighborhoods $(\V_i)_{i\in\lsem 1,n \rsem}$ as well as an increasing sequence $\alpha\in [0,t_0]^{\lsem 0,n\rsem}$ such that :
 $\alpha_0=0, \alpha_n=t_0$ and $\forall i\in \lsem 1,n\rsem,\ \gamma([\alpha_{i-1},\alpha_i]) \subset \V_i$. 
 Define $\V_{n+1}:=\U$ and $\W_i$ the connected component of $\gamma(\alpha_i)$ in $\V_i\cap\V_{i+1}$ for $i\in \lsem 1,n\rsem$ and define 
 $$\mathcal O := \left\{\eta\in \Omega ~\left|~\begin{array}{l}
                                                     \forall i\in \lsem 1,n\rsem, \eta([\alpha_{i-1},\alpha_i]\subset \V_i)  \\ \forall i\in \lsem 1,n\rsem,\ \eta(\alpha_i)\in\W_i \\ 
                                                     \eta([t_0,1])\subset p(\U)
                                                      \end{array} \right.\right\}.$$
$\mathcal O$ is an open neighborhood of $\gamma$ for the compact-open topology of $\Omega$. Let $\eta\in \mathcal O$, forall $i\in \lsem 1,n\rsem $, choose a path  $\omega_i \in 
\Omega(\gamma(\alpha_i),\eta(\alpha_i),\W_i)$. We also define $\omega_0$ as the 
constant path at $y_0$. We notice that for all $i\in \lsem 1,n,\rsem$, the loop $
\gamma_{|[\alpha_{i-1},\alpha_i]}*\omega_i*(\eta_{|[\alpha_{i-1},\alpha_i]})^{-1}*\omega_{i-1} $ is homotopically trivial. Therefore, $\eta$ is homotopic to the loop 
$\eta':=\gamma_{|[0,t_0]}*\omega_n*\eta_{|[t_0,1]}$. Consider the lifts $\hat\gamma$
and $\hat\eta'$ of $\gamma$ and $\eta'$ respectively, both starting at $\hat y_0$. They are equal on $[0,t_0]$; since $\gamma$ and $\eta$ both stay in $p(\U)$, the paths $\hat\gamma$ and $\hat\eta'$ stay in the same connected component of $p^{-1}(p(\U))$, hence they both stay in $\U$. We then deduce that $\psi(\eta)=\hat\eta'(1)\in \U$ thus $\psi(\mathcal O)\subset \U$.
Finally, $\psi$ is continuous, then so is $\overline \psi$.

\item Let $\U\subset Y$ open and $K\subset [0,1]$ compact, define 
$\mathcal O := \{ \gamma\in \Omega~|~ \gamma(K)\subset \U \}$. Four cases can occur:
\begin{enumerate}
 \item If $0\in K$ and $y_0 \notin \U $, then $\psi(\mathcal O)= \emptyset$;
 \item If $0\notin K$ or $y_0 \in \U$;
    \begin{enumerate}
     \item If $1 \notin K$, then $\psi(\mathcal O)=\widetilde Y^S$;

     \item If $1\in K\neq [0,1]$ , then $\psi(\mathcal O)= p^{-1}(\U)$;
     \item If $K = [0,1]$ then $\psi(\mathcal O)$ is the connected component of $\hat y_0$ in $p^{-1}(\U)$.
    \end{enumerate}
\end{enumerate}
Case $(1)$ is trivial. In case $(2.a)$, consider $x\in \widetilde Y^S$, $\gamma\in \Omega$ such that $\psi(\gamma)=x$. Take some $t_0\in ]0,1[$ such that $[t_0,1]\cap K = \emptyset$ and consider the path $\eta(t)=y_0$ if $t\leq t_0$ and 
$\eta(t)=\gamma(\frac{t-t_0}{1-t_0})$ if $t\in [t_0,1]$. We indeed have $\psi(\eta)=\psi(\gamma)=x$. A similar argument shows case $(2.b)$. 
Then, in case $(2.c)$, we have $y_0\in \U$ and the connected component $\hat\U$ of $\hat y_0$ in $p^{-1}(\U)$  is path connected. For $x\in \hat\U$, there thus exists some path $\hat\gamma \in \Omega(\hat y_0,x,\hat \U)$ and, for such a path, $\psi(p\circ \gamma)=x$. Furthermore $p\circ \gamma\subset \U$ thus $p\circ \gamma\in \mathcal O$. Therefore, $\hat \U \subset \psi(\mathcal O)$. Certainly, $\psi(\mathcal O)\subset p^{-1}(\U)$ and since $\hat\gamma$ is connected and $\hat\gamma(0)=\hat y_0\in \hat\U$, then $\psi(\mathcal O)\subset \hat \U$.
Finally, $\psi(\mathcal O)=\hat \U$, the connected component of $\hat y_0$ in $p^{-1}(\U)$.

We deduce from the previous analysis that $\psi$ is open, hence $\overline \psi$ is open.
\end{itemize}
Together, the points above show that $\overline \psi$ is an homeomorphism.
\end{proof}

We end this section with two corrolaries of Proposition \ref{prop:spread_fiber}, the proofs of which are left to the reader, and two examples illustrating possible behaviors of the fiber above a branching point. 
\begin{defi} 
 Let $X\xrightarrow{p} Y$ be a  covering branched over $S\subset Y$. 
 Given two paths $\gamma_1,\gamma_2$ with fixed start point $a\in Y\setminus S$ and free end point in an open connected subset $\U\subset Y\setminus S$; $\gamma_1$ and $\gamma_2$ are $p$-homotopic if their lifts, from the same start point, end in the same connected component of $p^{-1}(\U)$.
\end{defi}

  \begin{prop} \label{prop:general_fiber_path}
  Let $X\xrightarrow{p} Y$ be a  covering branched over $S\subset Y$. Let $a\in Y\setminus S$ and let $\U\subset Y$ be open and connected.  Define
$\pi_1(a,p,\U)$ the set of paths with fixed start at $a$ and free end in $\U$ up to $p$-homotopy.

Then, for all $b\in Y$ and all connected neighborhood basis $\boldsymbol \U$ of $b$ we have 
  $$ p^{-1}(b) \simeq \varprojlim_{\U\in \boldsymbol\U} \pi_1(a,p,\U)$$
  where the natural bonding maps $\pi_1(a,p,\U)\rightarrow \pi_1(a,p,\V)$  for $\U\subset \V$.
\end{prop}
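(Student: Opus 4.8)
The plan is to reduce the statement to Proposition~\ref{prop:spread_fiber} by identifying, for each connected open $\U$, the set $\pi_1(a,p,\U)$ with the set $X_\U$ of connected components of $p^{-1}(\U)$, compatibly with the bonding maps. First I would fix once and for all a lift $\hat a\in p^{-1}(a)$, relative to which all path lifts are taken. For a connected open $\U\subset Y$ I define a map $\Phi_\U:\pi_1(a,p,\U)\rightarrow X_\U$ sending the $p$-homotopy class of a path $\gamma$ to the connected component of $p^{-1}(\U)$ containing the endpoint $\hat\gamma(1)$ of the lift $\hat\gamma$ of $\gamma$ starting at $\hat a$; the lift is well defined because on $[0,1[$ it lies in $p^{-1}(Y\setminus S)$, where $p$ restricts to a covering, and it extends to $[0,1]$ by completeness via Lemma~\ref{lem:complete_path_carac}. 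By the very definition of $p$-homotopy, $\Phi_\U$ is both well defined and injective.

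The first substantial step is the surjectivity of $\Phi_\U$. Given a connected component $\hat\U$ of $p^{-1}(\U)$, I would pick $\hat x\in\hat\U$ and join $\hat a$ to $\hat x$ inside $X$. Here I use that $S$ is skeletal, so $Y\setminus S$ is portly and hence $p^{-1}(Y\setminus S)$ is portly in $X$ (by the corollary to Lemma~\ref{lem:branched_covering_restriction}); since moreover $X$ is connected and locally path connected, Lemma~\ref{lem:path_covering} yields a path $\hat\gamma$ from $\hat a$ to $\hat x$ with $\hat\gamma([0,1[)\subset p^{-1}(Y\setminus S)$. Projecting, $\gamma:=p\circ\hat\gamma$ is a path from $a$ with $\gamma([0,1[)\subset Y\setminus S$ and $\gamma(1)=p(\hat x)\in\U$, whose lift from $\hat a$ is exactly $\hat\gamma$; its endpoint lies in $\hat\U$, so $\Phi_\U([\gamma])=\hat\U$. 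This shows $\Phi_\U$ is a bijection.

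Next I would check compatibility with the transition maps. For $\U\subset\V$ the bonding map $\pi_1(a,p,\U)\rightarrow\pi_1(a,p,\V)$ is induced by reinterpreting a path with end in $\U$ as a path with end in $\V$, while the bonding map $\pi_{\U\V}:X_\U\rightarrow X_\V$ of Proposition~\ref{prop:spread_fiber} sends a component to the component of $p^{-1}(\V)$ containing it; since the lift endpoint $\hat\gamma(1)$ is unchanged, the square relating $\Phi_\U$ and $\Phi_\V$ commutes. The maps $\Phi_\U$ being bijections between the (discrete) sets $\pi_1(a,p,\U)$ and $X_\U$ commuting with all bonding maps, they assemble into a homeomorphism $\varprojlim_{\U\in\boldsymbol\U}\pi_1(a,p,\U)\simeq\varprojlim_{\U\in\boldsymbol\U}X_\U$ of projective limits. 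Composing with the homeomorphism $p^{-1}(b)\simeq\varprojlim_{\U\in\boldsymbol\U}X_\U$ of Proposition~\ref{prop:spread_fiber} yields the claim.

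I expect the main obstacle to be the surjectivity step, and more precisely the verification that the projected path $\gamma$ admits $\hat\gamma$ as its lift from $\hat a$: one must argue that this lift is forced on $[0,1[$ by unique path lifting for the covering $p^{-1}(Y\setminus S)\rightarrow Y\setminus S$ and then pinned down at $1$ by completeness, so that $\Phi_\U([\gamma])$ is genuinely the component $\hat\U$ one started from rather than another component of $p^{-1}(\U)$. The remaining verifications (well-definedness, injectivity, and compatibility with the bonding maps) are immediate from the definition of $p$-homotopy and the description of the bonding maps in Proposition~\ref{prop:spread_fiber}.
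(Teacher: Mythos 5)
Your proof is correct and follows exactly the route the paper intends: the paper states this result as a corollary of Proposition~\ref{prop:spread_fiber} with the proof left to the reader, and your identification of $\pi_1(a,p,\U)$ with the set $X_\U$ of connected components of $p^{-1}(\U)$ (well defined and injective by the definition of $p$-homotopy, surjective via Lemma~\ref{lem:path_covering} applied to the portly subset $p^{-1}(Y\setminus S)$, and compatible with the bonding maps) is precisely the missing reduction.
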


\begin{prop} \label{prop:general_fiber_path_universal}
 Let $Y$ be a Hausdorff connected locally path connected topological space and let $S\subset Y$ skeletal such  that $Y\setminus S$ is semi-locally simply connected. Let $a\in Y\setminus S$ and let $\U\subset Y$ be open and connected.   Denote $\widetilde Y^S\xrightarrow{p} Y$ the universal cover of $Y$ possibly branched over~$S$. Define
$\pi_1(a,\U)$ the set of paths with fixed start at $a$ and free end in $\U$ up to homotopy.

Then, for all $b\in Y$ and all connected neighborhood basis $\boldsymbol \U$ of $b$ we have 
  $$ p^{-1}(b) \simeq \varprojlim_{\U\in \boldsymbol\U} \pi_1(a,\U)$$
  where the natural bonding maps $\pi_1(a,\U)\rightarrow \pi_1(a,\V)$  for $\U\subset \V$.
\end{prop}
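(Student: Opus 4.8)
The plan is to deduce the statement from Proposition \ref{prop:general_fiber_path} applied to the universal branched covering $p:\widetilde Y^S\to Y$, whose ordinary locus is $\ord(\widetilde Y^S)=p^{-1}(Y\setminus S)$, the \emph{simply connected} universal covering of $Y\setminus S$ by the construction in Proposition \ref{prop:universality}. For any connected open $\U\subset Y$, two paths that are homotopic (with fixed start $a$ and free end in $\U$) have, by the homotopy lifting property of Lemma \ref{lem:complete_path_carac}, lifts ending in the same connected component of $p^{-1}(\U)$; hence there is a canonical surjection $\pi_1(a,\U)\twoheadrightarrow\pi_1(a,p,\U)$ refining homotopy to $p$-homotopy, and it is visibly compatible with the bonding maps for $\U\subset\V$. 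It therefore suffices to prove this surjection is injective, for then $\varprojlim_{\U}\pi_1(a,\U)\simeq\varprojlim_{\U}\pi_1(a,p,\U)\simeq p^{-1}(b)$.

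To prove injectivity I would first reduce to endpoints in $Y\setminus S$. Since $S$ is skeletal, $\U\setminus S$ is portly, hence connected, in $\U$, so sliding the free endpoint within $\U$ homotopes any path to one ending in $\U\setminus S$ without altering either its homotopy class or its $p$-homotopy class. Moreover $p^{-1}(\U\setminus S)$ is portly in $p^{-1}(\U)$: this follows from the local-homeomorphism property of portly subsets applied to the covering $\ord(\widetilde Y^S)\to\ord(Y)$, combined with the transitivity and restriction properties of portliness listed above. Consequently each connected component $C$ of $p^{-1}(\U)$ meets $\ord(\widetilde Y^S)$ in a single connected set $C\cap\ord(\widetilde Y^S)$, which is exactly one component of $p^{-1}(\U\setminus S)$.

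Now suppose $\gamma_1,\gamma_2$ have free end in $\U\setminus S$ and their lifts $\hat\gamma_1,\hat\gamma_2$ from a fixed $\hat a\in p^{-1}(a)$ end in the same component $C$. Their endpoints lie in the connected set $C\cap\ord(\widetilde Y^S)$, so I join them by a path $\hat\delta$ there and set $\delta:=p\circ\hat\delta\subset\U\setminus S$. Then $\gamma_1*\delta$ and $\gamma_2$ are paths in $Y\setminus S$ whose lifts from $\hat a$ share the same endpoint; since $\ord(\widetilde Y^S)$ is the \emph{universal} covering of $Y\setminus S$, they are homotopic rel endpoints in $Y\setminus S$. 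As $\gamma_1*\delta$ is homotopic to $\gamma_1$ by sliding the free end along $\delta$ inside $\U\setminus S$, we conclude $\gamma_1$ and $\gamma_2$ are homotopic with free end in $\U$, giving injectivity. The hard part is precisely this interplay at the branching locus: because $p$ is an honest covering only on the regular part, the correspondence between components of $p^{-1}(\U)$ and homotopy classes cannot be read off on $\U$ directly, and one must pass to the connected portly subset $\U\setminus S$ and invoke the simple connectivity of $\ord(\widetilde Y^S)$.
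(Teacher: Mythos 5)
Your proof is correct and follows essentially the route the paper intends: the paper leaves this statement (together with Proposition \ref{prop:general_fiber_path}) to the reader as a corollary of Proposition \ref{prop:spread_fiber}, and your factorization through Proposition \ref{prop:general_fiber_path} --- establishing that homotopy classes and $p$-homotopy classes coincide, via the connectedness of $C\cap p^{-1}(\U\setminus S)$ for each component $C$ of $p^{-1}(\U)$ and the simple connectedness of the universal cover of $Y\setminus S$ --- is exactly the missing dictionary between path classes and connected components of $p^{-1}(\U)$. One cosmetic correction: since $\widetilde Y^S\rightarrow Y$ is only \emph{possibly} branched over $S$, the ordinary locus $\ord(\widetilde Y^S)=p^{-1}(\ord(Y))$ may strictly contain $p^{-1}(Y\setminus S)$, so your identifications should be phrased with $p^{-1}(Y\setminus S)$ (which is the simply connected universal covering of $Y\setminus S$ by construction) in place of $\ord(\widetilde Y^S)$ throughout; this is all your argument actually uses.
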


\begin{example} Consider $Y = \RR^2$ and $S=\{(\frac{1}{n},0)~:~n\in \N^*\cup\{\infty\}\}$. The fiber above  $(0,0)$ in $\widetilde Y^S$ is homeomorphic to $\mathbb Z^\N$ endowed with the weak topology; hence is not locally compact. This answers positively Problem 10.8 of \cite{MR2198590}.
\end{example}
\begin{proof}
 Consider the neighborhood basis given by the discs $D_n$ centered at $(0,0)$ of radius $\frac{1}{n-1/2}$ for $n\in \N^*$. Let $a = (2,0)$ and  $b:=(0,0)$, for $n\in \N^*$, the natural map $\pi_1(a,D_{n+1})\rightarrow\pi_1(a,D_{n})$ has an infinite fiber above every point. The result then follows from Proposition \ref{prop:general_fiber_path_universal}.
\end{proof}

\begin{example} Consider $Y = \RR^2$ and $S=\{(\frac{1}{n},0)~:~n\in \N^*\cup\{\infty\}\}$ and denote by $\Gamma$ the absolute Galois group $\Gamma(\widetilde Y^S/Y)$. 
Let $\Gamma_2:=\langle \Gamma[\gamma_n]^2\Gamma~:~ n\in \N \rangle$ where $\gamma_n$ is a simple loop around $(1/n,0)$. Consider $Y_2$, the completion of the spread $\Gamma_2\backslash\ord(\widetilde Y^S) \rightarrow Y$.
The fiber above $(0,0)$ in  $Y_2$ is  a Cantor set. This answers positively Problem 10.7 of \cite{MR2198590}.
\end{example}
\begin{proof}
 Apply Proposition \ref{prop:general_fiber_path} with the same neighborhood basis are the previous example. Then use the fact that the sets $\pi_1(a,p,D_n)$ have cardinal $2^n$ and the bonding maps are all or order 2.
\end{proof}

\subsection{Galoisian branched covering}

The usual notion of Galoisian covering can be extended in a natural way to branched covering via the group of automorphisms of the branched covering. Although, as we shall see, the completion of a Galoisian covering need not be Galoisian. For instance the universal  covering possibly branched above some locus is not always Galoisian, however we provide a topological criteria akin to semi-local simple connectedness which ensure it is. It allows to state a Galoisian correspondance for branched covering.

\begin{defi} A branched covering $X\xrightarrow{p} Y$ is Galoisian if the group $\Gamma(X/Y)$ of automorphism of $p$ acts transitively on the fibers of $p$. It is called quasi-Galoisian if $p$ is the completion of a Galoisian covering.

\end{defi}

\begin{rem} If a branched covering $X\xrightarrow{p} Y$ is Galoisian then $p$ induces an homeomorphism $\Gamma(X/Y)\backslash X \rightarrow Y$. 
\end{rem}

\begin{lem} Let $X\xrightarrow{p} Y$ be a branched covering, the group of automorphisms of $p$ is exactly the group of automorphisms of the induced branched covering $p^{-1}(\U) \rightarrow \U$ for any $\U$ portly in $Y$:
$$\Gamma(X/Y) = \Gamma(p^{-1}(\U)/\U)$$
    
\end{lem}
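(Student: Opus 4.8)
The plan is to show that the restriction map is a group isomorphism between $\Gamma(X/Y)$ and $\Gamma(p^{-1}(\U)/\U)$. First I would fix $\U$ portly in $Y$ and recall, from the corollary following Lemma~\ref{lem:branched_covering_restriction}, that $p^{-1}(\U)\to \U$ is indeed a branched covering and that $p^{-1}(\U)$ is portly, in particular dense, in $X$. Any $\varphi\in \Gamma(X/Y)$ satisfies $p\circ\varphi=p$, hence $\varphi$ permutes the fibers of $p$; since $p^{-1}(\U)=\bigcup_{y\in\U}p^{-1}(y)$ is a union of fibers, $\varphi$ maps $p^{-1}(\U)$ homeomorphically onto itself and its restriction commutes with $p_{|p^{-1}(\U)}^{|\U}$. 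Thus $\varphi\mapsto \varphi_{|p^{-1}(\U)}$ defines a group homomorphism
$$R:\Gamma(X/Y)\longrightarrow \Gamma(p^{-1}(\U)/\U).$$

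Next I would prove that $R$ is injective. Two automorphisms $\varphi_1,\varphi_2\in\Gamma(X/Y)$ with $R(\varphi_1)=R(\varphi_2)$ agree on $p^{-1}(\U)$, which is dense in $X$; as $X$ is Hausdorff, two continuous maps agreeing on a dense subset coincide, so $\varphi_1=\varphi_2$.

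It then remains to prove surjectivity, which is the crux of the argument and where the completion theory does the work. An element $\psi\in \Gamma(p^{-1}(\U)/\U)$ is in particular an automorphism of the spread $p^{-1}(\U)\to Y$: post-composing the map to $\U$ with the inclusion $\U\hookrightarrow Y$ does not change the condition $p\circ\psi=p$. By Lemma~\ref{rem:completion_covering}, $X\to Y$ is the completion of the spread $p^{-1}(\U)\to Y$. Endowing $\Gamma(p^{-1}(\U)/\U)$ with the discrete topology makes it a totally discontinuous group acting continuously by spread automorphisms on $p^{-1}(\U)$, so Lemma~\ref{lem:group_action} extends this action uniquely to a continuous action by automorphisms of $X\to Y$; equivalently, one may invoke the full faithfulness of the completion functor in Theorem~\ref{theo:Fox_completion_functor}. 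Either way each $\psi$ extends to some $\overline\psi\in\Gamma(X/Y)$ with $\overline\psi_{|p^{-1}(\U)}=\psi$, that is $R(\overline\psi)=\psi$, proving surjectivity. Consequently $R$ is a group isomorphism and we identify $\Gamma(X/Y)=\Gamma(p^{-1}(\U)/\U)$. The main obstacle is precisely this surjectivity step: one must be sure that an abstract automorphism of the portly part genuinely extends to a homeomorphism of all of $X$, and not merely to a spread morphism, which is guaranteed by the uniqueness and functoriality of the Fox completion.
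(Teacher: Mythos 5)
Your proof is correct and follows essentially the same route as the paper: restriction of automorphisms to $p^{-1}(\U)$ in one direction, and extension in the other direction via the fact that $X\to Y$ is the Fox completion of $p^{-1}(\U)\to Y$ together with the functoriality (full faithfulness) of the completion functor. Your treatment is merely more explicit about injectivity and surjectivity of the restriction map, which the paper compresses into the word ``uniquely.''
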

\begin{proof}
 An automorphism $\phi$ of $X \rightarrow Y$ lifts the identity thus  for any $\U$ subset  of $Y$, the automorphism $\phi$ preserves $p^{-1}(\U)$. In particular, for any $\U$ portly subset of $Y$, such a $\phi$ induces an automorphism of $p^{-1}(\U) \xrightarrow{~~~} \U$. Furthermore, with $\U$ portly in $Y$, by functoriality of the spread completion and since the completion of $p^{-1}(\U)\rightarrow Y$ is $X\rightarrow Y$, an automorphism of $p^{-1}(\U) \xrightarrow{~~~} \U$ extends uniquely to a automorphism of $X \xrightarrow{~~~} Y$.   Therefore, $\Gamma(X/Y) = \Gamma(p^{-1}(\U)/\U))$.
\end{proof}
\begin{cor}Let $X\xrightarrow{p} Y$ be a branched covering, then
 $\Gamma(X/Y) =  \Gamma(\ord(X)/\ord(Y))$.
\end{cor}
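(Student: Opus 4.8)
The plan is to derive this directly from the preceding Lemma, of which it is the special case $\U = \ord(Y)$. So the entire argument consists in checking that $\ord(Y)$ satisfies the hypothesis of that Lemma and that the induced branched covering $p^{-1}(\ord(Y))\to \ord(Y)$ is exactly $\ord(X)\to \ord(Y)$.

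First I would recall that, by the very definition of a branched covering, the ordinary locus $\ord(Y)$ is a portly subset of $Y$; thus it is an admissible choice of $\U$ in the statement $\Gamma(X/Y)=\Gamma(p^{-1}(\U)/\U)$. Next I would invoke the definition of the ordinary locus of the total space, namely that $\ord(X)$ is precisely the inverse image $p^{-1}(\ord(Y))$ of the ordinary locus of the base. Consequently the induced branched covering $p^{-1}(\ord(Y))\to \ord(Y)$ coincides, as a spread, with $\ord(X)\to \ord(Y)$, and their automorphism groups are literally the same group.

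Combining these two observations, the Lemma applied with $\U=\ord(Y)$ yields
$$\Gamma(X/Y) = \Gamma\bigl(p^{-1}(\ord(Y))/\ord(Y)\bigr) = \Gamma(\ord(X)/\ord(Y)),$$
which is the desired equality. There is essentially no obstacle here: the only point requiring a word of justification is that $\ord(Y)$ is portly, and this is not something to be proved but rather a defining property of branched coverings recorded in the relevant Definition. The corollary is therefore immediate once the correct portly subset is selected.
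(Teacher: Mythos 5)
Your proof is correct and is exactly the paper's intended argument: the corollary follows from the preceding lemma applied with $\U = \ord(Y)$, using that $\ord(Y)$ is portly by the definition of a branched covering and that $\ord(X) = p^{-1}(\ord(Y))$ by the definition of the ordinary locus of the total space. The paper states the corollary without a written proof precisely because this specialization is immediate, and you have identified and justified both of the (only) points that need checking.
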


The universal covering branched over some locus need not be Galoisian. Indeed, one can consider $Y=\RR^2$ and $S:=\{(1/n,0):n\in \N^*\}\cup \{(0,0)\}$. On the one hand, the automorphisms group of $\widetilde Y^S$ is countable (it is the fundamental group of $Y\setminus S$ thus a free group generated by countably infinitely many generators). On the other hand, there are uncountably infinitely many homotopy classes in of path from $(2,0)$ to $(0,0)$ and none of these classes are equivalent in the sense of Proposition \ref{prop:ramified_covering_path};
hence the fibre above $(0,0)$ is uncountable. The action of $\Gamma(X/Y)$ is then not transitive on the fiber above $0$ and $\widetilde Y^S \rightarrow Y$ is not Galoisian.

\begin{defi} Let $X$ be a Hausdorff, connected, locally path connected topological space.  We say that an open subset $\U$ is semi-locally simply connected at $x \in \overline \U$ if there exists a connected open neighborhood $\V$ of $x$ such that every loop $\gamma$ of $\Omega(x,\V\cap \U)$ is almost trivial. 

$\U$ is then semi-locally simply connected in $X$ if $\U$ is dense and semi-locally simply connected at every point of $X$
\end{defi}
\begin{rem}
 The definition of semi-locally simply connected is coherent witht the usual definition of semi-locally simply connected. Indeed a topological space $X$ is semi-locally simply connected if and only if it is semi-locally simply connected in  itself.
\end{rem}

\begin{rem} \label{rem:simple_cases} Using the same notation as in the definition above, there are generic situations in which dense open subsets are semi-locally simply connected at every point of the whole space.
\begin{itemize}
 \item If $X$ is a manifold and $X\setminus \U$ is a codimension at least 2 submanifold, more generally if $X$ is a simplicial complex and $X\setminus \U$ is a union of facets (such that $\U$ is still dense and locally connected);
 \item  or if $X$ admits a basis of neighborhoods such that for each $\V$ of said basis, $\V\cap \U$ is semi-locally simply connected $\V$;
\end{itemize}
then $\U$ is locally simply connected at every point of $X$. 
\end{rem}

While considering the universal covering $\widetilde Y^S$ of some $Y$ possibly branched over some subset $S$, there is an ambiguity: the branching locus of $\widetilde Y^S$ might be smaller than $S$. One can consider the example where $Y=\mathbb S^2$ the 2-dimensionnal sphere and $S$ is a singleton, since the complement of a point is simply connected, the universal covering of $\mathbb S^2$ possibly branched at some $y\in Y$ is itself and the branching locus is empty.

\begin{lem}\label{lem:dense_orbits}
 Let $X\xrightarrow{p} Y$ be a quasi-galoisian branched covering. Let $a\in Y$ and $\hat a \in p^{-1}(a)$, the orbit  $\Gamma(X/Y)\hat a$  is dense in $p^{-1}(a)$.
\end{lem}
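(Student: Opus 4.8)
The plan is to use the projective-limit description of the fiber (Proposition \ref{prop:spread_fiber}) to reduce density of the orbit to a transitivity statement about the action of $\Gamma:=\Gamma(X/Y)$ on connected components of preimages of connected open sets. Fix a connected neighborhood basis $\boldsymbol{\U}$ of $a$. By Proposition \ref{prop:spread_fiber}, $p^{-1}(a)\simeq\varprojlim_{\U\in\boldsymbol{\U}}X_\U$, where $X_\U$ is the discrete set of connected components of $p^{-1}(\U)$; as the proof of that proposition shows, a neighborhood basis of a point $\hat b\in p^{-1}(a)$ is given by the sets $\V\cap p^{-1}(a)$, with $\V$ the connected component of $\hat b$ in $p^{-1}(\U)$ for $\U\in\boldsymbol{\U}$. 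Since each $g\in\Gamma$ lifts the identity on $Y$, it preserves the fiber ($p(g\hat a)=a$); hence to prove the orbit is dense it suffices to show that for every connected open $\U\ni a$ and every connected component $\V$ of $p^{-1}(\U)$ there is $g\in\Gamma$ with $g\hat a\in\V$. Equivalently, one must show that $\Gamma$ acts transitively on $X_\U$.

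The key step is thus to establish this transitivity for every connected open $\U$. Let $U_0\subseteq Y$ be a portly subset over which $p$ restricts to the Galoisian covering of which $X\to Y$ is the completion (such a $U_0$ exists by the definition of quasi-Galoisian). By the Lemma identifying automorphism groups, $\Gamma=\Gamma(p^{-1}(U_0)/U_0)$, and Galoisianity means this group acts transitively on each fiber over a point of $U_0$. I would first reduce $X_\U$ to the components of $p^{-1}(U_0\cap\U)$: the set $U_0\cap\U$ is portly, hence connected, in $\U$, and by Lemma \ref{lem:branched_covering_restriction} together with its Corollary every connected component $\V$ of $p^{-1}(\U)$ is a branched covering of $\U$ in which $p^{-1}(U_0)\cap\V$ is portly, hence connected. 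This yields a $\Gamma$-equivariant bijection between $X_\U$ and the set of connected components of the covering $p^{-1}(U_0\cap\U)\to U_0\cap\U$.

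It then remains to invoke the standard fact that the deck group of a covering over a connected base permutes the connected components of the total space transitively: each component surjects onto the connected base $U_0\cap\U$ (its image is open and closed), so any two components meet a common fiber, and transitivity of $\Gamma$ on that fiber moves one component onto the other. Composing the two reductions gives transitivity of $\Gamma$ on $X_\U$, hence an element $g\in\Gamma$ carrying the component of $\hat a$ onto the prescribed component $\V$ of $\hat b$; then $g\hat a\in\V\cap p^{-1}(a)$. As $\U$ and $\hat b$ are arbitrary, $\Gamma\hat a$ meets every basic open set of $p^{-1}(a)$, so it is dense.

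The main obstacle I anticipate is the bookkeeping in the reduction from $X_\U$ to the components over $U_0\cap\U$: one must verify that restricting to a connected component $\V$ of $p^{-1}(\U)$ preserves the branched-covering structure and that the portly part $p^{-1}(U_0)\cap\V$ stays connected, so that the correspondence between components is genuinely bijective and $\Gamma$-equivariant. Once this identification is secured, the transitivity statement and the projective-limit description of the fiber make the density essentially formal.
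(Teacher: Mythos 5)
Your proposal is correct and takes essentially the same approach as the paper: both reduce density of the orbit to transitivity of $\Gamma(X/Y)$ on the discrete sets $X_\U$ of connected components of $p^{-1}(\U)$, and then conclude via the description of the fiber given by Proposition \ref{prop:spread_fiber}. The only difference is one of detail: the paper asserts this transitivity as an immediate consequence of quasi-Galoisianity, whereas you verify it explicitly (identifying $\Gamma(X/Y)$ with the deck group of the Galoisian covering over a portly subset via Lemma \ref{lem:branched_covering_restriction} and the surjectivity of components onto the connected base), which legitimately fills in the step the paper leaves implicit.
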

\begin{proof} We use the same notation as in Proposition \ref{prop:spread_fiber}. 
  Since the branched covering $p$ is quasi-Galoisian, for every $\U\in \boldsymbol\U$, $\Gamma(X/Y)$ acts transitively on $X_\U$. Furthermore, $\Gamma(X/Y)$ acts by homeomorphism on $X$, hence preserves connected components, its actions thus commutes with the bonding maps of the projective system $(X_\U)_{\U\in \boldsymbol{\U}}$. Finally, the action of $\Gamma(X/Y)$ on each $X_\U$ lifts to an action on $\varprojlim_\U X_\U$ whose orbits are dense.
\end{proof}

\begin{prop}\label{cor:Galoisian_fiber}
  Let $X\xrightarrow{p} Y$ be a quasi-Galoisian branched covering possibly branched over $S\subset Y$.
  
  For all $b\in Y$ and all connected neighborhood basis $(\U_i)_{i\in I}$ of $b$ we have 
  $$ p^{-1}(b) \simeq \varprojlim_{i\in I} \Gamma(X/Y)/\Gamma_{\hat\U_i} $$
  where $\hat\U_i$ is the connected component of $\hat b$ in $p^{-1}(\U_i)$ for $i\in I$.
  
\end{prop}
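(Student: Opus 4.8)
The plan is to reduce the statement to the purely topological description of the fibre as a projective limit of sets of connected components provided by Proposition~\ref{prop:spread_fiber}, and then to replace each such set of components by a homogeneous space for $\Gamma := \Gamma(X/Y)$ via the orbit–stabilizer theorem. So first I would fix a lift $\hat b \in p^{-1}(b)$ and, for each $i\in I$, write $X_{\U_i}$ for the discrete set of connected components of $p^{-1}(\U_i)$, with $\hat\U_i \in X_{\U_i}$ the component containing $\hat b$. Since $X\xrightarrow{p}Y$ is a branched covering (hence a complete spread with $Y$ Hausdorff), Proposition~\ref{prop:spread_fiber} gives a homeomorphism $p^{-1}(b)\simeq \varprojlim_i X_{\U_i}$, where the bonding map $\pi_{ij}\colon X_{\U_i}\to X_{\U_j}$ (for $\U_i\subset\U_j$) sends a component to the component of $p^{-1}(\U_j)$ containing it. It then suffices to exhibit an isomorphism of projective systems of discrete sets between $(X_{\U_i},\pi_{ij})$ and $(\Gamma/\Gamma_{\hat\U_i})$ equipped with its natural coset projections.

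The key input is that, $p$ being quasi-Galoisian, $\Gamma$ acts transitively on each $X_{\U_i}$; this is exactly the transitivity already established in the proof of Lemma~\ref{lem:dense_orbits}, and I would simply invoke it. As $\Gamma_{\hat\U_i}$ is by definition the set-wise stabilizer of the component $\hat\U_i$, the orbit–stabilizer theorem yields a $\Gamma$-equivariant bijection $\Gamma/\Gamma_{\hat\U_i}\to X_{\U_i}$ sending $g\Gamma_{\hat\U_i}$ to $g\cdot\hat\U_i$, both sides carrying the discrete topology.

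The one point requiring care is compatibility with the transition maps. For $\U_i\subset\U_j$ the set $\hat\U_i$ is connected, contains $\hat b$, and lies in $p^{-1}(\U_j)$, so $\hat\U_i\subset\hat\U_j$. If $g\in\Gamma_{\hat\U_i}$, then $g\hat\U_i=\hat\U_i$ meets $\hat\U_j$, while $g\hat\U_j$ is again a connected component of $p^{-1}(\U_j)$ containing $g\hat\U_i=\hat\U_i$; two components that meet must coincide, so $g\hat\U_j=\hat\U_j$ and hence $\Gamma_{\hat\U_i}\subset\Gamma_{\hat\U_j}$. This makes the natural projection $\Gamma/\Gamma_{\hat\U_i}\to\Gamma/\Gamma_{\hat\U_j}$ well defined, and the same argument applied to a general $g\in\Gamma$ shows that the component of $g\hat\U_i$ in $X_{\U_j}$ is precisely $g\hat\U_j$; thus $\pi_{ij}(g\cdot\hat\U_i)=g\cdot\hat\U_j$, i.e.\ the bijections intertwine $\pi_{ij}$ with the coset projection. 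Passing to the projective limit (taken in Hausdorff spaces, which for these discrete factors agrees with the topological limit) then gives the desired homeomorphism $p^{-1}(b)\simeq\varprojlim_i \Gamma/\Gamma_{\hat\U_i}$.

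I expect the main obstacle to be exactly this last verification: keeping track of which automorphisms stabilize which components and checking that the orbit–stabilizer identifications are natural in $\U_i$, together with making sure the transitivity of $\Gamma$ on each $X_{\U_i}$ is genuinely available from the quasi-Galoisian hypothesis (via Lemma~\ref{lem:dense_orbits}) rather than from any assumed Galoisianness of $p$ itself.
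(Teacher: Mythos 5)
Your proposal is correct and follows essentially the same route as the paper: invoke Proposition~\ref{prop:spread_fiber} to identify $p^{-1}(b)$ with $\varprojlim_i X_{\U_i}$, use the transitivity of $\Gamma(X/Y)$ on components of $p^{-1}(\U_i)$ coming from quasi-Galoisianness, and apply orbit--stabilizer to identify $X_{\U_i}$ with $\Gamma(X/Y)/\Gamma_{\hat\U_i}$. The only difference is that you spell out the compatibility of these identifications with the bonding maps (and the inclusions $\Gamma_{\hat\U_i}\subset\Gamma_{\hat\U_j}$), which the paper leaves implicit in its final sentence.
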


\begin{proof}
  Since $X\xrightarrow{p} Y$ is quasi-Galoisian, $\Gamma(X/Y)$ acts transitively on the connected components of $p^{-1}(\U)$ for every open $\U\subset Y$. The group $\Gamma_\V$ is by definition the stabilizer of $\V$ for $\V$ a connected of $p^{-1}(\U)$  for some open $\U\subset Y$. The discrete spaces $\Gamma(X/Y)/\Gamma_{\hat\U}$ and $X_\U$ are thus homeomorphic (using the same notations as Proposition \ref{prop:spread_fiber}). The result then follows from Proposition \ref{prop:spread_fiber}.

  \end{proof}

\begin{lem} \label{lem:alternative_countable}
 Let $X\xrightarrow{p} Y$ be a quasi-Galoisian branched covering possibly branched over $S\subset Y$ and let $b\in Y$. 
 
 Assume that $\Gamma(X/Y)$ is countable, then the following are equivalent:
 \begin{enumerate}[(i)]
  \item $p^{-1}(b)$ is discrete;
  \item $p^{-1}(b)$ is countable;
  \item the action of $\Gamma(X/Y)$ on $p^{-1}(b)$ is transitive.
 \end{enumerate}

\end{lem}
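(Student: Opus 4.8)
The plan is to reduce everything to the explicit fibre description of Proposition~\ref{cor:Galoisian_fiber} and then run a Baire category argument. Write $\Gamma := \Gamma(X/Y)$ and $F := p^{-1}(b)$. Since $Y$ is first countable and locally connected, $b$ admits a countable decreasing neighbourhood basis $(\U_n)_{n\in\N}$ of connected open sets (refine a countable basis by taking the connected component of $b$ in finite intersections, these being open in a locally connected space). Fixing $\hat b\in F$ and letting $\hat\U_n$ be the connected component of $\hat b$ in $p^{-1}(\U_n)$, Proposition~\ref{cor:Galoisian_fiber} furnishes a homeomorphism $F\simeq\varprojlim_n \Gamma/\Gamma_{\hat\U_n}$. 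Tracing the map $\phi$ of Proposition~\ref{prop:spread_fiber} (which sends $x$ to the family of its connected components), one checks this identification is $\Gamma$-equivariant for the diagonal left-translation action on the limit; I would record this equivariance as the first routine point.

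The two structural inputs I would isolate next are as follows. First, because $\Gamma$ is countable, each $\Gamma/\Gamma_{\hat\U_n}$ is a countable discrete space, so $F$ is a closed subspace of the countable product $\prod_n \Gamma/\Gamma_{\hat\U_n}$ of countable discrete spaces; hence $F$ is a nonempty \emph{Polish} space (separable and completely metrizable). Second, Lemma~\ref{lem:dense_orbits} tells us that \emph{every} $\Gamma$-orbit in $F$ is dense. The interplay of these two facts is what forces the equivalences, which I would prove as the cycle $(i)\Rightarrow(ii)\Rightarrow(iii)\Rightarrow(i)$.

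For $(i)\Rightarrow(ii)$: a discrete subspace of a separable space is countable (its only dense subset is itself), so if $F$ is discrete it is countable. For $(ii)\Rightarrow(iii)$, which is where Baire enters: if $F$ is countable and nonempty Polish it cannot be perfect, for otherwise each singleton would be closed with empty interior, exhibiting $F$ as a countable union of nowhere dense sets and contradicting that a nonempty completely metrizable space is a Baire space; hence $F$ has an isolated point $\ell_0$. Then $\{\ell_0\}$ is open, and by Lemma~\ref{lem:dense_orbits} every orbit is dense and so meets $\{\ell_0\}$, i.e.\ contains $\ell_0$; since orbits partition $F$ there is a single orbit and the action is transitive. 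For $(iii)\Rightarrow(i)$: transitivity gives $F=\Gamma\hat b$, the image of the countable group $\Gamma$, hence $F$ is countable, so the same Baire argument produces an isolated point $\ell_0$; as $\Gamma$ acts transitively by homeomorphisms and homeomorphisms preserve isolated points, every point of $F$ is isolated, i.e.\ $F$ is discrete.

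The main obstacle is conceptual rather than computational: recognizing that countability of $\Gamma$ together with first countability of $Y$ turns the fibre into a Polish space, so that the Baire category theorem becomes available, and then combining the resulting isolated point with the density of \emph{all} orbits (Lemma~\ref{lem:dense_orbits}) to collapse the orbit space to a single point. By comparison, the $\Gamma$-equivariance of the identification in Proposition~\ref{cor:Galoisian_fiber} and the reduction to a countable index set are the routine verifications.
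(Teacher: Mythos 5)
Your proof is correct, but its engine is genuinely different from the paper's. Both arguments begin identically: choose a countable decreasing basis of connected neighbourhoods of $b$ and identify $p^{-1}(b)$ with the countable projective limit of the discrete spaces of connected components of the $p^{-1}(\U_n)$ (Propositions \ref{prop:spread_fiber} and \ref{cor:Galoisian_fiber}). The paper then proceeds combinatorially: by quasi-Galoisian homogeneity the branching number $N_n=\#p_{n+1,n}^{-1}(x)$ is independent of the choice of $x$, and one gets a dichotomy --- if $N_n>1$ for infinitely many $n$, the limit is uncountable and non-discrete, and no countable group can act transitively on it, so all three conditions fail simultaneously; if $N_n>1$ for only finitely many $n$, the inverse system stabilizes, so the fiber is a discrete homogeneous $\Gamma(X/Y)$-set, hence countable, and all three conditions hold. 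You instead observe that the limit is a nonempty Polish space (closed in a countable product of countable discrete spaces, nonempty by surjectivity of $p$) and combine the Baire category theorem with Lemma \ref{lem:dense_orbits}: countability plus Polishness forces an isolated point; an isolated point plus density of \emph{every} orbit forces transitivity; and transitivity transports isolatedness to all points, giving discreteness. What the paper's route buys is elementarity (pure counting on the tree of components) and a sharper structural dichotomy --- either the system stabilizes at a finite stage or the fiber is perfect and uncountable; what yours buys is brevity and a clean separation of the hypotheses, with completeness of the spread entering only through Polishness of the fiber and quasi-Galoisianity only through density of orbits, at the cost of invoking Polish-space and Baire-category machinery. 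One cosmetic remark: the $\Gamma$-equivariance of the identification, which you flag as a routine preliminary, is not actually needed --- Lemma \ref{lem:dense_orbits} asserts density of orbits in $p^{-1}(b)$ with its subspace topology directly, and that is the only point where the group action interacts with the topology of the fiber.
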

\begin{proof}
 Consider a neighborhood basis $(\U_n)_{n\in \N}$ of $b$ in $Y$ indexed over $\N$ such that $\U_n\subset \U_m$ whenever $n\geq m$. Denote by $X_n$ the discrete space of connected components of $p^{-1}(\U_n)$ and $(p_{n,m})_{n\geq m}: X_n \rightarrow X_m$ the bonding maps which associated a connected component $\V$ of $p^{-1}(\U_n)$
 its connected component in $p^{-1}(\U_m)$.
 For $n\in\N$, define $N_n := \#p_{n+1,n}^{-1}(x)$ for some $x\in X_n$. Since the action of $\Gamma(X/Y)$ is transitively on each $X_n$ and commutes with the bonding maps, $N_n$ does not depends on the choice of $x$.
 \begin{itemize}
  \item  If $N_n>1$ for infinitely many $n\in \N$, then the projective limit $\varprojlim_n X_n$ is uncountable, hence the action of $\Gamma(X/Y)$ is not transitive. Furthermore, a basis of the topology is given by the inverse image of   subsets of the $X_n$ by the natural projection $p_n : \varprojlim_n X_n \rightarrow X_n$. None of these preimage is reduced to a point, hence the topology of $\varprojlim_n X_n$ is not discrete.
  \item  If $N_n> 1$ only for finitely many $n\in \N$, then the projective limit  $\varprojlim_n X_n$ is  stationnary. Then $\varprojlim_n X_n \simeq X_m$ for some $m\in \N$ big enough. In particular $\varprojlim_n X_n$ is discrete and the action of $\Gamma(X/Y)$ on it is transitive, since $\Gamma(X/Y)$ is countable then so is $\varprojlim_n X_n$. 
 \end{itemize}

\end{proof}

\begin{prop} \label{prop:Galoisian_semilocal}
Let $Y$ be a connected, locally path connected, second countable, Hausdorff topological space and let $S\subset Y$ skeletal such that $Y\setminus S$ is semi-locally simply connected.
Let $\widetilde Y^S\xrightarrow{p} Y$ be the universal covering of $Y$ possibly branched  above $S$. The following are equivalent:
\begin{enumerate}[(i)]
\item $\widetilde Y^S\xrightarrow{p} Y$  is Galoisian;
 \item the branched covering $\widetilde Y^S\xrightarrow{p} Y$ has discrete fibers;
  \item $Y\setminus S$ is semi-locally simply connected in $Y$;
\end{enumerate}
\end{prop}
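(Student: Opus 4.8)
The plan is to prove the cycle by first settling $(i)\Leftrightarrow(ii)$ via the Galois group, and then identifying discreteness of the fibre with semi-local simple connectedness one point at a time. To begin with, I would record that $\widetilde Y^S\to Y$ is quasi-Galoisian, being the completion of the Galoisian covering $\widetilde{Y\setminus S}\to Y\setminus S$, and that its automorphism group is countable: since $S$ is skeletal, $Y\setminus S$ is portly and $\ord(\widetilde Y^S)=p^{-1}(Y\setminus S)=\widetilde{Y\setminus S}$, so the identity $\Gamma(\widetilde Y^S/Y)=\Gamma(\ord(\widetilde Y^S)/\ord(Y))$ exhibits the Galois group as the deck group $\pi_1(Y\setminus S,y_0)$; as $Y$ is second countable so is $\widetilde{Y\setminus S}$, and a covering of a second countable space has countable fibres, whence $\Gamma(\widetilde Y^S/Y)$ is countable. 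With this, $(i)\Rightarrow(ii)$ is the implication \emph{transitive $\Rightarrow$ discrete} of Lemma~\ref{lem:alternative_countable} applied over each $b\in Y$, while $(ii)\Rightarrow(i)$ follows from Lemma~\ref{lem:dense_orbits}, since a dense orbit inside a discrete fibre is the whole fibre, so the action is transitive over every point. (This direction does not even use countability.)

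The substance is $(ii)\Leftrightarrow(iii)$, which I would prove pointwise: for each $b\in Y$, the fibre $p^{-1}(b)$ is discrete if and only if $Y\setminus S$ is semi-locally simply connected at $b$. Fix $b$ and use the path model of Proposition~\ref{prop:ramified_covering_path}, so that a point of $p^{-1}(b)$ is an almost-homotopy class $[\gamma]$ of a path $\gamma:y_0\rightsquigarrow b$ with $\gamma([0,1[)\subset Y\setminus S$. Because $p$ is a spread, a neighbourhood basis of a point $\hat b=[\gamma_0]$ is given by the connected components $\hat\U$ of $\hat b$ in $p^{-1}(\U)$, with $\U$ running over connected open neighbourhoods of $b$; thus $\hat b$ is isolated in $p^{-1}(b)$ iff some such $\hat\U$ meets $p^{-1}(b)$ only in $\hat b$. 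Writing an arbitrary class as $[\gamma]$ with $\gamma\simeq\gamma_0*\ell$ for $\ell\in\Omega(b,b,Y\setminus S)$ (so that $\gamma_0^{-1}*\gamma\simeq\ell$), the key computation is the equivalence
\[
[\gamma]\in\hat\U \iff \ell \text{ is homotopic in } \Omega(b,b,Y\setminus S)\text{ to a loop of } \Omega(b,b,\U\cap(Y\setminus S)).
\]

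For the implication $\Leftarrow$ I would first replace $\gamma$ by the homotopic path $\gamma_0*\ell'$ with $\ell'\subset\U\cap(Y\setminus S)$ (legitimate by Lemma~\ref{lem:almost_homotopic_carac2}), and then lift: the lift of $\ell'$ starting at $\hat b$ stays in $p^{-1}(\U)$ and is connected, hence lies in $\hat\U$, so its endpoint $[\gamma]$ lies in $\hat\U$. For $\Rightarrow$, I would use that $\hat\U\to\U$ is a branched covering by Lemma~\ref{lem:branched_covering_restriction}, so its ordinary locus $\ord(\widetilde Y^S)\cap\hat\U$ is portly, hence path connected; joining the tails of the lifts of $\gamma$ and $\gamma_0$ by a path $\hat\omega$ there and projecting to $\omega\subset\U\cap(Y\setminus S)$, the equality of endpoints in the universal cover $\widetilde{Y\setminus S}$ gives $\gamma_{|[0,s]}\simeq\gamma_{0|[0,s_0]}*\omega$ in $Y\setminus S$, whence $\gamma_0^{-1}*\gamma$ is homotopic to $\gamma_{0|[s_0,1]}^{-1}*\omega*\gamma_{|[s,1]}\in\Omega(b,b,\U\cap(Y\setminus S))$.

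Granting this equivalence, $\hat b$ is isolated iff there is a connected open $\U\ni b$ such that every $\ell\in\Omega(b,b,Y\setminus S)$ homotopic into $\U\cap(Y\setminus S)$ is almost trivial; since almost triviality is a homotopy invariant, this is exactly the condition that every loop of $\Omega(b,b,\U\cap(Y\setminus S))$ be almost trivial, i.e. that $Y\setminus S$ be semi-locally simply connected at $b$. As this criterion is visibly independent of the chosen class $\hat b=[\gamma_0]$, either every point of $p^{-1}(b)$ is isolated or none is, so $p^{-1}(b)$ is discrete iff $Y\setminus S$ is semi-locally simply connected at $b$; quantifying over $b$ and recalling that $Y\setminus S$ is dense yields $(ii)\Leftrightarrow(iii)$. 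I expect the displayed identification of $\hat\U\cap p^{-1}(b)$ to be the main obstacle, since it is where the topology of the branched fibre must be matched precisely with the almost-trivial-loop condition; the covering-space bookkeeping on the ordinary locus, namely that equal endpoints in $\widetilde{Y\setminus S}$ is the same as being homotopic in $Y\setminus S$, together with Lemma~\ref{lem:almost_homotopic_carac2}, is what makes it go through.
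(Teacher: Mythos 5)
Your handling of $(i)\Leftrightarrow(ii)$ is correct and coincides with the paper's (Lemma~\ref{lem:dense_orbits} for one direction, countability of the Galois group plus Lemma~\ref{lem:alternative_countable} for the other), and the $\Rightarrow$ direction of your key claim is sound: joining the tails of the two lifts inside the path connected set $\hat\U\cap\ord(\widetilde Y^S)$ and invoking simple connectedness of $\widetilde{Y\setminus S}$ is exactly the surgery the paper uses for $(ii)\Rightarrow(iii)$.

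The gap is in the dictionary you set up between fibre points and loop classes, and it breaks precisely the direction $(ii)\Rightarrow(iii)$. When $b\in S$, a concatenation such as $\gamma_0*\ell$ is \emph{not} an element of $\Omega(y_0,b,Y\setminus S)$: the junction point $b$ lies in the interior of the concatenated path, and membership in these path spaces requires the interior to avoid $S$. Loops at $b$ cannot be composed with paths ending at $b$, so ``writing an arbitrary class as $[\gamma]$ with $\gamma\simeq\gamma_0*\ell$'' is not legitimate. Given a loop $\ell'\in\Omega(b,b,\U\cap(Y\setminus S))$ and the \emph{fixed} class $\hat b=[\gamma_0]$, there is in general no admissible $\gamma$ with $\gamma_0^{-1}*\gamma\simeq\ell'$: such a $\gamma$ exists if and only if some lift of $\ell'$ has initial point $\hat b$, and this can genuinely fail --- in the paper's example $Y=\RR^2$, $S=\{(1/n,0)\}\cup\{(0,0)\}$, the set of initial points of lifts of a fixed $\ell'$ is countable (it is parametrized by the fibre of $\widetilde{Y\setminus S}$ over $\ell'(1/2)$) while $p^{-1}((0,0))$ is uncountable. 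Consequently, isolation of $\hat b$, which by your key claim constrains only loops of the special form $\gamma_0^{-1}*\gamma$, does not yield almost triviality of \emph{all} loops of $\Omega(b,b,\U\cap(Y\setminus S))$; your sentence ``this is exactly the condition that every loop of $\Omega(b,b,\U\cap(Y\setminus S))$ be almost trivial'' conceals an unproved surjectivity statement, and homotopy invariance of almost triviality does not supply it. (A related problem affects your $\Leftarrow$ direction: Lemma~\ref{lem:almost_homotopic_carac2} cannot be applied to the inadmissible path $\gamma_0*\ell'$, and ``the lift of $\ell'$ starting at $\hat b$'' needs both existence and identification of its endpoint. That direction, however, is repairable: lift the homotopy $\gamma_0^{-1}*\gamma\simeq\ell'$ using Lemma~\ref{lem:complete_path_carac}~(iii), note that the two boundary paths lie in the totally disconnected fibre $p^{-1}(b)$ (Corollary~\ref{cor:totdiscont}) and are therefore constant, and conclude that $\widetilde H(1,\cdot)$ is a connected lift of $\ell'$ from $\hat b$ to $[\gamma]$ inside $p^{-1}(\U)$.)

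The missing idea, which is how the paper proves $(ii)\Rightarrow(iii)$, is to avoid the basepoint $y_0$ altogether: given an \emph{arbitrary} loop $\gamma\in\Omega(b,b,\U\cap(Y\setminus S))$, lift it inside the restricted branched covering $\hat\U\to\U$, which is itself a complete spread by Lemma~\ref{lem:branched_covering_restriction}; since $\hat\U\cap p^{-1}(b)$ is a single point, both endpoints of this lift are forced to coincide, so the lift is a loop, and then your tail-joining surgery applied to this lifted loop, for every small neighbourhood $\W$ of $b$, shows that $\gamma$ is almost trivial. Replacing your final paragraph by this argument closes the gap.
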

\begin{proof}\

  \begin{itemize}

   \item $(ii)\Rightarrow (i)$. The fibers of $p$ are discrete and by Lemma \ref{lem:dense_orbits} the orbits of $\Gamma(\widetilde Y^S/Y)$ in the fibers of $p$ are dense. Therefore, $\Gamma(\widetilde Y^S/Y)$ acts transitively on the fibers.
   \item $(i)\Rightarrow (ii)$. Since $Y$ is second countable so is $Y\setminus S$; then  $Y\setminus S$ is second countable and semi-locally simply connected hence its fundamental group is countable and so is $\Gamma(\widetilde Y^S/Y)$. The hypotheses of Lemma \ref{lem:alternative_countable} are satisfied. By hypothesis, the action of $\Gamma(\widetilde Y^S/Y)$  on the fibers of $p$ is transitive, then the fibers of $p$ are discrete.
   \item $(iii)\Rightarrow (ii)$. Assume that $\ord(Y)$ is semi-locally simply connected at every point of $Y$ and consider some $y\in Y$. If $y\in \ord(Y)$, there is nothing to prove; henceforth we assume $y\notin \ord(Y)$ and consider some connected open neighborhood $\U$ of $y$ such that the constant loop at $y$ is in the adherence of every homotopy classes in $\Omega(y,\ord(Y))$ of loops in $\Omega(y,\ord(\U))$. We choose a connected component $\V$ of $p^{-1}(\U)$ and consider $x_1,x_2 \in \V$. Consider a path $\hat\gamma:[0,1]\rightarrow X$ from $x_1$ to $x_2$ such that $\hat\gamma(]0,1[)\subset \ord(X)\cap \V$. Such a path exists since $\ord(X)$ is open dense and locally connected in $X$ hence $\ord(X)\cap \V$ is open dense in $\V$ and path connected. By hypothesis, for every open subset $\W$ containing $y$ there exists Consider some open neighborhood $\W$ of $y$, by hypothesis we can choose some $\gamma_1\in \Omega(y,\W)$ homotopic to $\gamma_0=p\circ \hat\gamma$ in $\Omega(y,\ord(Y)$.  Let $H:[0,1]\times[0,1]\rightarrow Y$ be an homotopy from $\gamma_0$ to $\gamma_1$ in $\Omega(y,\ord(Y))$ the ordinary part of $H$ defined on 
   $]0,1[\times [0,1]\rightarrow Y$ lifts to $\ord(X)$ and, by Lemma \ref{lem:complete_path_carac}, this lift extends continuously to a lift $\widetilde H:[0,1]\times [0,1]\rightarrow X$ of $H$. Since the fibers of $p$ are totally disconnected and $\forall s\in [0,1], H(0,s)=H(1,s)=y$, then $\widetilde H(0,\cdot)$ and $\widetilde H(1,\cdot)$ are constant. In particular, $x_1$ and $x_2$ are in the same connected component in $p^{-1}(\W)$. Since the connected components of $p^{-1}(\W)$ generate the topology of $X$ as $\W$ go through the open of $Y$, and since $X$ is Hausdorff, then $x_1=x_2$.
   Finally, $\V\cap p^{-1}(y)$  is a singleton.

 \item $(ii)\Rightarrow (iii)$. Assume the fibers of $p$ are discrete and consider some neighborhood $\V$ of $x\in X$ such that $\V\cap p^{-1}(p(x))=\{x\}$. Let $\U$ be some connected neighborhood of $p(x)$ such that the connected component $\W$ of $x$ in $p^{-1}(\U)$ is a subset of $\V$. 
 By Lemma \ref{lem:spread_connected_image}, we have  $p(\W)=\U$; consider some path $\gamma \in \Omega(y,\U)$ and some lift $\hat\gamma : [0,1]\rightarrow X$. Since $\gamma(0)=\gamma(1)=y$ and since $\V \rightarrow \U$ is a complete spread, both $\hat\gamma(0)$ and $\hat \gamma(1)$ are in $p^{-1}(y)\cap \U$ and are thus equal to $x$ ie $\hat\gamma \in \Omega(x,\V)$. 
 Let $\W$ be some neighborhood of $y$, choose some $t_0,t_1 \in ]0,1[$ such that $\hat\gamma(]0,t_0])\subset p^{-1}(\W)$ and $\hat\gamma([t_1,1[)\subset p^{-1}(\W)$. The connected component $\hat\W$ of $x$ in $p^{-1}(\W)$ contains both pieces of $\hat\gamma$ considered and $\hat\W\cap \ord(X)$ is path connected; there thus exists a path $\omega:\hat\gamma(t_1)\rightsquigarrow \gamma(t_0)$ in $\hat\W$. Since $\ord(X)$ is the universal covering of $\ord(Y)$, it is simply connected and the loop $\gamma_{|[t_0,t_1]}\circ \omega$ is trivial. Therefore, $\hat\gamma$ is homotopic to $\hat\gamma_{[t_1,1]}\circ \omega \circ \hat\gamma_{[,t_0]}$.  Hence, $\hat\gamma$ is homotopic in $\Omega(x,\ord(X))$ to a loop in $\Omega(x,\ord(\hat\W)$. Finally, $\gamma$ is homotopic in $\Omega(y,\ord(Y))$ to a loop in $\Omega(t,\ord(\W))$. The open subset $\W$ is arbitrary, then $\ord(Y)$ is semi-locally simply connected at $y$.
 
  \end{itemize}

\end{proof}

The following Theorems are then a direct consequence of Propositions \ref{prop:Galoisian_semilocal} and \ref{prop:universality}.
\begin{thm}[Galois correspondance, general case]
 Let $Y$ be a connected, locally path connected Hausdorff topological space and let $S\subset Y$ skeletal.
 
 Every covering $X$ of $Y$ possibly branched over $S$ is isomorphic to the completion of the quotient of $\ord(\widetilde{Y}^S)$ by a subgroup of $\Gamma(\widetilde{Y}^S/Y)$. 
 
 Finally, this subgroup is normal if and only if $X$ is quasi-Galoisian.
\end{thm}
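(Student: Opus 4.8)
The plan is to reduce the entire statement to the classical Galois correspondence for ordinary coverings of the portly subset $Y\setminus S$, and to transport it through Fox completion. Throughout I assume, as is needed for $\widetilde Y^S$ to exist (Proposition~\ref{prop:universality}), that $Y\setminus S$ is semi-locally simply connected. Since $S$ is skeletal, $Y\setminus S$ is portly, hence connected in the connected space $Y$; for any branched covering $X\xrightarrow{p}Y$ possibly branched over $S$ the restriction $p^{-1}(Y\setminus S)\to Y\setminus S$ is then an honest connected covering, $p^{-1}(Y\setminus S)$ being portly in the connected $X$. The engine of the proof is the remark after Proposition~\ref{prop:universality}: Fox completion is a category equivalence between coverings of $Y\setminus S$ and coverings of $Y$ possibly branched over $S$, sending $p^{-1}(Y\setminus S)$ to $X$ and the universal cover $\widetilde{Y\setminus S}$ to $\widetilde Y^S$. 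The first thing I would record is the identification of deck groups: applying the lemma $\Gamma(X/Y)=\Gamma(p^{-1}(\U)/\U)$ with $\U=Y\setminus S$ to $\widetilde Y^S$ itself gives $\Gamma:=\Gamma(\widetilde Y^S/Y)=\Gamma(\widetilde{Y\setminus S}/(Y\setminus S))\cong\pi_1(Y\setminus S)$, since $\widetilde{Y\setminus S}$ is the universal, hence Galois, cover of $Y\setminus S$.

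For the first assertion I would take $X\xrightarrow{p}Y$ and apply classical covering theory to $p^{-1}(Y\setminus S)\to Y\setminus S$: there is a subgroup $H\leq\pi_1(Y\setminus S)=\Gamma$ and an isomorphism $p^{-1}(Y\setminus S)\cong H\backslash\widetilde{Y\setminus S}$ over $Y\setminus S$, where $\widetilde{Y\setminus S}$ is realised inside $\widetilde Y^S$ as $p^{-1}(Y\setminus S)$, so that $\ord(\widetilde Y^S)\supseteq\widetilde{Y\setminus S}$. As $\widetilde{Y\setminus S}$ is portly and $\Gamma$-invariant in $\ord(\widetilde Y^S)$, the subset $H\backslash\widetilde{Y\setminus S}$ is portly in $H\backslash\ord(\widetilde Y^S)$; by transitivity of portliness and uniqueness of completion (Theorem~\ref{theo:Fox_completion_functor}) the completions over $Y$ of $H\backslash\widetilde{Y\setminus S}$ and of $H\backslash\ord(\widetilde Y^S)$ therefore coincide. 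Since $X$ is the completion of $p^{-1}(Y\setminus S)\to Y$ by Lemma~\ref{rem:completion_covering}, I conclude that $X$ is isomorphic to the completion of $H\backslash\ord(\widetilde Y^S)\to Y$, which is exactly the first claim.

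For the normality criterion, the plan is to establish the chain: $X$ quasi-Galoisian $\iff$ $\ord(X)\to\ord(Y)$ is a Galois covering $\iff$ $p^{-1}(Y\setminus S)\to Y\setminus S$ is Galois $\iff$ $H$ is normal in $\Gamma$. The last equivalence is the classical fact that $H\backslash\widetilde{Y\setminus S}\to Y\setminus S$ is Galois precisely when $H$ is normal in $\pi_1(Y\setminus S)$. The middle equivalence holds because the two coverings are restrictions of one another over the nested portly subsets $Y\setminus S\subseteq\ord(Y)$: they share the fibre over a fixed ordinary point and, by the corollary $\Gamma(X/Y)=\Gamma(\ord(X)/\ord(Y))$, the same deck group, so transitivity on that fibre is one and the same condition. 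The backward half of the first equivalence is immediate from the definition: if $\ord(X)\to\ord(Y)$ is Galois, then $X$, being its completion (Lemma~\ref{rem:completion_covering} with $\U=\ord(Y)$), is by definition quasi-Galoisian.

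I expect the only genuine obstacle to be the forward half of the first equivalence, namely that an \emph{a priori} arbitrary Galoisian covering whose completion is $X$ must in fact be the ordinary covering of $X$. The key point I would argue is that if $q:W\to B$ is an honest covering whose completion over $Y$ is $X$, then necessarily $B\subseteq\ord(Y)$: over each $b\in B$ the map $q$ is evenly covered by a trivial family of sheets, and Fox completion adds no new points over such a point, so $b$ is an ordinary value of $X$. Hence $W\to B$ is the restriction of $\ord(X)\to\ord(Y)$ to the portly subset $B$, and, as in the previous paragraph, a Galois covering restricted to a portly connected subset of its base remains Galois with the same deck group and fibres, forcing $\ord(X)\to\ord(Y)$ to be Galois. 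Everything else is bookkeeping with the stability properties of portly subsets and the functoriality of completion furnished by Proposition~\ref{prop:universality} and Theorem~\ref{theo:Fox_completion_functor}.
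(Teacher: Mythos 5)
Your proposal is correct and takes essentially the same route as the paper, whose entire proof is the one-line assertion that the theorem is a direct consequence of Propositions \ref{prop:Galoisian_semilocal} and \ref{prop:universality}, i.e.\ transporting the classical Galois correspondence for coverings of $Y\setminus S$ through the Fox-completion category equivalence, with deck groups identified via $\Gamma(X/Y)=\Gamma(p^{-1}(\U)/\U)$. Your explicit handling of the quasi-Galoisian direction --- showing that any Galoisian covering $W\to B$ completing to $X$ must satisfy $B\subseteq\ord(Y)$ and $W=p^{-1}(B)$, via the restriction Lemma \ref{lem:branched_covering_restriction} and uniqueness of completions --- is a correct filling-in of the step the paper leaves entirely implicit.
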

\begin{thm}[Galois correspondance, good case]
 Let $Y$ be a connected, locally path connected Hausdorff topological space and let $S\subset Y$ skeletal.
 Assume $Y\setminus S$ is semi-locally simply connected in $Y$. 
 
 Then the universal covering $\widetilde Y^S$ possibly branched over $S$ is Galoisian. 

 Furthermore, every covering $X$ of $Y$ possibly branched over $S$ is isomorphic to the quotient of $\widetilde{Y}^S$ by a subgroup of $\Gamma(\widetilde{Y}^S/Y)$. 
 Finally, this subgroup is normal if and only if $X$ is Galoisian.
\end{thm}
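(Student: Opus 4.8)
The plan is to derive both Galois correspondence theorems as immediate consequences of the structural results already established, rather than proving anything from scratch. The backbone is Proposition \ref{prop:universality}, which identifies Fox completion as providing a maximal universal object $\widetilde Y^S$ together with the universality statement that any branched covering $X\xrightarrow{p} Y$ possibly branched over $S$ is dominated by $\widetilde Y^S$, and Proposition \ref{prop:Galoisian_semilocal}, which pins down exactly when $\widetilde Y^S$ is Galoisian in terms of the semi-local simple connectedness of $Y\setminus S$ \emph{in} $Y$.

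For the good case, I would first invoke Proposition \ref{prop:universality} to obtain $\widetilde Y^S$ and the universal covering morphism $\widetilde Y^S\rightarrow X$ possibly branched over $p^{-1}(S)$; under the hypothesis that $Y\setminus S$ is semi-locally simply connected in $Y$, Proposition \ref{prop:Galoisian_semilocal} gives that $\widetilde Y^S\xrightarrow{p} Y$ is Galoisian. The heart of the argument is then the standard Galois dictionary on the ordinary locus: by the Corollary following Lemma on automorphisms, $\Gamma(\widetilde Y^S/Y)=\Gamma(\ord(\widetilde Y^S)/\ord(Y))$, and since $\ord(\widetilde Y^S)$ is the universal cover of $Y\setminus S$ (up to the portly identification $Y\setminus S\subset\ord(Y)$), the usual covering-space Galois correspondence identifies intermediate coverings of $\ord(Y)$ with subgroups $H\leq \Gamma(\widetilde Y^S/Y)$. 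I would then complete: given $X$ branched over $S$, the induced covering $p^{-1}(Y\setminus S)\rightarrow Y\setminus S$ corresponds to some subgroup $H$, so $p^{-1}(Y\setminus S)\cong H\backslash \ord(\widetilde Y^S)$; applying Fox completion (Theorem \ref{theo:Fox_completion_functor}) and Lemma \ref{rem:completion_covering}, which says $X$ is the completion of its restriction to a portly set, yields $X\cong \widetilde Y^S/H$ as branched coverings. The normality clause follows because $X$ is Galoisian exactly when $H$ is normal, via the transitivity of $\Gamma(\widetilde Y^S/Y)$ on fibers together with Proposition \ref{cor:Galoisian_fiber}.

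For the general case the same route applies but one must be careful: without semi-local simple connectedness in $Y$, the completion $\widetilde Y^S$ need not be Galoisian, so one cannot write $X$ as a literal quotient $\widetilde Y^S/H$ but only as the completion of the quotient $H\backslash\ord(\widetilde Y^S)$. The correspondence on ordinary locii still holds verbatim, and Lemma \ref{lem:group_action} guarantees that the $H$-action extends to the completion; the phrase ``quasi-Galoisian'' is precisely designed to capture the case where $X$ is the completion of a Galoisian (i.e. normal-subgroup) covering, so the normality equivalence is definitional modulo the established transitivity lemmas.

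The main obstacle I anticipate is not the subgroup-to-covering assignment, which is routine covering theory on $\ord$, but rather the faithfulness of the completion step: one must check that distinct intermediate coverings of $Y\setminus S$ complete to distinct (non-isomorphic) branched coverings, and conversely that every branched covering arises this way. This is exactly where the full faithfulness in Theorem \ref{theo:Fox_completion_functor} and the uniqueness of completion in Lemma \ref{rem:completion_covering} must be deployed carefully, so that the bijection between conjugacy classes of subgroups and isomorphism classes of branched coverings is genuinely established and not merely a surjection. Everything else reduces to assembling the cited propositions, so I would keep the proof short and cite-heavy, letting Propositions \ref{prop:universality} and \ref{prop:Galoisian_semilocal} do the substantive work.
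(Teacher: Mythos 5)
Your proposal is correct and takes essentially the same route as the paper: the paper's entire proof of both correspondence theorems is the one-line remark that they are a ``direct consequence of Propositions \ref{prop:Galoisian_semilocal} and \ref{prop:universality}'', and your argument is precisely that skeleton fleshed out --- classical Galois correspondence on the ordinary locus, transported through Fox completion via Theorem \ref{theo:Fox_completion_functor} and Lemma \ref{rem:completion_covering}, with Galoisian-ness of $\widetilde Y^S$ supplied by Proposition \ref{prop:Galoisian_semilocal}. Your explicit attention to the full faithfulness of completion and to the distinction between the quotient $H\backslash\widetilde Y^S$ and the completion of $H\backslash\ord(\widetilde Y^S)$ (which is exactly what separates the good case from the general one) is more than the paper itself records, and correctly locates where the substantive content lies.
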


\begin{example}
We construct a finite Galoisian covering of the 3-sphere branched over a wild Cantor.

  Let $Y := \mathbb S^3 $ be the 3-sphere and $S$ be Antoine's Necklace \cite{general_antoine} ie a wild Cantor set. One easily checks that $Y\setminus S$ is not semi-locally simply connected in $Y$ hence  $\widetilde{Y}^S$ is not Galoisian. 
  From \cite{general_antoine}, there exists a surjective representation $\rho : \Gamma(\widetilde{Y}^S/Y) \rightarrow \mathfrak{A}_6$, its kernel $K$ corresponds to a finite Galoisian branched covering $X\xrightarrow{p} Y$ of Galois group $\mathfrak{A}_6$.
  
  Consider any neighborhood $\U$ of some $s\in S$ and $\hat\U$ a connected component of $p^{-1}(\U)$ in $X$. The restriction $\hat\U\xrightarrow{p}\U$ is a Galoisian branched covering whose Galois group is the set-wise stabilizer of $\hat\U$. An inspection of Blankinship construction shows this stabilizer is the whole group and then $\Gamma(\hat\U/\U)=\Gamma(X/Y)\simeq \mathfrak{A}_6$. As a consequence, $X\xrightarrow{p}Y$ is branched over $S$. 
  
\end{example}
The previous example is related to an open question of Montesinos on the existence of a covering of the sphere whose branched locus is a wild cantor set such that the total space  is a manifold, the conjecture state such a covering does not exists. We do not prove that the branched covering constructed is indeed a manifold, it is however a good candidate if the conjecture is false.

% 
% 
% \subsection{On covering branched over a wild cantor}
% 
% Montesinos in \cite{MR2198590} asked wether one could construct a branched covering of the sphere by itself branched over a cantor set. In order to have such a branching locus, the complement has to be non simply connected hence the considered Cantor set is wildly embedded in $\SS^3$.  
% 
% By a Theorem of ??? \cite{finite_ram_cov}, such a covering is of finite degree.
% 
% \begin{lem}
%  Let $n\in \N$ and let $M\xrightarrow{p} N$ be a branched covering of $n$-manifolds $M,N$. Let $S\subset M$ be its branching locus.
% 
%  If $p$ is finite and galoisian, then forall $\hat x\in M$, $\Gamma_x$ is non trivial if and only if $\hat x \in S$
% \end{lem}
% \begin{proof}
%  
% \end{proof}
% 
% 
% \begin{lem}Let $n\in \N$ and let $M\xrightarrow{p} N$ be a branched covering of $n$-manifolds $M,N$. Let $S\subset M$ be its branching locus.
% 
% If $p$ is finite and galoisian then there exists $\hat x\in M$ and a standard neighborhood $\U$ of $x$ such that $\forall y\in \U\cap S, \Gamma_x=\Gamma_y$.
%  
% \end{lem}
% \begin{proof}
%  Let $\hat x$ be a
% \end{proof}

\section{Singular (G,X)-manifolds}
We give ourselves some analytical structure $(G,X)$, ie a connected, locally connected, Hausdorff topological space $X$ together with a faithful action of a group $G$ by homeomorphism such that for all $\phi_1,\phi_2\in G$ and all non trivial open subset $\U\subset X$, if the action of $\phi_1$ and $\phi_1$ agree on $\U$ they agree on $X$. A $(G,X)$-manifold is a second countable Hausdorff topological space with an atlas whose of model space $X$ and whose change charts are restrictions of the action of some element of $G$. 
%The fundamental theorem of such manifolds, coming from the analyticity of the action of $G$, is that the exists a map $\D : \widetilde M \rightarrow X$ which is $\rho$-equivariant for some representation $\rho:\pi_1(M)\rightarrow G$ of the fundamental group of $M$ into $G$. Furthermore, such couples $(\d,\rho)$ only differ by conjugation by some element of  $G$. 

The simplest example of singularities occuring in the litterature of $(G,X)$-manifolds are conical singularities in homogeneous riemannian manifolds. Such manifolds with conical singularities have an underlying metric space structure that gives a natural notion of isomorphism class. In a way, the author feels that such an underlying structure distracts from the fact that the notion of isomorphism in singular $(G,X)$-manifold does not depend is actually more general. The language we present in this section ease the analysis of  singularities in manifolds in particular those which do not admit a natural metric structure such as Lorentzian or projective $(G,X)$-manifolds. As illustration of the richness of non metric singularities and their usefulness we wish to refer for instance to \cite{Particules_1} for a fundamental study of "natural" 2+1 Lorentzian singularities or \cite{MR3190306,MR3814342} for their role in geometric transitions. 

We begin by defining a notion of singular $(G,X)$-manifolds, which encompass most singularities of the litterature, and we show some elementary properties ensuring the theory of such manifold works well. The we generalize the developpement Theorem of $(G,X)$-manifolds to singular $(G,X)$-manifolds.

\subsection{Preliminaries on singular $(G,X)$-manifolds}
A $(G,X)$-preatlas of $N$ is a $(G,X)$-atlas on an opens subset $M\subset N$.
\begin{defi} A subset $\U$ of a topological space $M$ is  portly if it is open, dense and locally connected in $M$.
 
\end{defi}

\begin{defi} An almost everywhere $(G,X)$-atlas on a topological space $M$ is a $(G,X)$-atlas defined on some portly subset of $M$. 
\end{defi}

    \begin{lem} \label{lem:GX_atlas_union}
        Let $N$ be a locally connected topological space.
        Let $(\B_k)_{k\in K}$ a family of a.e.$(G,X)$-atlases on $N$. Assume for  all $(k,k')\in K^2$, there exists an a.e $(G,X)$-atlas $\mathcal A$ such that 
        \begin{itemize}
         \item the support of $\A$ is included in both supports of $\B_{k}$ and $\B_{k'}$;
         \item $\A$ is thinner than both  restrictions of $\B_{k}$ and $\B_{k'}$.
        \end{itemize}

            Then the union $\bigcup_{k\in K} \B_k$ is an a.e.$(G,X)$-atlas.
    \end{lem}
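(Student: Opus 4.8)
The plan is to verify the two defining properties of an a.e.\ $(G,X)$-atlas for $\B:=\bigcup_{k\in K}\B_k$: that its support is portly in $N$, and that any two of its charts are compatible (i.e.\ their transition map is, on each connected component of the overlap, the restriction of a single element of $G$). The support is the easy part; the compatibility is where the hypotheses on the intermediate atlases and on $(G,X)$ do the work.

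Write $M_k:=\supp(\B_k)$, a portly subset of $N$. The support of $\B$ is $\bigcup_k M_k$, an open set containing the portly subset $M_{k_0}$ for any fixed $k_0$; by the portliness properties recalled in the preliminaries (if $\U\subset\V$ with $\U$ portly and $\V$ open then $\V$ is portly), $\bigcup_k M_k$ is portly in $N$. This settles the support. For compatibility, fix $\phi\in\B_k$ with domain $U$ and $\psi\in\B_{k'}$ with domain $V$; when $k=k'$ there is nothing to prove since $\B_k$ is an atlas, so I invoke the hypothesis in general: there is an a.e.\ $(G,X)$-atlas $\A$ with support $M_\A\subseteq M_k\cap M_{k'}$ that is thinner than both $\B_k|_{M_\A}$ and $\B_{k'}|_{M_\A}$. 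Thinness means the charts of $\A$ are compatible with those of $\B_k$ and of $\B_{k'}$, so composing the transition maps $\psi\circ\theta^{-1}$ and $\theta\circ\phi^{-1}$ through a chart $\theta$ of $\A$ shows that $\psi\circ\phi^{-1}$ is locally the action of an element of $G$ at every point of $\phi(U\cap V\cap M_\A)$. Since $M_\A$ is dense, $U\cap V\cap M_\A$ is a dense open subset of $U\cap V$.

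The heart of the argument is to propagate this from the dense locus to the whole overlap, and this is where portliness of $M_\A$ is essential. Given $p\in U\cap V$, I use local connectedness of $N$ to choose a connected open $W$ with $p\in W\subseteq U\cap V$. Since $M_\A$ is portly, hence locally connected in $N$, the set $W\cap M_\A$ is connected, and it is dense open in $W$. On the connected open set $\phi(W\cap M_\A)$ the map $\psi\circ\phi^{-1}$ is locally in $G$; the analyticity hypothesis on $(G,X)$ — two elements of $G$ agreeing on a nonempty open set coincide — lets one run the standard clopen argument to conclude that $\psi\circ\phi^{-1}$ agrees there with a single $g\in G$. As $g$ and $\psi\circ\phi^{-1}$ are continuous, $\phi(W\cap M_\A)$ is dense in $\phi(W)$, and $X$ is Hausdorff, the two maps agree on all of $\phi(W)$; thus $\psi\circ\phi^{-1}$ is locally in $G$ at $\phi(p)$ as well. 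As $p\in U\cap V$ was arbitrary, $\psi\circ\phi^{-1}$ is locally in $G$ at every point of $\phi(U\cap V)$, and a last application of the same clopen argument on each connected component $C$ of $U\cap V$ shows $\psi\circ\phi^{-1}$ is the restriction of one element of $G$ on $\phi(C)$. The charts being pairwise compatible and covering the portly set $\bigcup_k M_k$, the union $\B$ is an a.e.\ $(G,X)$-atlas.

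The only delicate point, and the one I expect to be the main obstacle, is exactly this extension across the singular locus $N\setminus M_\A$: a priori the transition is only controlled on the dense part $M_\A$, and the local $G$-elements could in principle differ from component to component of $W\cap M_\A$. The local connectedness built into portliness (forcing $W\cap M_\A$ connected for $W$ connected) together with the rigidity of the $G$-action is precisely what rules this out and pins down a single $g$ near each point.
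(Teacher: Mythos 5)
Your proof is correct and follows essentially the same route as the paper's: mediate between the two charts through a chart of the intermediate atlas $\A$ to get local elements of $G$ on the portly overlap, use local constancy together with the connectedness of $W\cap M_\A$ (which is exactly what the local connectedness clause of portliness provides) to pin down a single $g\in G$, and then extend the identity $\psi\circ\phi^{-1}=g$ to the whole overlap by density, continuity and Hausdorffness of $X$. The only additions relative to the paper — checking that the union of the supports is portly and the final passage to connected components of $U\cap V$ — are minor bookkeeping the paper leaves implicit.
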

					\begin{proof}
					Let $k,k'\in K$ and write $\B_k=(\U_i,\V_i,\phi_i)_{i\in I(k)}$ and
					$\B_{k'}=(\U_i,\V_i,\phi_i)_{i\in I(k')}$.
					Let $i\in I(k), j\in I(k')$ such that $\U_i\cap \U_j \neq \emptyset$ and let $\widehat\W \subset\U_i\cap\U_j$ be an open connected non empty subset.
					Let $\A$ be an a.e.$(G,X)$-atlas defined on some portly subset $M\subset N$ which is thinner that both $\B_{k|M}$ and $\B_{k'|M}$.
					
					\begin{itemize}
					 \item Consider $p\in \widehat\W\cap M$ and a chart $(\U,\V,\phi)$ of $\A$ around $p$
					and let $\W \subset \widehat \W\cap \U$ be an open connected neighborhood of $p$.
					there exists $g,h\in G$ such that
					$$\phi_{|\W} =g\circ \phi_{i|\W} \quad \quad  \phi_{|\W} =h\circ \phi_{j|\W},$$
					hence
					$$\phi_{j|\W}=(h^{-1} g)\circ \phi_{i|\W}.$$
					 Define $g_{p,\U,\W}:=h^{-1}g$ so that $\phi_{j|\W}= g_{p,\U,\W}\circ \phi_{i|\W}$.
				
					\item  We show that $g_{p,\U,\W}$ depends neither on $\U$ nor on $\W$ nor on $p$.
					
					Fixing $p\in \widehat \W\cap M$, consider
					 $(\U,\V,\phi)$ and $(\U',\W',\phi')$ two charts of $\A$ on a neighborhood of $p$ 
					as well as $\W$ and $\W'$ two open connected neighborhoods of  $p$ in $\widehat \W \cap \U$ and $\widehat \W \cap \U'$ respectively.
					The restrictions of $g_{p,\U,\W}$ and $g_{p,\U',\W'}$ to $\phi_i(\W\cap \W')\subset X$ thus $g_{p,\U,\W}=g_{p,\U',\W'}$.
					and $g_p:=g_{p,\U,\W}$ only depends on $p$. 
					Furthermore, if $q\in \W$, we prove the same way that
					$g_{q,\U,\W}=g_{p,\U,\W}$.
					Finally, the map $\W\cap M\rightarrow G, p\mapsto g_p$ is locally constant.
					Since $M$ is portly and since $\widehat \W$ is connected, the intersection $\widehat \W \cap M$ is connected and $p\mapsto g_p$ is constant on $\widehat \W\cap M$.
					
					\item We proved there exists $g\in G$  such that
					$$\forall x\in M\cap \widehat \W, \quad  \phi_j(x)=g\circ \phi_i(x)$$
					Since $\phi_i,\phi_j$ and $g$ are continuous, $X$ is Hausdorff and $M$ is dense, 
					the intersection $\widehat \W\cap M$ is then dense in 
					$\widehat \W$ and
					$$\phi_{j|\widehat\W}=g\circ \phi_{i|\widehat\W}.$$
					\end{itemize}
					Finally, $\B_k\cup \B_{k'}$ is an a.e. $(G,X)$-atlas, moreover  $k,k'\in K$ are arbitrary thus $\bigcup_{k\in K} \B_k$ is an a.e. $(G,X)$-atlas.
					
					\end{proof}

					\begin{cor} \label{cor:maximal_atlas}
					Let $M$ be a locally connected topological space. 
					Consider the set $\mathcal E$ of couples $(\U,\mathcal A)$ where $\mathcal A$ is an a.e $(G,X)$-structure defined on $\U\subset M$. 
					The set $\mathcal E$ is ordered in the following way $(\U,\mathcal A) \leq (\U',\mathcal A')$ if $\U\subset \U'$ and $\mathcal A'_{|\U}$ is thinner that $\mathcal \A$. 
					
					Then, for every $(\U,\mathcal A) \in \mathcal E$ the set of elements of $\mathcal E$ greater than $(\U,\mathcal A)$ has a maximum.
					\end{cor}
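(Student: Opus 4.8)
The plan is to realize the claimed maximum as the union of \emph{all} a.e.\ $(G,X)$-atlases lying above $(\U,\A)$, and to show this union is again an element of $\mathcal E$ by invoking Lemma~\ref{lem:GX_atlas_union}. Concretely, let $(\B_k)_{k\in K}$ be the family of all a.e.\ $(G,X)$-atlases $\B$ on $M$ with $(\supp\B,\B)\geq(\U,\A)$; this family is nonempty since $(\U,\A)$ itself belongs to it. Every such $\B_k$ has support $\supp\B_k\supseteq\U$ and restricts on $\U$ to an atlas finer (``thinner'') than $\A$.

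First I would verify the pairwise hypothesis of Lemma~\ref{lem:GX_atlas_union} for this family. Fix $k,k'\in K$ and set $\mathcal C:=\B_{k|\U}\cup\B_{k'|\U}$. Since both $\B_{k|\U}$ and $\B_{k'|\U}$ are thinner than $\A$, and since the charts of $\A$ cover $\U$, any chart of $\B_{k|\U}$ and any chart of $\B_{k'|\U}$ have, on each connected component of their overlap, a transition map that can be computed through a chart of $\A$; by the rigidity of the analytic datum $(G,X)$ (two elements of $G$ agreeing on a nonempty open subset of $X$ are equal) this transition is the restriction of a single element of $G$, exactly as in the computation carried out in the proof of Lemma~\ref{lem:GX_atlas_union}. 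Hence $\mathcal C$ is an a.e.\ $(G,X)$-atlas whose support $\U$ is contained in $\supp\B_k\cap\supp\B_{k'}$, and $\mathcal C$ is thinner than both $\B_{k|\U}$ and $\B_{k'|\U}$ because it contains them. Thus the hypothesis of Lemma~\ref{lem:GX_atlas_union} holds.

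Applying Lemma~\ref{lem:GX_atlas_union}, the union $\B_\infty:=\bigcup_{k\in K}\B_k$ is an a.e.\ $(G,X)$-atlas. Its support $\U_\infty:=\bigcup_k\supp\B_k$ is a union of portly subsets of $M$, hence portly, so $(\U_\infty,\B_\infty)\in\mathcal E$. Finally I would check that $(\U_\infty,\B_\infty)$ is the sought maximum: for every $k$ one has $\supp\B_k\subseteq\U_\infty$ and $\B_{\infty|\supp\B_k}\supseteq\B_k$, so $\B_{\infty|\supp\B_k}$ is thinner than $\B_k$ and $(\supp\B_k,\B_k)\leq(\U_\infty,\B_\infty)$. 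Taking $k$ to index $(\U,\A)$ shows $(\U_\infty,\B_\infty)\geq(\U,\A)$, so it lies in the up-set of $(\U,\A)$, while the preceding inequality shows it dominates every member of that up-set; hence it is the greatest element.

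The main obstacle is the pairwise compatibility step: one must be sure that two atlases which are each thinner than $\A$ are mutually compatible, i.e.\ that $\B_{k|\U}\cup\B_{k'|\U}$ really is an atlas. This does \emph{not} follow formally from Lemma~\ref{lem:GX_atlas_union}, whose hypothesis demands a common \emph{finer} atlas, whereas here $\A$ is the coarser one; it must instead be extracted directly from the faithfulness/rigidity axiom of $(G,X)$, reusing the transition-map argument (the locally constant, then constant, element $g$ on connected overlaps) from that lemma's proof. Everything else is bookkeeping about the order relation and the stability of portliness under arbitrary unions of open sets.
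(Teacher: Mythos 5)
Your global strategy coincides with the paper's: take the union of all a.e.\ $(G,X)$-atlases above $(\U,\A)$ and invoke Lemma~\ref{lem:GX_atlas_union}, the pairwise hypothesis being supplied by the base atlas $\A$. Your discussion of the pairwise step is also sound, and in fact more careful than the paper's one-line proof: as you note, the lemma literally asks for a common \emph{finer} mediating atlas, while here $\A$ sits on the coarse side; but the lemma's proof only uses that the mediator's charts are $G$-compatible with those of both atlases on connected overlaps, a symmetric condition, so rerunning the locally-constant-transition argument with $\A$ as mediator is exactly the right patch.

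The genuine gap is elsewhere: you conflate the ambient set $\U'$ of an element $(\U',\A')\in\mathcal E$ with the support of its atlas. By the paper's definitions, an a.e.\ $(G,X)$-structure on $\U'$ is an atlas defined on a portly \emph{subset} of $\U'$, and the whole point of the corollary (used right after to define singular $(G,X)$-manifolds) is that $\U'$ may contain points carrying no chart at all. Two consequences. First, your family $\{\B : (\supp\B,\B)\geq(\U,\A)\}$ need not contain $(\U,\A)$: that requires $\U\subset\supp\A$, which fails as soon as $\A$ has a genuine singular point in $\U$ (e.g.\ $\U=\RR^2$ and $\A$ a Euclidean cone atlas on $\RR^2\setminus\{0\}$); in that case your family is empty and the construction collapses. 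Second, even when your family is nonempty, your candidate has first coordinate $\U_\infty=\bigcup_k\supp\B_k$, whereas a maximum of the up-set must have first coordinate containing every $\U'$; take $M=\RR^2$, $\U=\RR^2\setminus\{0\}$, and $\A$ a cone atlas with $\supp\A=\U$: then $\U_\infty=\U$, yet $(\RR^2,\A)$ lies in the up-set and is not dominated by $(\U_\infty,\B_\infty)$, since $\RR^2\not\subset\U_\infty$. The repair is to run your argument over the full up-set and take as maximum $\left(\bigcup_{(\U',\A')\geq(\U,\A)}\U',\ \bigcup_{(\U',\A')\geq(\U,\A)}\A'\right)$: Lemma~\ref{lem:GX_atlas_union} still makes the union of the atlases an atlas, and its support is portly in $\bigcup\U'$ by the paper's ``locally portly subsets are portly'' property, so the resulting couple does dominate every element of the up-set, ambient points included.
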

                    \begin{proof}
                      By Lemma \ref{lem:GX_atlas_union}, it suffices to take the union of all the a.e. $(G,X)$-atlases greater than the given $(\U,\mathcal A)$.
                    \end{proof}					
                    The above Corollary justifies the importance of portly subsets and the choice of terminology "almost everywhere". The $(G,X)$-structure on a "negligible" set is can be reconstructed from its complement.
                    \begin{defi} An a.e. $(G,X)$-structure on a topological space $M$ is  a.e. $(G,X)$-atlas on $M$.                     
                    \end{defi}
                    
					\begin{defi} A singular $(G,X)$-manifold is a locally connected $T_2$ second countable topological space $M$ endowed with an a.e. $(G,X)$-structure maximal in the sense of Corollary \ref{cor:maximal_atlas}.
					\end{defi}

					\begin{defi}[a.e.$(G,X)$-morphism] Let
					$M$ and $N$ be two singular $(G,X)$-manifolds.

					An a.e.morphism $\phi:M\rightarrow N$ is a continuous map such that there exists portly subsets $\U\subset \reg(M)$ and $\V\subset \reg(N)$ such that
					$$\phi_{|\U}^{|\V}:(\U,\A_\U)\rightarrow (\V,\B_\V)$$ is a $(G,X)$-morphism.
					\end{defi}

					\begin{rem} Composition of a.e. $(G,X)$-morphisms is an a.e. $(G,X)$-morphisms. The category $\CatSingGX$ of singular $(G,X)$-manifold which morphisms are a.e $(G,X)$-morphisms is then well defined. 
					An isomorphism of singular $(G,X)$-manifolds is then a homeomorphic a.e. $(G,X)$-morphism.
					\end{rem}

                    \begin{rem} A.e. $(G,X)$-morphisms are analytical: if two are equal on an open set, they are equal everywhere.
                     
                    \end{rem}

					\begin{rem}\label{prop:GX_sing_morph_local}
					Let $M\rightarrow N$ a continuous map between singular $(G,X)$-manifolds.
					The following propertie are equivalent.
					\begin{enumerate}[(i)]
					 \item $f$ is an a.e. $(G,X)$-morphism;
					 \item $f$ is locally an a.e. $(G,X)$-morphism ;
					 \item $f_{|\U}$ is an a.e. $(G,X)$-morphisms for some portly set $\U\subset M$.
					\end{enumerate}
					\end{rem}

                \begin{lem} 
                   Let $M$ and $N$ be singular $(G,X)$-manifolds and let $f:M\rightarrow N$ an a.e $(G,X)$-morphism. 
                   The set $\mathcal E$ of open subset $\U$ such that $\U\subset \reg(M)$ and $f$ induces a $(G,X)$-morphism $\U\rightarrow \reg(N)$.   
                   
                   Ordered by the inclusion, $\mathcal E$ has a maximum.
                    
                \end{lem}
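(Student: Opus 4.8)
The plan is to exhibit the maximum of $\mathcal E$ explicitly as the union of all its members, and to justify membership of this union by appealing to the fact that the $(G,X)$-morphism condition is local. First I would record that $\mathcal E$ is nonempty: since $f$ is an a.e. $(G,X)$-morphism, by definition there exist portly subsets $\U\subset \reg(M)$ and $\V\subset \reg(N)$ with $f_{|\U}^{|\V}$ a $(G,X)$-morphism, so $\U\in \mathcal E$. Set $\U_0:=\bigcup_{\U\in\mathcal E}\U$. This is open as a union of open sets, it is contained in $\reg(M)$, and $f(\U_0)=\bigcup_{\U\in\mathcal E}f(\U)\subset \reg(N)$ because each $f(\U)\subset \reg(N)$. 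It then suffices to show that $f$ induces a $(G,X)$-morphism $\U_0\rightarrow \reg(N)$, for this yields $\U_0\in\mathcal E$, and since $\U\subset\U_0$ for every $\U\in\mathcal E$, the set $\U_0$ is the desired maximum.

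The heart of the argument is that being a $(G,X)$-morphism between the honest $(G,X)$-manifolds $\reg(M)$ and $\reg(N)$ is a local property: a continuous map is a $(G,X)$-morphism precisely when each point admits a neighborhood on which it is one, since the defining chart condition (that in suitable charts the map agree with the restriction of some element of $G$) is verified pointwise. Now for each $x\in\U_0$ there is some $\U\in\mathcal E$ with $x\in\U$, and $f$ is a $(G,X)$-morphism on $\U$, hence in a neighborhood of $x$. By locality, $f_{|\U_0}$ is a $(G,X)$-morphism $\U_0\rightarrow \reg(N)$, so $\U_0\in\mathcal E$.

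I expect the main obstacle to be only the verification that the local chart descriptions are coherent on overlaps $\U_i\cap\U_j$, namely that no competing data arise there. Here this is automatic, in contrast with the gluing of atlases in Lemma \ref{lem:GX_atlas_union}: on such an overlap both $\U_i$ and $\U_j$ realise the \emph{same} map $f$ as a morphism, so there is nothing to reconcile, and the analyticity of the $(G,X)$-structure (two morphisms agreeing on a nonempty open set agree on its connected component) ensures that the germ of the $G$-element representing $f$ at each point is unambiguous. Thus the locality argument applies cleanly, and $\U_0$ is the sought maximum of $\mathcal E$.
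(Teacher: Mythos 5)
Your proposal is correct and takes essentially the same route as the paper: the paper also defines $\U_0:=\bigcup_{\U\in\mathcal E}\U$ and observes that $f_{|\U_0}^{|\reg(N)}$ is locally a $(G,X)$-morphism, hence a $(G,X)$-morphism, so $\U_0\in\mathcal E$ is the maximum. Your additional remarks (nonemptiness of $\mathcal E$ and the automatic coherence on overlaps, since all members restrict the single map $f$) are sound elaborations of the same argument.
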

                \begin{proof}
                  Let $\U_0:=\bigcup_{\U\in \mathcal E}\U$, the map $f_{|\U_0}^{|\reg(N)}$ is locally a $(G,X)$-morphism hence a $(G,X)$-morphism. Finally, $\U_0\in \mathcal E$ and is certainly the maximum.
                \end{proof}

                \begin{defi} Let $M$ and $N$ be singular $(G,X)$-manifolds and let $f:M\rightarrow N$ an a.e $(G,X)$-morphism.
                The maximal open subset of $M$ on which $f$ induces a $(G,X)$-morphism is the regular locus of $f$ which we denote by $\reg(f)$.
                \end{defi}

                    \begin{prop}
                     Let $M$ be a singular $(G,X)$-manifold and let $f:M\rightarrow X$ an a.e $(G,X)$-morphism. Then, 
                     $\reg(f)=\reg(M)$
                    \end{prop}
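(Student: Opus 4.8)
The plan is to prove the two inclusions separately. The inclusion $\reg(f) \subset \reg(M)$ is immediate from the construction in the preceding Lemma: $\reg(f)$ is the union of the members of $\mathcal E$, and every element of $\mathcal E$ is by definition contained in $\reg(M)$. The substance of the statement lies entirely in the reverse inclusion $\reg(M) \subset \reg(f)$, and the crucial feature I would exploit is that the target is the model space $X$, which is regular at every point (its singular locus is empty and $\reg(X) = X$). Since $\reg(f)$ is open and being a $(G,X)$-morphism is a local property (as already used to show $\mathcal E$ has a maximum), it suffices to produce, for each $p \in \reg(M)$, a connected chart domain $\W \subset \reg(M)$ on which $f$ restricts to a genuine $(G,X)$-morphism $\W \to X$; such $\W$ cover $\reg(M)$ and lie in $\reg(f)$.

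So I would fix $p \in \reg(M)$ together with a connected $(G,X)$-chart $\psi : \W \to X$, with $p \in \W \subset \reg(M)$ and $\psi$ a homeomorphism onto an open subset of $X$. Let $\U \subset \reg(M)$ be a portly subset witnessing that $f$ is an a.e. $(G,X)$-morphism, so that $f_{|\U}$ is an honest $(G,X)$-morphism into $\reg(X) = X$. A routine application of the portly properties shows $\U \cap \W$ is portly in $\W$, hence connected since $\W$ is connected; therefore $\psi(\U \cap \W)$ is a connected open subset of $X$. Reading $f_{|\U \cap \W}$ through the source chart, the map $f \circ \psi^{-1}$ is a $(G,X)$-morphism between open subsets of $X$, and by the rigidity of the $(G,X)$-structure (two elements of $G$ agreeing on a nonempty open set coincide) such a morphism on a connected domain is the restriction of a single $g \in G$. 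Thus $f = g \circ \psi$ on $\U \cap \W$.

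To conclude I would propagate this identity from the dense set $\U \cap \W$ to all of $\W$: both $f$ and $g \circ \psi$ are continuous on $\W$, they agree on the dense subset $\U \cap \W$, and $X$ is Hausdorff, so $f = g \circ \psi$ on all of $\W$. Since $g \circ \psi$ is a homeomorphism of $\W$ onto an open subset of $X$ obtained from the chart $\psi$ by post-composition with an element of $G$, it is by definition a $(G,X)$-morphism $\W \to X$. Hence $\W \subset \reg(f)$, and letting $p$ range over $\reg(M)$ yields $\reg(M) \subset \reg(f)$, which combined with the first inclusion gives equality.

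The point I would be most careful to make explicit is \emph{why} the argument needs the target to be $X$ rather than an arbitrary singular $(G,X)$-manifold $N$. For a general $N$ the extension $g \circ \psi$ need not land in a single chart: a regular point of $M$ lying outside $\U$ could be sent by $f$ into $\sing(N)$, and then $f_{|\W}$ would fail to be a $(G,X)$-morphism there, so $\reg(f)$ could be strictly smaller than $\reg(M)$. Because the target $X$ is regular everywhere, no such obstruction occurs and the density-plus-continuity argument alone forces $f_{|\W}$ to be a $(G,X)$-morphism. This is the one step that genuinely requires the hypothesis, and it is where I expect the only real subtlety to be.
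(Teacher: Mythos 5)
Your proof is correct and follows essentially the same route as the paper's: fix a connected chart around a regular point of $M$, use connectedness of the chart's intersection with a portly set (you use the witness $\U$ from the definition of a.e.\ morphism, the paper uses $\reg(f)$ itself) to obtain a single $g\in G$ agreeing with $f$ there, then extend the identity $f=g\circ\psi$ to the whole chart by density, continuity and Hausdorffness of $X$. The closing remark on why the target must be $X$ rather than a general singular $(G,X)$-manifold is accurate and matches the role of the paper's subsequent proposition on local homeomorphisms.
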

                    \begin{proof}
                     Let $x\in \reg(M)$ and let $(\U,\V,\varphi)$ be a connected chart around $x$. The map $f\circ\varphi^{-1}$ induces a $(G,X)$-morphisms from $\V\cap \varphi(\reg(f))$ to $X$. Since $\reg(f)$ is locally connected in $M$ and $\U$ is connected, $\V\cap \varphi(\reg(f))$ is also connected and there exists a unique $g\in G$ such that $$\forall y\in \V\cap \varphi(\reg(f)),\quad  f\circ \varphi^{-1}(y)=g(y).$$
                     Since $\varphi(\Reg(f))$ is dense in $\V$ and $f\circ \varphi^{-1}$ is continuous, we have
                     $$\forall y\in \V,\quad  f\circ \varphi^{-1}(y)=g(y).$$
                     Finally, $f$ is a regular on $\U$. The result follows.
                    \end{proof}

                \begin{prop}{}\label{prop:GX_sing_homeo_reg}
						 Let $M$ and $N$ be singular $(G,X)$-manifolds and let $f:M\rightarrow N$ an a.e $(G,X)$-morphism. Then,					
					$$\reg(f)=\reg(M)\cap \{x\in M  ~|~ f \text{ is a local homeomorphism around }x\}$$
					\end{prop}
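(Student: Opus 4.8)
The plan is to establish the two inclusions separately. The inclusion $\reg(f)\subset \reg(M)\cap\{x\in M \mid f \text{ is a local homeomorphism around } x\}$ is the easy one: by definition $\reg(f)\subset \reg(M)$, and on $\reg(f)$ the map $f$ is a genuine $(G,X)$-morphism into $\reg(N)$, so in suitable charts $(\U,\varphi)$ of the source and $(\mathcal O,\chi)$ of the target it reads $f=\chi^{-1}\circ h\circ \varphi$ for some $h\in G$. Since $h$ acts by a homeomorphism of $X$ and $\varphi,\chi$ are homeomorphisms onto open subsets of $X$, the map $f$ is a local homeomorphism around each point of $\reg(f)$, which gives the inclusion.

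For the reverse inclusion I would fix $x\in \reg(M)$ around which $f$ is a local homeomorphism and produce a chart exhibiting $x\in \reg(f)$. First choose a connected chart $(\U_x,\varphi)$ of $\reg(M)$ at $x$, shrunk so that $f_{|\U_x}$ is a homeomorphism onto a connected open set $W:=f(\U_x)\subset N$. Let $\U\subset\reg(M)$ be a portly subset witnessing that $f$ is an a.e. $(G,X)$-morphism, so that $f_{|\U}$ is a genuine $(G,X)$-morphism into $\reg(N)$. Then $\U\cap\U_x$ is portly in $\U_x$, hence, $f_{|\U_x}$ being a homeomorphism, $f(\U\cap\U_x)$ is portly in $W$ and contained in $\reg(N)$. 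The candidate chart on $W$ is $\psi:=\varphi\circ(f_{|\U_x})^{-1}:W\to X$, a homeomorphism onto $\varphi(\U_x)$, i.e. a genuine single-chart $(G,X)$-atlas on $W$.

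The crux is to prove $W\subset\reg(N)$, which I would do by comparing $\psi$ with the structure of $N$ on the portly subset $f(\U\cap\U_x)$ and then invoking the gluing lemma. For $y\in f(\U\cap\U_x)$ with a chart $(\mathcal O,\chi)$ of $\reg(N)$, the morphism property of $f$ on $\U$ gives $\chi\circ f\circ\varphi^{-1}=h$ near the relevant point for some $h\in G$; hence $\chi\circ\psi^{-1}=\chi\circ f_{|\U_x}\circ\varphi^{-1}=h$, so the transition between $\psi$ and the charts of $N$ lies in $G$ throughout $f(\U\cap\U_x)$. Thus the restriction of the a.e. structure of $N$ to $f(\U\cap\U_x)$ is a common refinement, supported on a portly subset, of the one-chart atlas $\{(W,\psi)\}$ and of the a.e. structure of $N$ on $W$. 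Lemma \ref{lem:GX_atlas_union} then makes their union an a.e. $(G,X)$-atlas on $W$, which by maximality (Corollary \ref{cor:maximal_atlas}) already belongs to the structure of $N$. Since the chart $\psi$ covers all of $W$, every point of $W$ is regular, i.e. $W\subset\reg(N)$. Consequently $f_{|\U_x}:\U_x\to W\subset\reg(N)$ reads $\psi\circ f_{|\U_x}\circ\varphi^{-1}=\mathrm{id}\in G$, so $f$ induces a $(G,X)$-morphism near $x$ and $x\in\reg(f)$.

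I expect the main obstacle to be precisely this last step: upgrading the compatibility of $\psi$ with the charts of $N$ from the dense (portly) set $f(\U\cap\U_x)$ to all of $W$. On the dense set one only controls the transition functions pointwise, and it is the gluing Lemma \ref{lem:GX_atlas_union} together with the analyticity built into the maximal a.e. structure that propagates regularity to the whole of $W$; keeping track of which atlas refines which on $f(\U\cap\U_x)$ is the delicate bookkeeping that the argument hinges on.
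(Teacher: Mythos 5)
Your proof is correct and follows essentially the same route as the paper's: the easy inclusion via the fact that $(G,X)$-morphisms are local homeomorphisms, and the reverse inclusion by pushing the chart structure of $M$ forward through the local homeomorphism, checking compatibility with the structure of $N$ on a portly subset, and invoking maximality to conclude that the image neighborhood lies in $\reg(N)$. The only (cosmetic) difference is that you push forward a single chart $\psi=\varphi\circ(f_{|\U_x})^{-1}$ and cite Lemma~\ref{lem:GX_atlas_union} and Corollary~\ref{cor:maximal_atlas} explicitly, whereas the paper pushes forward the whole atlas of $M_0$ and appeals to the same gluing/maximality mechanism implicitly.
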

					\begin{proof}
					By definition, $\reg(f)\subset \reg(M)$.
					Consider $x\in \reg(f)$  since $f$ is a $(G,X)$-morphism on a neighborhood of $x$, in particular it is a local homeomorphism. 
					Hence, $\reg(f)\subset\reg(M)\cap \{x\in M  ~|~ f \text{ is a local homeomorphism around }x\}$.
					Consider $x\in \reg(M)$ around which $f$ is a local homeomorphism and consider an open neighborhood $M_0\subset \reg(M)$ and an open neighborhood $N_0\subset \reg(N)$ such that $g:=f_{|M_0}^{|N_0}$ is an homeomorphism. Without loss of generality, we can assume $M_0\subset \reg(M)$.
					On the one hand, $N_0\cap \reg(N)$ comes with a  $(G,X)$-atlas induced by the $(G,X)$-structure on $\reg(N)$. On the other hand, another $(G,X)$-atlas is given by the pushforward by $g$ of the $(G,X)$-atlas of $M_0$.  
					Since $g$ is a $(G,X)$-morphism from a portly subset of $M_0$ to a portly subset of $N_0$ for both structures, they are then compatible. Furthermore, $g$ is $(G,X)$-morphism for the pushforward $(G,X)$-structure on $N_0$. Hence, the $(G,X)$-atlas of $\reg(N)$ extends to $\reg(N)\cup N_0$ and $g$ is a $(G,X)$-morphism. By maximality, $\reg(N)$ then contains $N_0$ and $f$ induces a $(G,X)$-morphism $M_0\rightarrow N_0$. Then, $M_0\subset \reg(f)$.
					\end{proof}

					\begin{cor}
					   Both regular and singular locii are preserved by the a.e. $(G,X)$-morphisms which are local homeomorphisms.
					\end{cor}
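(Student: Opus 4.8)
The plan is to deduce the statement directly from Proposition~\ref{prop:GX_sing_homeo_reg}. Concretely, I read the claim as asserting that for an a.e. $(G,X)$-morphism $f:M\rightarrow N$ which is a local homeomorphism at every point, one has $f(\reg(M))\subset\reg(N)$ and $f(\sing(M))\subset\sing(N)$, equivalently $\reg(M)=f^{-1}(\reg(N))$. I would prove the two inclusions separately, the second by applying the first to local inverses of $f$.

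First I would establish the forward inclusion. Since $f$ is a local homeomorphism at every $x\in M$, the set $\{x\in M ~|~ f \text{ is a local homeomorphism around } x\}$ is all of $M$, so Proposition~\ref{prop:GX_sing_homeo_reg} gives $\reg(f)=\reg(M)$. By the definition of $\reg(f)$, the map $f$ induces a genuine $(G,X)$-morphism $\reg(M)\rightarrow\reg(N)$; in particular $f(\reg(M))\subset\reg(N)$, which is the preservation of the regular locus.

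For the reverse inclusion, i.e. the preservation of the singular locus $f^{-1}(\reg(N))\subset\reg(M)$, I would use that a local homeomorphism is locally invertible and that its local inverse is again an a.e. $(G,X)$-morphism which is a local homeomorphism. Given $x\in M$ with $f(x)\in\reg(N)$, choose an open neighborhood $\U$ of $x$ that $f$ maps homeomorphically onto an open neighborhood $\V\subset\reg(N)$ of $f(x)$. The subset $\U\cap\reg(M)$ is portly in $\U$, and on it $f$ restricts to a $(G,X)$-morphism into $\reg(N)$ by the previous paragraph; since a homeomorphism carries portly subsets to portly subsets, the inverse homeomorphism $g:=(f_{|\U})^{-1}:\V\rightarrow\U$ is a $(G,X)$-morphism on the portly image $f(\U\cap\reg(M))\subset\V$, hence an a.e. $(G,X)$-morphism and itself a local homeomorphism. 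Applying the forward inclusion to $g$, and using $\reg(\V)=\V$ because $\V\subset\reg(N)$, yields $g(\V)\subset\reg(\U)=\reg(M)\cap\U$. As $g(\V)=\U$, this forces $\U\subset\reg(M)$, so $x\in\reg(M)$. Combining the two inclusions gives $\reg(M)=f^{-1}(\reg(N))$ and therefore $\sing(M)=f^{-1}(\sing(N))$, whence $f(\sing(M))\subset\sing(N)$.

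The whole argument is a bookkeeping reduction to Proposition~\ref{prop:GX_sing_homeo_reg}, so no step is genuinely hard. The only point deserving care is checking that the local inverse $g$ is truly an a.e. $(G,X)$-morphism: this rests on the observation that the restriction of $f$ to the portly set $\U\cap\reg(M)$ is a $(G,X)$-morphism into $\reg(N)$, together with the fact that such a morphism between open subsets of $X$ is a restriction of an element of $G$ and hence admits an inverse of the same type.
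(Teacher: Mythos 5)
Your proposal is correct, and it is the natural way to obtain this corollary from Proposition \ref{prop:GX_sing_homeo_reg}; the paper itself states it without proof as an immediate consequence. Your forward inclusion $f(\reg(M))\subset \reg(N)$ is exactly the intended one-line deduction: local homeomorphy everywhere gives $\reg(f)=\reg(M)$, and $\reg(f)$ maps into $\reg(N)$ by definition of $\reg(f)$. Where you add genuine content is the singular-locus half, $f^{-1}(\reg(N))\subset\reg(M)$: this does \emph{not} follow from the bare statement of the proposition, and your device of applying the forward inclusion to a local inverse $g=(f_{|\U})^{-1}$ is sound, granted the two facts you use: that an open subset $\U$ of a singular $(G,X)$-manifold carries an inherited maximal a.e.\ structure with $\reg(\U)=\U\cap\reg(M)$ (a maximality argument of the same kind as in the paper's proof of the proposition), and that an injective $(G,X)$-morphism between open sets has a $(G,X)$-morphism as inverse (which you flag explicitly). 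An equivalent route, closer to the letter of the paper, avoids promoting $g$ to an a.e.\ morphism: pull back the genuine $(G,X)$-structure of $\V\subset\reg(N)$ through the homeomorphism $f_{|\U}$; on the portly set $\U\cap\reg(M)$ this pullback agrees with the existing atlas, so by Lemma \ref{lem:GX_atlas_union} and maximality (Corollary \ref{cor:maximal_atlas}) the pullback charts belong to the maximal atlas of $M$, forcing $\U\subset\reg(M)$ --- the same contradiction you reach. Either way the corollary holds, so your argument stands.
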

					
					\begin{rem} As a consequence, 
					 an isomorphism $M\xrightarrow{f} N$ of singular $(G,X)$-manifolds is an isomorphism of almost everywhere $(G,X)$-atlas in the sense that given the maximal almost everywhere $(G,X)$-atlases $\mathcal A$ and $\mathcal B$ of $M$ and $N$ respectively, the pullback $f^*\mathcal B$ and $\mathcal A$ are equal. It is then an isomorphism in every natural ways relative to singular $(G,X)$-manifolds.

                    \end{rem}
                    \begin{prop} Let $M$ be a singular $(G,X)$-manifold and let $N\xrightarrow{\pi} M$ be a branched covering.
                    
                    $N$ has a unique a.e. $(G,X)$-structure for which $\pi$ is an a.e. $(G,X)$-morphism.
                    and $\reg(\pi)=\reg(N)\cap \ord(\pi)$.
                  
                     Furthermore, the Galois group $\Gamma(\widetilde M^S/M)$ acts by a.e. $(G,X)$-morphisms.
                    \end{prop}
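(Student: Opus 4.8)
The plan is to establish the three claims in turn, the recurring tool being that the ordinary locus of the base and $\reg(M)$ are both portly, so that sets built from them stay portly.

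\emph{Unique a.e. structure.} I would set $\U:=\ord(\pi)\cap \pi^{-1}(\reg(M))\subset N$. Since $\ord(\pi)$ is portly in $N$ and $\pi$ restricts to a covering (in particular a local homeomorphism) onto the ordinary locus of $M$, the preimage of the portly set $\reg(M)$ is portly; hence $\U$ is portly in $N$ and $\pi_{|\U}$ is a local homeomorphism onto an open subset of $\reg(M)$. Pulling the $(G,X)$-atlas of $\reg(M)$ back through this local homeomorphism produces a $(G,X)$-atlas on the portly set $\U$, i.e. an a.e. $(G,X)$-structure on $N$; by Corollary \ref{cor:maximal_atlas} it is contained in a unique maximal one, and Remark \ref{prop:GX_sing_morph_local} shows $\pi$ is an a.e. $(G,X)$-morphism for it. For uniqueness, if $\mathcal A_1,\mathcal A_2$ are two a.e. structures making $\pi$ an a.e. morphism, then on each regular locus of $\pi$ the map $\pi$ is a $(G,X)$-morphism into $\reg(M)$, hence a local homeomorphism, which forces the corresponding structure to equal the pullback of the structure of $\reg(M)$. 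These two regular loci are portly, so $\mathcal A_1$ and $\mathcal A_2$ agree on a portly subset, and Lemma \ref{lem:GX_atlas_union} (analyticity of a.e. structures) gives $\mathcal A_1=\mathcal A_2$.

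\emph{The regular locus of $\pi$.} Next I would apply Proposition \ref{prop:GX_sing_homeo_reg} to $\pi\colon N\to M$, obtaining $\reg(\pi)=\reg(N)\cap L$ where $L$ is the set of points at which $\pi$ is a local homeomorphism; it then remains to identify $\reg(N)\cap L$ with $\reg(N)\cap\ord(\pi)$. The inclusion $\ord(\pi)\subset L$ is immediate, since over an evenly covered neighbourhood $\pi$ maps the component through $x$ homeomorphically. The reverse inclusion is the heart of the argument: given $x\in L$, I would write $y=\pi(x)$, take a small connected neighbourhood $\U$ of $y$, let $\hat\U$ be the component of $\pi^{-1}(\U)$ through $x$, and use Lemma \ref{lem:branched_covering_restriction} to see that $\hat\U\to\U$ is again a branched covering, then exploit the local homeomorphism hypothesis together with completeness of the spread to force this local branched covering to be trivial for $\U$ small, so that $x\in\ord(\pi)$. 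I expect this inclusion $\reg(\pi)\subset\ord(\pi)$ to be the genuine obstacle: promoting \emph{local homeomorphism at $x$} to \emph{evenly covered neighbourhood of $\pi(x)$} requires controlling all components of $\pi^{-1}(\U)$ and not merely the one through $x$, which is precisely where the completeness and local triviality have to do the real work.

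\emph{The Galois action.} For the last claim the cleanest route reuses the uniqueness above with $N=\widetilde M^S$ and its a.e. structure $\mathcal A$. Any $g\in\Gamma(\widetilde M^S/M)$ is a homeomorphism with $\pi\circ g=\pi$; the pullback $g^{*}\mathcal A$ is again an a.e. $(G,X)$-structure on $\widetilde M^S$, tautologically making $g\colon(\widetilde M^S,g^{*}\mathcal A)\to(\widetilde M^S,\mathcal A)$ an isomorphism, so that $\pi=\pi\circ g\colon(\widetilde M^S,g^{*}\mathcal A)\to M$ is an a.e. $(G,X)$-morphism. By the uniqueness of such a structure, $g^{*}\mathcal A=\mathcal A$, whence $g$ preserves the a.e. $(G,X)$-structure and is an a.e. $(G,X)$-morphism, as desired.
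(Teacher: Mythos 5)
Your first and third parts are correct and essentially reproduce the paper's own argument. The set $\ord(\pi)\cap\pi^{-1}(\reg(M))=\pi^{-1}(\ord(M)\cap\reg(M))$ is exactly the portly set over which the paper pulls back the atlas of $M$, and your uniqueness argument (two structures making $\pi$ an a.e.\ morphism must both restrict to the pullback structure on a portly set, hence coincide by maximality, via Lemma \ref{lem:GX_atlas_union} and Corollary \ref{cor:maximal_atlas}) is the paper's as well. For the Galois action, deducing $g^{*}\mathcal A=\mathcal A$ from uniqueness is a mild, perfectly valid variant of the paper's more direct remark that deck transformations restrict to $(G,X)$-morphisms on the portly set $\reg(N)\cap\ord(\pi)$.

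The genuine gap is in your second part, and you located it yourself: the inclusion $\reg(\pi)\subset\ord(\pi)$. No argument along the lines you sketch can close it, because with the paper's official definition of the ordinary locus ($\ord(N)=\pi^{-1}(\ord(M))$, i.e.\ ordinariness is decided in the base) this inclusion is simply false. Knowing that $\pi$ is a local homeomorphism at $x$ controls only the component of $\pi^{-1}(\U)$ through $x$; for $\pi(x)$ to be ordinary, \emph{every} component of $\pi^{-1}(\U)$ must map homeomorphically, and completeness gives no handle on the other components. Concretely, take $M=\mathbb{C}$ with its tautological complex affine structure and let $N\xrightarrow{\pi}M$ be $z\mapsto z^{3}-3z$; this is a branched covering in the paper's sense (it is the Fox completion of the $3$-sheeted covering over the complement of the critical values $\pm 2$). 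The fiber over $-2$ is $\{1,-2\}$, with $z=1$ a double point. At $z=-2$ the map $\pi$ is a local homeomorphism, so $z=-2$ lies in $\reg(N)$ and, by Proposition \ref{prop:GX_sing_homeo_reg}, in $\reg(\pi)$; but $\pi(-2)=-2$ is not an ordinary point of $M$, since for every small neighborhood $\U$ of $-2$ the component of $\pi^{-1}(\U)$ around $z=1$ is a two-fold branched cover. Hence $z=-2\in\reg(\pi)\setminus\ord(\pi)$ and the asserted equality fails.

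For what it is worth, the paper's own proof declares the identity ``a direct consequence of Proposition \ref{prop:GX_sing_homeo_reg}'', which it is only if one reads $\ord(\pi)$ as the set of points of $N$ at which $\pi$ is a local homeomorphism --- a pointwise notion in the total space, not the preimage of $\ord(M)$. Under that reading your ``heart of the argument'' evaporates and there is nothing left to prove; under the paper's stated definition only the inclusion $\reg(N)\cap\ord(\pi)\subset\reg(\pi)$ survives. So your instinct that this step is where all the difficulty sits was sound, but the correct conclusion is that the step cannot be proved as you set it up, not that completeness and Lemma \ref{lem:branched_covering_restriction} will eventually force it.
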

                    \begin{proof}
                    Let $\B$ be the a.e. $(G,X)$-atlas on $M$.
                     $\pi$ induces a covering $\hat\U:=\pi^{-1}(\ord(M)\cap \reg(M))\xrightarrow{\pi} \ord(M)\cap \reg(M)=:\U$ thus the pull back of $\B$ by $\pi_{|\hat\U}$ makes $\pi_{|\hat\U}^{|\U}$ into $(G,X)$-morphism. Since $\ord(M)$ and $\reg(m)$ are portly then $\U$ is portly in $M$ thus in $\ord(M)$, therefore $\hat \U$ is portly in $\ord(N)$ and portly in $N$. The $(G,X)$-structure on $\hat U$ thus induces a unique a.e. $(G,X)$-structure on $N$ and $\pi$ is an a.e. $(G,X)$-morphism with respect to this $(G,X)$-structure.               
                     The statement on $\reg(\pi)$ is then a direct consequence of \ref{prop:GX_sing_homeo_reg}.
                     
                     Consider $\A$ such an a.e. $(G,X)$-structure as well as $\hat \U$ and $\U$ as above. Since $\pi_{|\hat\U}^{|\U}$ is an a.e. $(G,X)$-morphism and a local homeomorphism the restriction of $\A$ to $\hat\U$ is the compatible thus equal to the pull-back of $\B$ by $\pi_{|\hat\U}^{|\U}$. Finally, $\hat\U$ is portly in $N$ thus $\A$ is the a.e. $(G,X)$-structure constructed above.
                     
                     The Galois group $\Gamma(N/M)$ acts by homeomorphisms which restriction to  $\reg(N)\cap \ord(\pi)$  are $(G,X)$-morphisms. Since  $\reg(N)\cap \ord(\pi)$  is portly, then $\Gamma(N/M)$ acts by a.e. $(G,X)$-morphisms.
                     
                    \end{proof}

					\begin{cor} Let $M$ be a singular $(G,X)$-manifold and let $S$ be a skeletal subset of $M$ containing $\sing(M)$. Then $\widetilde M^S$ admits a unique a.e. $(G,X)$-structure such that the natural projection $\widetilde M^S\rightarrow M$ is an a.e. $(G,X)$-morphism.
					
					Furthermore, the Galois group $\Gamma(\widetilde M^S/M)$ acts by a.e. $(G,X)$-morphisms. 
					 
					\end{cor}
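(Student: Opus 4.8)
The plan is to read the corollary as the specialization of the preceding proposition to the particular branched covering $N=\widetilde M^S$. That proposition is already stated for an arbitrary branched covering $N\xrightarrow{\pi}M$ of a singular $(G,X)$-manifold, and even its \emph{furthermore} clause is phrased for $\Gamma(\widetilde M^S/M)$; so once I know that $\widetilde M^S\rightarrow M$ is a branched covering in the sense of the earlier definition, both the existence and uniqueness of the a.e. $(G,X)$-structure and the statement about the Galois action follow with no further work.

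The one point that genuinely needs checking is therefore that $\widetilde M^S$ is well defined and is a branched covering of $M$. First I would observe that, since $S$ is skeletal and contains $\sing(M)$, the set $M\setminus S$ is an open subset of $\reg(M)$, which carries a genuine $(G,X)$-atlas and is hence locally modeled on $X$. Granting that the model space $X$ is locally simply connected --- automatic in every case of interest, where $X$ is a manifold --- the manifold $\reg(M)$ and its open subset $M\setminus S$ are semi-locally simply connected. Proposition \ref{prop:universality} then produces the universal covering $\widetilde M^S$ of $M$ possibly branched over $S$; by that proposition it is a branched covering whose ordinary locus is $p^{-1}(\reg(M)\setminus S)$, which is portly.

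With $\widetilde M^S\xrightarrow{p}M$ recognized as a branched covering, I would invoke the preceding proposition with $N=\widetilde M^S$. It furnishes the unique a.e. $(G,X)$-structure on $\widetilde M^S$ for which $p$ is an a.e. $(G,X)$-morphism, obtained by pulling back the atlas of $M$ along the honest covering $p^{-1}(\ord(M)\cap\reg(M))\rightarrow\ord(M)\cap\reg(M)$ and then completing; and it guarantees that $\Gamma(\widetilde M^S/M)$ acts by a.e. $(G,X)$-morphisms, since each of its elements is a homeomorphism whose restriction to the portly set $\reg(\widetilde M^S)\cap\ord(p)$ is a $(G,X)$-morphism.

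I expect the only real obstacle to be the semi-local simple connectedness of $M\setminus S$, which is exactly what makes $\widetilde M^S$ exist in the first place: in the full generality where $X$ is merely assumed connected, locally connected and Hausdorff this is not automatic, so it must either be carried as a standing hypothesis (implicit already in writing $\widetilde M^S$) or deduced from the finer local structure of the model space $X$. Everything past that point is a purely formal transcription of the general proposition to the universal branched covering.
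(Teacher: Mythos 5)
Your proof is correct and follows exactly the route the paper intends: the corollary carries no separate proof precisely because it is the specialization of the preceding proposition to $N=\widetilde M^S$, once Proposition~\ref{prop:universality} guarantees that $\widetilde M^S\rightarrow M$ is a branched covering whose ordinary locus contains the portly set $p^{-1}(M\setminus S)$. Your closing remark is also fair: the semi-local simple connectedness of $M\setminus S$ needed to invoke Proposition~\ref{prop:universality} is an implicit standing hypothesis in the paper (encoded in the very notation $\widetilde M^S$), so flagging it does not constitute a divergence from the paper's argument.
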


\subsection{Developping map}

For brievety sake, we will denote by $\widetilde M$ the universal covering of $M$ possibly branched over $\sing(M)$ which can also be denoted by $\widetilde M^{\sing(M)}$ in the notation of the previous section.

In the whole section $(X_\alpha)_{\alpha \in A}$ is a familly of singular $(G,X)$ model spaces ie connected singular $(G,X)$-manifolds.

% 
% \edt{
% \begin{prop} Consider $\mathcal C$ the category of singular $(G,X)$-manifolds whose morphisms are local homeomorphisms.
% The application $M\mapsto \widetilde M$ is a fully faithful functor from $\mathcal C$ to itself.
% \end{prop}
% \begin{proof}
%  Let $M_1\xrightarrow{\phi} M_2$ be an a.e. $(G,X)$-morphism which is a local homeomorphism with $M_1$ and $M_2$ two $(G,X)$-manifolds. From Proposition \ref{prop:GX_sing_morph_local}, $\phi$ sent regular locus to regular locus; then its restriction to the regular locii $\phi$ lifts to a map $ \widetilde \reg(M_1)\xrightarrow{~~~} \widetilde \reg(M_2)$. Since spread completion is functorial, $\phi$ lifts  to a map $\widetilde M_1\rightarrow \widetilde M_2$.
% \end{proof}
% }

\begin{defi}[developping map]
  Let $N\xrightarrow{p} M$ be a branched covering of singular $(G,X)$-manifold.
  A developping map $\D$ of $N\rightarrow M$ is any a.e. $(G,X)$-morphism $N \rightarrow X$. 
  
  For simplicity sake, a developping map of $\widetilde M\rightarrow M$ is simply called a developping map of $M$.
  \end{defi}
% \begin{lem} 
%   Let $N\xrightarrow{p} M$ be a branched covering of singular $(G,X)$-manifold and let $\D$ be a developping map  of $N\rightarrow M$. Then $\reg(p)\subset \reg(\D)$.
% \end{lem}
% \begin{proof}
%  \edt{pas tout à fait certain TBD}
% \end{proof}

The proofs of the following two Lemmas are identical to the usual proof in the regular context and are thus skipped.  
\begin{lem} \label{lem:holonomy} Let $N\xrightarrow{p} M$ be a quasi-Galoisian branched covering of singular $(G,X)$-manifold. If $\D$  is a developping map of $N\xrightarrow{p} M$. 
Then there exists a unique morphism $\rho:\Gamma(N/M)\rightarrow G$ such that $\D$ is $\rho$-equivariant.
\end{lem}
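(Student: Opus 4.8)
The plan is to follow the classical holonomy argument for regular $(G,X)$-manifolds verbatim, the only adaptation being that every rigidity step is performed on the connected dense regular locus $\reg(N)$ before being propagated to all of $N$ by continuity. Fix $\gamma\in\Gamma(N/M)$. Since $\gamma$ is an a.e. $(G,X)$-morphism which is a homeomorphism, Proposition~\ref{prop:GX_sing_homeo_reg} and its corollary guarantee that $\gamma$ preserves $\reg(N)$; and since $\D:N\rightarrow X$ is an a.e. $(G,X)$-morphism into $X$, its regular locus is all of $\reg(N)$ (the earlier proposition computing $\reg(f)=\reg(M)$ for targets equal to $X$). Consequently both $\D$ and $\D\circ\gamma$ restrict to honest $(G,X)$-morphisms $\reg(N)\rightarrow X$, in particular to local homeomorphisms.

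First I would produce, for each $x\in\reg(N)$, a unique $g_x\in G$ with $\D\circ\gamma=g_x\circ\D$ near $x$. Choosing a connected neighborhood $\U_x$ on which $\D$ is a homeomorphism onto an open $\V_x\subset X$, the map $(\D\circ\gamma)\circ(\D_{|\U_x})^{-1}:\V_x\rightarrow X$ is a $(G,X)$-morphism between open subsets of $X$, hence coincides on $\V_x$ with the restriction of a single element $g_x\in G$; uniqueness of $g_x$ is exactly the rigidity built into the definition of $(G,X)$, namely that two elements of $G$ agreeing on a nonempty open set agree everywhere.

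Next I would show $x\mapsto g_x$ is locally constant: on the overlap of two such neighborhoods the identity $g_x\circ\D=g_{x'}\circ\D$ forces $g_x$ and $g_{x'}$ to agree on the nonempty open set $\D(\U_x\cap\U_{x'})\subset X$, whence $g_x=g_{x'}$ by rigidity once more. As $\reg(N)$ is portly in the connected space $N$ it is connected, so $g_x$ equals a single element $g=:\rho(\gamma)$, and $\D\circ\gamma=\rho(\gamma)\circ\D$ holds throughout $\reg(N)$; by density of $\reg(N)$ and continuity of $\D$, $\gamma$ and $\rho(\gamma)$ this extends to all of $N$. The element $\rho(\gamma)$ is unique because $\D(\reg(N))$ is open (a $(G,X)$-morphism is an open map) and any two elements of $G$ agreeing there agree everywhere.

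Finally the homomorphism property is formal: comparing $\D\circ(\gamma_1\gamma_2)=\rho(\gamma_1\gamma_2)\circ\D$ with $\D\circ\gamma_1\circ\gamma_2=\rho(\gamma_1)\circ\D\circ\gamma_2=\rho(\gamma_1)\rho(\gamma_2)\circ\D$, the uniqueness just established yields $\rho(\gamma_1\gamma_2)=\rho(\gamma_1)\rho(\gamma_2)$. The only point that departs from the classical proof is the reduction to $\reg(N)$, so I expect the main (and rather mild) obstacle to be the bookkeeping showing that $\gamma$ genuinely preserves $\reg(N)$ and that $\D$ is genuinely regular there, which is precisely what lets us apply $(G,X)$-rigidity on a connected set; past that reduction the argument is identical to the regular case, which is why the author elides it.
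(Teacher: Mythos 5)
Your proposal is correct and is precisely the argument the paper has in mind: the paper explicitly skips this proof, stating it is ``identical to the usual proof in the regular context,'' and your write-up is exactly that classical holonomy argument, with the only genuinely singular bookkeeping (that $\gamma$ preserves $\reg(N)$ via Proposition~\ref{prop:GX_sing_homeo_reg} and that $\reg(\D)=\reg(N)$, so rigidity can be applied on the connected portly set $\reg(N)$ and then propagated by density) carried out correctly.
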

% \begin{proof}
%  For $\gamma \in \Gamma(N/M)$, the map $\D\circ \gamma$ is also a developping map of $N\rightarrow M$. Let $x\in \reg(N)$ and let $(\U,\V,\varphi)$ be a chart around $x$. 
%  There exists $g,h \in G$ such that $\forall y\in \V, \D\circ \varphi^{-1}(y) = g(y)$ and  $\D\circ \gamma\circ \varphi^{-1}(y)= h(y)$. Therefore, $\forall u\in \U,\  \D\circ \gamma(u) = h\circ g^{-1} \circ \D(u) $; then, by analyticity, $\D\circ \gamma = h\circ g^{-1}\circ \D$ ie $\forall \gamma\in \Gamma(N/M),\exists g_\gamma\in G, \D\circ \gamma = g_\gamma\circ \D$. Note that such a $g_\gamma$ is unique and we can define $\rho : \Gamma(N/M)\rightarrow G, \gamma \mapsto g_\gamma$. Note that $$\rho(\gamma_1\gamma_2)\circ \D = \D\circ (\gamma_1\gamma_2)= \D\circ \gamma_1\circ \gamma_2 = \rho(\gamma_1)\circ \D\circ \gamma_2 = \rho(\gamma_1)\circ \rho(\gamma_2)\circ \D $$ and $\rho$ is thus a morphism.
% \end{proof}
\begin{defi} 
 Let $N\xrightarrow{p} M$ be a quasi-Galoisian branched covering of singular $(G,X)$-manifold admitting a developping map $\D$. The holonomy of $N\rightarrow M$ (associated to $\D$) is the morphism $\rho$ of Lemma \ref{lem:holonomy}.
\end{defi}
 \begin{rem}
    A developping map $\D$ of a quasi-Galoisian covering $M\rightarrow N$ sends points to fix points of their stabilizer : $\D(x) \in \fix(\rho(\Gamma_x))$.
  \end{rem}
  
\begin{rem} 
 Let $M$ be a singular $(G,X)$-manifold. The holonomy of $\widetilde M \rightarrow M$ is indeed the holonomy of $\reg(M)$ in the usual sense.
\end{rem}
\begin{lem}
 Let $N\xrightarrow{p} M$ be a quasi-Galoisian branched covering of singular $(G,X)$-manifold. If it admits a couple $(\D,\rho)$ of developping map and holonomy, then all other such couples are obtained via conjugation ie:
 if $(\D',\rho')$ is another couple developping map and holonomy, then there exists a unique $g\in G$ such that $(\D',\rho)= (g\circ\D,\rho^g)$ with $\rho^g : \gamma \mapsto g\rho(\gamma)g^{-1}$.
\end{lem}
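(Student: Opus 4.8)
The plan is to compare $\D$ and $\D'$ on the regular locus, produce a single element $g\in G$ relating them there, propagate this identity to all of $N$ by density, and finally read off the relation between the holonomies from equivariance.

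First, recall that $\reg(N)$ is portly in the connected space $N$, hence connected; and since a developping map is by definition an a.e. $(G,X)$-morphism $N\to X$, the proposition asserting $\reg(f)=\reg(M)$ for a.e. morphisms $f\colon M\to X$ shows that $\D$ and $\D'$ restrict to genuine $(G,X)$-morphisms on all of $\reg(N)$. In particular they are local homeomorphisms there, so $\D(\reg(N))$ is open in $X$.

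Next comes the local comparison. Fix $x\in\reg(N)$ and a connected chart $(\U,\V,\varphi)$ around $x$, with $\varphi\colon\U\to\V\subset X$ a $(G,X)$-chart. Then $\D\circ\varphi^{-1}$ and $\D'\circ\varphi^{-1}$ are $(G,X)$-morphisms between connected open subsets of $X$. Such a morphism is locally given by an element of $G$, and by the defining rigidity of $(G,X)$ (two elements of $G$ agreeing on a nonempty open set agree on $X$) these local elements coincide on overlaps; since $\V$ is connected, the morphism equals the restriction to $\V$ of a unique element of $G$, say $g_1$ and $g_2$ respectively. Hence $\D'=(g_2 g_1^{-1})\circ\D$ on $\U$, and we set $h(x):=g_2 g_1^{-1}$. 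On the overlap of two such charts the two candidate elements of $G$ agree on a nonempty open set, so by rigidity they coincide; thus $x\mapsto h(x)$ is locally constant on $\reg(N)$. As $\reg(N)$ is connected, $h$ is a constant $g\in G$ and $\D'=g\circ\D$ on $\reg(N)$. Since $g$ acts by a homeomorphism of $X$, both $\D'$ and $g\circ\D$ are continuous on $N$ and agree on the dense set $\reg(N)$; as $X$ is Hausdorff, $\D'=g\circ\D$ on all of $N$.

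For uniqueness of $g$: if $g\circ\D=g'\circ\D$ then $g$ and $g'$ agree on the open set $\D(\reg(N))$, so $g=g'$ by rigidity. Finally, for any $\gamma\in\Gamma(N/M)$ the $\rho$-equivariance of $\D$ gives
\[
\D'(\gamma\cdot x)=g\,\D(\gamma\cdot x)=g\,\rho(\gamma)\,\D(x)=\bigl(g\,\rho(\gamma)\,g^{-1}\bigr)\,g\,\D(x)=\rho^g(\gamma)\,\D'(x),
\]
so $\D'$ is $\rho^g$-equivariant. Since $N\to M$ is quasi-Galoisian, Lemma \ref{lem:holonomy} attaches a \emph{unique} holonomy to the developping map $\D'$, whence $\rho'=\rho^g$. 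The step I expect to be most delicate is the passage from the locally constant comparison $h$ to a global constant $g$: it relies crucially on the connectedness of $\reg(N)$ together with the rigidity axiom of $(G,X)$, and the subsequent extension across the singular locus rests on the density of $\reg(N)$ and the Hausdorffness of $X$, i.e. on the analyticity of a.e. $(G,X)$-morphisms.
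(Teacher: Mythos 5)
Your proof is correct and is exactly the argument the paper has in mind: it omits this proof as ``identical to the usual proof in the regular context,'' and that usual proof is precisely what you give --- local rigidity of $G$ plus connectedness of $\reg(N)$ to produce a single $g$ with $\D'=g\circ\D$ on $\reg(N)$, then density of $\reg(N)$ and Hausdorffness of $X$ to extend across the singular locus, and finally the equivariance computation together with the uniqueness of the holonomy from Lemma \ref{lem:holonomy} to get $\rho'=\rho^g$. No gaps; your handling of the only genuinely ``singular'' step (the extension by density) is the right one.
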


\begin{lem}\label{lem:descend_developing_map} Let $M\rightarrow N$ be a quasi-Galoisian branched covering of singular $(G,X)$-manifolds admitting a developping map. If the holonomy of $M\rightarrow N$ is trivial, then the developping map of $M\rightarrow N$ induces an a.e. $(G,X)$-morphism $N\rightarrow X$. 
\end{lem}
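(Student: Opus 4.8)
The plan is to show that triviality of the holonomy forces the developping map $\D:M\to X$ to be constant along the fibres of the covering $\pi:M\to N$, and then to check that the induced map on $N$ is again an a.e. $(G,X)$-morphism.

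First I would record that, by Lemma~\ref{lem:holonomy}, $\D$ is $\rho$-equivariant for the holonomy $\rho:\Gamma(M/N)\to G$; the hypothesis that $\rho$ is trivial then reads $\D(\gamma\cdot x)=\D(x)$ for every $\gamma\in\Gamma(M/N)$ and every $x\in M$. In other words, $\D$ is constant on each orbit of $\Gamma(M/N)$ in $M$.

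The key step, and the main obstacle, is to upgrade this orbit-constancy to constancy on the whole fibres of $\pi$. Since $\pi$ is only quasi-Galoisian, $\Gamma(M/N)$ need not act transitively on $\pi^{-1}(b)$ when $b$ is a branching point, so orbit-invariance does not immediately yield fibre-invariance. Here I would invoke Lemma~\ref{lem:dense_orbits}: each orbit $\Gamma(M/N)\cdot\hat b$ is dense in the fibre $\pi^{-1}(b)$. Since $\D$ is continuous and $X$ is Hausdorff, a map constant on a dense subset of $\pi^{-1}(b)$ (for the subspace topology) is constant on its closure, hence on all of $\pi^{-1}(b)$. Thus $\D$ is constant on every fibre of $\pi$, and there is a well-defined set map $\bar\D:N\to X$ with $\bar\D\circ\pi=\D$.

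It then remains to verify that $\bar\D$ is an a.e. $(G,X)$-morphism. For continuity, I would use that $\pi$ is a continuous open (Lemma~\ref{lem:spread_connected_image}) surjection, hence a quotient map; since $\bar\D\circ\pi=\D$ is continuous, $\bar\D$ is continuous. For the regularity, consider the portly subset $\reg(N)\cap\ord(N)$ of $N$. Over the ordinary locus, $\pi$ restricts to a covering $\ord(M)\to\ord(N)$, and on $\reg(\pi)=\reg(M)\cap\ord(M)$ the map $\pi$ is simultaneously a local homeomorphism and a $(G,X)$-morphism; by Proposition~\ref{prop:GX_sing_homeo_reg} and its corollary this identifies $\pi(\reg(M)\cap\ord(M))$ with $\reg(N)\cap\ord(N)$. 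On this set $\bar\D$ is locally the composition of a local inverse of $\pi$ (a $(G,X)$-morphism) with $\D$, which is a genuine $(G,X)$-morphism on all of $\reg(M)$ because it is an a.e. $(G,X)$-morphism into $X$; hence $\bar\D$ is a $(G,X)$-morphism there. As $\reg(N)\cap\ord(N)$ is portly, $\bar\D$ is an a.e. $(G,X)$-morphism $N\to X$, which is the desired conclusion.
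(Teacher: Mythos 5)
Your proposal is correct and follows essentially the same route as the paper's proof: triviality of the holonomy plus Lemma~\ref{lem:dense_orbits} and continuity give constancy of $\D$ on fibres, openness of the projection (Lemma~\ref{lem:spread_connected_image}) descends the continuity, and regularity is checked on a portly subset of the ordinary locus. The only cosmetic difference is that the paper packages the descent through the quotient space $\sim\backslash M$ and a homeomorphism $\sim\backslash M\rightarrow N$, while you invoke the equivalent fact that a continuous open surjection is a quotient map; your regularity verification is a more detailed version of the paper's appeal to ``usual results on $(G,X)$-manifolds.''
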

\begin{proof}
 Consider on $M$ the equivalence relation $x\sim y$ if $p(x)=p(y)$. By Lemma \ref{lem:dense_orbits}, the action of $\Gamma(M/N)$ has dense orbit in the fiber of $p$ then the equivalence classes of $\sim$ are the closure of orbits of $\Gamma(M/N)$. Furthermore, $\D$ is constant on the orbit of $\Gamma(M/N)$ and continuous hence constant on the equivalence classes of $\sim$. Therefore, $\D$ induces a continuous map $\sim\backslash M \rightarrow X$. Moreover, $p$ induces a continuous map $\sim\backslash M \rightarrow N$ which is open by Lemma \ref{lem:spread_connected_image}, injective by definition and surjective since $p$ is surjective; hence $\sim\backslash M \rightarrow N$ is an homeomorphism. We thus constructed a continuous map $\overline \D:N \rightarrow X$. Notice that $\overline \D$ is a $(G,X)$-morphism on $\ord(N)$ by usual results on $(G,X)$-manifolds. Finally, $\D$ is an a.e. $(G,X)$-morphism.
\end{proof}

\begin{prop} 
Let $(X_\alpha)_{\alpha\in A}$ be a family of  $(G,X)$-model spaces and let $M$ be a $X_A$-manifold $M$.
Assume that $\reg(M)$ is semi-locally simply connected  and that each $X_\alpha$ admits a developping map $\widetilde {X_\alpha} \rightarrow X$. Let $S\subset X$ be a skeletal subset such that $S\supset \sing(M)$.

Then, $\widetilde M^S \rightarrow M$ admits a developping map $\D_S:\widetilde M^S \rightarrow X$.

\end{prop}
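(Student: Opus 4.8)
The plan is to develop on the regular locus by the classical theory and then to extend continuously across the singular points, the local models furnishing the extensions. First I reduce to $S=\sing(M)$. Indeed, since $\sing(M)\subset S$, the universal branched cover $\widetilde M^{\sing(M)}$ is a covering of $M$ possibly branched over $S$, so Proposition \ref{prop:universality} provides a branched covering $q:\widetilde M^{S}\to\widetilde M^{\sing(M)}$. If $\D_0$ is a developing map of $\widetilde M^{\sing(M)}$, then $\D_0\circ q$ is continuous, and on the portly locus where $q$ is a local homeomorphism it is a composite of $(G,X)$-morphisms, hence a $(G,X)$-morphism; thus $\D_0\circ q$ is an a.e. $(G,X)$-morphism, i.e. a developing map of $\widetilde M^{S}$. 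Henceforth write $\widetilde M=\widetilde M^{\sing(M)}$ and $p:\widetilde M\to M$.

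Write $\widetilde{\reg(M)}:=p^{-1}(\reg(M))$, which by Proposition \ref{prop:universality} is the universal covering of the $(G,X)$-manifold $\reg(M)$ and is portly in $\widetilde M$. The classical developing theorem for $(G,X)$-manifolds (see e.g. \cite{MR1700700}) yields a developing map $\D:\widetilde{\reg(M)}\to X$, a genuine $(G,X)$-morphism. Since $\widetilde{\reg(M)}$ is portly in $\widetilde M$ and $X$ is Hausdorff, a continuous extension of $\D$ to $\widetilde M$, if it exists, is unique and is automatically an a.e. $(G,X)$-morphism; it therefore suffices to extend $\D$ continuously across each $\hat s\in p^{-1}(\sing(M))$.

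Fix such an $\hat s$ with $s=p(\hat s)\in\sing(M)$. Because $M$ is an $X_A$-manifold, choose a connected neighborhood $\U$ of $s$ and an isomorphism of singular $(G,X)$-manifolds identifying $\U$ with a neighborhood $\U_\alpha$ of a singular point of some $X_\alpha$, and let $\V$ be the connected component of $\hat s$ in $p^{-1}(\U)$, a branched covering $\V\to\U\cong\U_\alpha$ by Lemma \ref{lem:branched_covering_restriction}. Let $\widetilde{\U_\alpha}$ be the universal covering of $\U_\alpha$ branched over $\sing(\U_\alpha)$, which exists by semi-local simple connectedness of $\reg(M)$. By Proposition \ref{prop:universality}, $\widetilde{\U_\alpha}$ dominates $\V$ through a branched covering $\pi:\widetilde{\U_\alpha}\to\V$; moreover the same Proposition, applied inside $X_\alpha$, produces a branched covering $r:\widetilde{\U_\alpha}\to\widetilde{X_\alpha}$, so that $\D_\alpha\circ r:\widetilde{\U_\alpha}\to X$ is continuous on all of $\widetilde{\U_\alpha}$ and develops its regular locus. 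On the connected portly set $\reg(\widetilde{\U_\alpha})$ both $\D\circ\pi$ and $\D_\alpha\circ r$ develop the same $(G,X)$-manifold, so by uniqueness of development there is a unique $g\in G$ with $\D\circ\pi=g\cdot(\D_\alpha\circ r)$ on $\reg(\widetilde{\U_\alpha})$. The map $g\cdot(\D_\alpha\circ r)$ is continuous on all of $\widetilde{\U_\alpha}$ and, on the dense regular locus, factors through $\pi$; since $\pi$ is surjective and open and $X$ is Hausdorff, $g\cdot(\D_\alpha\circ r)$ descends to a continuous map $\V\to X$ extending $\D|_{\V\cap\reg(\widetilde M)}$. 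In particular $\D$ extends continuously across $\hat s$.

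Carrying this out at every $\hat s\in p^{-1}(\sing(M))$ produces the sought continuous extension $\D_S:\widetilde M\to X$ of $\D$, which is a developing map by the second paragraph. The principal obstacle is the singular step: one must relate the component $\V$ of the universal branched cover of $M$ to the universal branched cover $\widetilde{X_\alpha}$ of the local model. Here both the domination $\pi:\widetilde{\U_\alpha}\to\V$ and the comparison map $r:\widetilde{\U_\alpha}\to\widetilde{X_\alpha}$ come from the universality in Proposition \ref{prop:universality} together with the functoriality of Fox completion (Theorem \ref{theo:Fox_completion_functor}), the reduction to $S=\sing(M)$ being what makes the two branch loci match. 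The remaining delicate point, that $g\cdot(\D_\alpha\circ r)$ genuinely descends through $\pi$, i.e. is constant on the $\pi$-fibres over the branch points, is settled by continuity, the density of the regular locus and the Hausdorffness of $X$, exactly the argument guaranteeing uniqueness of continuous extensions from a dense subset; analyticity of a.e. $(G,X)$-morphisms ensures a posteriori that the local extensions are mutually compatible.
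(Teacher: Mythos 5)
Your proposal is correct in substance and shares the paper's overall architecture: reduce to $S=\sing(M)$ via Proposition \ref{prop:universality} (the paper does this reduction at the very end, you at the start, which is immaterial), develop $\reg(M)$ classically, and at each singular point work on the universal branched cover $\widetilde{\U_\alpha}$ of a model chart neighborhood, which maps both onto the component $\V$ of $p^{-1}(\U)$ and into $\widetilde{X_\alpha}$ by Fox functoriality, then match the two developing maps by a unique $g\in G$ on the regular locus and push the matched map down to $\V$. The genuine difference is how the descent through $\pi:\widetilde{\U_\alpha}\to\V$ is justified. The paper proves that this covering is quasi-Galoisian with \emph{trivial holonomy} (using simple connectedness of $\reg(\widetilde M)$) and invokes Lemma \ref{lem:descend_developing_map}, whose proof rests on dense Galois orbits (Lemma \ref{lem:dense_orbits}). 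You bypass that machinery entirely: since $\D\circ\pi$ manifestly factors through $\pi$, matching $g\cdot(\D_\alpha\circ r)$ with it on the regular part and then arguing by density and continuity lets you descend without ever mentioning the holonomy of $\widetilde{\U_\alpha}\to\V$. This is a real economy of means.

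One step, however, needs to be tightened. As literally stated, ``fiber-constancy on a dense set, plus $\pi$ open surjective and $X$ Hausdorff, implies descent'' is not a valid implication: take $\pi$ to be the obvious map from two disjoint copies of $\RR$ onto $\RR$ and a dense set meeting every fiber at most once; fiber-constancy on that dense set is vacuous and implies nothing. In particular this is \emph{not} ``exactly the argument guaranteeing uniqueness of continuous extensions from a dense subset.'' What saves your argument is that the agreement set can be promoted to a $\pi$-\emph{saturated} one: the set where $g\cdot(\D_\alpha\circ r)=\D\circ\pi$ is closed in the common domain $\pi^{-1}\bigl(\V\cap p^{-1}(\reg(M))\bigr)$ by Hausdorffness, and contains a dense subset of it, hence equals it --- and this common domain is a full $\pi$-preimage. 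Once agreement holds on a saturated dense open set, the approximation argument does go through: given $x,y$ in a common fiber over $v$, openness of $\pi$ and saturation produce pairs $x'\to x$, $y'\to y$ inside the saturated set with $\pi(x')=\pi(y')$, whence $F(x')=F(y')$, and continuity plus Hausdorffness give $F(x)=F(y)$. With this intermediate saturation step made explicit, your descent, and hence the whole proof, is correct.
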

\begin{proof}

  Let $\widetilde M\xrightarrow{p_M} M$ be the natural projection, let $m\in \sing(\widetilde M)$ and let $\U$ be a connected open chart neighborhood of $p(m)$ together with an embedding $\U\xrightarrow{\iota} X_\alpha$ for some $\alpha\in A$. Denote by $\widetilde X_\alpha \xrightarrow{p_\alpha} X_\alpha$ (resp. $\widetilde \U \xrightarrow{p_{\U}} \U$)  the universal covering of $X_\alpha$ (resp. of $\U$). Denote $\D_\alpha$ (resp. $\D$) the developping map of $X_\alpha$ (resp. of $M\setminus S = \reg(M)$). 
  Consider the restriction of $p_M$ to the connected component $\hat \U$ of $m$ in $p ^{-1}(\U)$, by Lemma \ref{lem:branched_covering_restriction} it is a branched covering and by Proposition \ref{prop:universality}  the covering $\widetilde \U\xrightarrow{p_\U} \U$ is universal.  There thus exists a branched covering $\widetilde \U \xrightarrow{f} \hat \U$ lifting the identity on $\U$. The embedding $\U\xrightarrow{\iota} X_\alpha$ lifts to a spread morphism $\ord(\widetilde \U) \rightarrow \ord(\widetilde X_\alpha)$ which, by Theorem \ref{theo:Fox_completion_functor}, extends to their completion to give a morphism of branched covering $\widetilde \U \xrightarrow{\widetilde \iota} \widetilde X_\alpha$.
  Note that $\widetilde \U \xrightarrow{f} \hat \U$  is quasi-Galoisian 
  of trivial holonomy; indeed a loop $\gamma$ in $\reg(\hat \U)$ of non trivial holonomy would embed into $\reg(\widetilde M)$ as a non trivial holonomy loop hence not homotopically trivial, this would contradict the simple connectedness of $\reg(\widetilde M)$. 
  The map $\D_\alpha\circ \widetilde \iota$ is a developping map of $\hat \U$ hence $\Gamma(\widetilde \U/\hat \U)$-invariant. 
  Then, by Lemma \ref{lem:descend_developing_map}, the a.e. $(G,X)$-morphism $\D_\alpha\circ \widetilde \iota$ induces an a.e. $(G,X)$-morphism $\hat \U \xrightarrow{\D_\U} X$. The diagram below sums up the situation.
 $$
 \xymatrix{
    X &&X&& X\\
    \widetilde X_\alpha\ar[d]^{p_\alpha}\ar[u]_{\D_{\alpha}}&\widetilde \U\ar@{-->}[l]^{\widetilde \iota}\ar@{-->}[r]\ar[d]^{p_\U}&\hat \U\ar@{-->}[u]^{\D_\U}\ar[d]^{p_{M|\hat \U}}\ar@{^{(}->}[r]&\widetilde M\ar[d]^{p_M}& \ar@{_{(}->}[l]\widetilde {M\setminus S}\ar[d]\ar[u]^{\D}\\ 
    X_\alpha& \ar@{_{(}->}[l]^{\iota}\U\ar@{=}[r]&\U\ar@{^(->}[r]&M&\ar@{_{(}->}[l]M\setminus S
}$$

  Since $\D_{\U|\reg(\hat\U)}$ and $\D_{|\reg(\hat \U)}$ are both $(G,X)$-map and both go from $\reg(\hat \U)$, which is connected, to $X$ there exists a unique $g\in G$ such that 
  $\D_{|\reg(\hat \U)} = g\circ \D_{\U|\reg(\hat \U)} $. We can thus continuously extend $\D$ to $\hat\U$ defining $\D(x)= g\circ \D_{\U}(x)$ for all $x\in \U$.
  Finally, $\D$ continuously extends to $\widetilde M$ and $\D:\widetilde M\rightarrow X$ is a developping map of  $\widetilde M\rightarrow M$. 
  
  Since $S\supset \sing(M)$ and by universality of $\widetilde M^S$, there exists a branched covering $\widetilde M^S \xrightarrow{\pi} \widetilde M$ which is an a.e. $(G,X)$-morphism with respect to the natural $(G,X)$-structure on $\widetilde M^S$.  The map $\D_S:=\D\circ \pi$ is then a developping map of $\widetilde M^S \rightarrow M$.

 \end{proof}

\section{Branched covering of Cauchy-compact $\mass{0}$-manifolds}

The aim to this section is to provide a geometric construction of the branched covering of a Cauchy-compact $\mass{0}$-manifold.

\subsection{The branched covering of the BTZ model space}

In \cite{BTZext}, the author introduced the BTZ model space $\mass{0}$ which is defined as $\RR^3$ endowed with the singular $\mass{}$-structure induced by the flat lorentzian metric $\d s^2=-2\d\tau \d \r +\d \r^2+ r^2 \d \theta$ in cylindrical coordinates.
The singular locus of $\mass{0}$ is then $\sing(\mass{0})=\{\r=0\}$ and the regular locus 
is $\reg(\BTZ):= \{\r>0\}$. 

By remark \ref{rem:simple_cases},  $\reg(\mass{0})$ is semi-locally simply connected at every point of $\mass{0}$. Cylindrical coordinates shows the regular locus is homeomorphic to $\RR\times \RR_+^*\times \RR/2\pi\ZZ$, and we can thus deduce that the completed spread of $\widetilde \reg(\mass{0})$  is the topological space
$$\mass{0,\infty} := (\RR\times \RR_+ \times \RR/2\pi \ZZ)/\sim$$
with $\sim$ the equivalence relation identifying two points $(\tau,\theta,\r)$ and $(\tau',\theta',\r')$ if and only if they are either equal or $\tau=\tau'$ and $\r=\r'=0$.
The topology being the quotient topology.
We notice that the developping map of $\reg(\mass{0})$ extends continuously to $\mass{0,\infty}$ to give the map

$$\fonction{\D}{\mass{0,\infty}}{\E^{1,2} }{\left(\tau,\r,\frac{\theta}{2\pi}\right)}{
\begin{pmatrix}t \\ x \\ y\end{pmatrix}=\begin{pmatrix} \tau + \frac{1}{2}\r\theta^2 \\ \tau + \frac{1}{2}\r\theta^2 -r \\ -\r\theta  \end{pmatrix}
}.$$

The image of this map is the  causal future of a lightlike line $\Delta$ ie $\D(\mass{0,\infty})=J^+(\Delta)$. More precisely, $\Delta$ is the line directed by $\overrightarrow u = (1,1,0)$ through the origin of Minkowski space, $\D(\mass{0,\infty})$ is then the union of $\Delta$ with the open half space of $\mass{}$ above the plane directed by $\overrightarrow u^\perp$ through the origin. We recover that the holonomy of $\mass{0}$ is parabolic, indeed the Galois group $\Gamma(\mass{0,\infty},\mass{0})$ is isomorphic to $\ZZ$ and stabilizes point-wise the singular locus; taking $\gamma$ a generator of $\Gamma(\mass{0,\infty},\mass{0})$, the holonomy $\rho$ sends $\gamma$ to $\phi:=\rho(\gamma)$ which thus stabilizes point-wise the image of $\sing(\mass{0,\infty})$ ie $\Delta$. Since $\Delta$ is lightlike, $\phi$ is parabolic.
%Finally, every parabolic isometries are conjugated and the  centralizer of a parabolic isometry is the 1-parameter Lie subgroup is generates; therefore, 

Notice that the holonomy is faithful and that $\D$ is injective, therefore the developping map induces a bijective a.e. $\mass{}$-map 
$$\overline \D :  \mass{0,\infty} \xrightarrow{} J^+(\Delta)/\langle \gamma \rangle$$
which is however \textbf{not open} thus not a homeomorphism if the righthand side is endowed with the quotient topology of the usual topology of $J^+(\Delta)$. To force this map to be a homeomorphism, one has to add open subsets to $J^+(\Delta)$, which leads to the following definition.

\begin{defi}[BTZ topology]\label{defi:BTZtop} Let $\Delta$ be a lightlike line in $\mass{}$.
The BTZ topology on $J^+(\Delta)$ is the topology generated by the one induced by the natural topology of $\mass{}$ and open subsets of the form $I^+(p)\cup]p,+\infty[$  for  $p\in \Delta$ where $]p,+\infty[$ denote the relatively open future half-ray from $p$.
\end{defi}
\begin{prop} \label{prop:quotient_chart}
Let $\gamma \in \isom(\mass{})$ be a parabolic isometry and let $\Delta$ be its fixator in $\mass{}$. Any developping map $\D:\mass{0,\infty}\rightarrow \mass{}$  of holonomy $\gamma$ induces a isomorphism  $\overline \D : \mass{0}\rightarrow J^+(\Delta)/\langle \gamma\rangle$  if $J^+(\Delta)$ is endowed with the BTZ topology.
\end{prop}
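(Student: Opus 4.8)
The plan is to reduce to the explicit developing map displayed above and then transport the conclusion by an isometry. Since $\D$ is a developing map of the branched covering $\mass{0,\infty}\to\mass{0}$, the conjugacy lemma for developing maps applies to the two developing maps $\D$ and the explicit $\D_0$ computed above (of holonomy the BTZ parabolic $\gamma_0$): it yields a unique $g\in\isom(\mass{})$ with $\D=g\circ\D_0$ and $\gamma=g\,\gamma_0\,g^{-1}$, whence $\Delta=g(\Delta_0)$ with $\Delta_0=\fix(\gamma_0)$. Because $g$ is a Minkowski isometry it preserves the causal structure, so $g(J^+(\Delta_0))=J^+(\Delta)$, it sends each generating set $I^+(q)\cup\,]q,+\infty[$ of the BTZ topology to $I^+(gq)\cup\,]gq,+\infty[$ and is therefore a homeomorphism for the BTZ topologies, and it intertwines $\langle\gamma_0\rangle$ with $\langle\gamma\rangle$. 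Consequently it suffices to prove the statement for the standard triple $(\D_0,\gamma_0,\Delta_0)$, and to transport by $g$ at the very end.

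First I would record, from the explicit coordinate formula, that $\D_0\colon\mass{0,\infty}\to J^+(\Delta_0)$ is a bijection: it maps the singular line bijectively onto $\Delta_0$ and $\reg(\mass{0,\infty})$ onto the open half-space $J^+(\Delta_0)\setminus\Delta_0$. Since $\reg(\mass{0})$ is semi-locally simply connected in $\mass{0}$ by Remark~\ref{rem:simple_cases}, Proposition~\ref{prop:Galoisian_semilocal} shows $\mass{0,\infty}\to\mass{0}$ is Galoisian, so $\mass{0}=\Gamma(\mass{0,\infty}/\mass{0})\backslash\mass{0,\infty}$. The $\rho_0$-equivariance of $\D_0$ together with its injectivity show that $\D_0$ conjugates the deck action to the $\langle\gamma_0\rangle$-action, so $\D_0$ descends to a bijection $\overline{\D_0}\colon\mass{0}\to J^+(\Delta_0)/\langle\gamma_0\rangle$. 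It is thus enough to prove that $\D_0\colon\mass{0,\infty}\to(J^+(\Delta_0),\text{BTZ})$ is an equivariant homeomorphism, for passing to quotients then gives the homeomorphism $\overline{\D_0}$.

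Away from the singular locus $\D_0$ is a local $\mass{}$-isomorphism, hence a local homeomorphism onto the open half-space $J^+(\Delta_0)\setminus\Delta_0$, where the BTZ and ambient Minkowski topologies coincide; so $\D_0$ is already a homeomorphism over $\reg$. The only thing to settle — and the main obstacle — is the behaviour along $\sing(\mass{0,\infty})$, where the whole angular fibre is collapsed. Here I would describe, from the quotient description of $\mass{0,\infty}$, an intrinsic neighbourhood basis of a singular point $m$ by saturated open sets, and then check two matching facts on the explicit coordinates: on the one hand that $\D_0^{-1}\!\left(I^+(q)\cup\,]q,+\infty[\right)$ is such a saturated open neighbourhood of the singular fibre for $q\in\Delta_0$ in the past of $p=\D_0(m)$ (giving continuity for the BTZ topology), and on the other hand that the images under $\D_0$ of this neighbourhood basis are exactly the BTZ-generating sets $I^+(q)\cup\,]q,+\infty[$ (giving openness). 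This is precisely the phenomenon motivating Definition~\ref{defi:BTZtop}: for the ambient topology the image of a singular neighbourhood fails to be open because the future cones open up transversally to the lightlike line, and adjoining the sets $I^+(q)\cup\,]q,+\infty[$ repairs exactly this defect. I expect the explicit matching of these two neighbourhood bases to be the heart of the argument.

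Finally I would assemble the pieces. Being a homeomorphism onto the $\langle\gamma_0\rangle$-quotient of $(J^+(\Delta_0),\text{BTZ})$, $\overline{\D_0}$ transports the Hausdorff, second countable, locally connected structure of $\mass{0}$, so the target is a legitimate underlying space for a singular $\mass{}$-manifold. On the regular loci $\overline{\D_0}$ restricts to the developing map, an $\mass{}$-isomorphism of $\reg(\mass{0})$ onto $(J^+(\Delta_0)\setminus\Delta_0)/\langle\gamma_0\rangle$ with its flat Lorentzian structure (the parabolic $\gamma_0$ acting freely and properly off $\Delta_0$); since a singular $\mass{}$-structure is determined by its maximal a.e. structure, $\overline{\D_0}$ is an isomorphism of singular $\mass{}$-manifolds. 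Transporting by the isometry $g$ of the first paragraph then yields the isomorphism $\overline{\D}\colon\mass{0}\to J^+(\Delta)/\langle\gamma\rangle$ for the original parabolic $\gamma$, which completes the proof.
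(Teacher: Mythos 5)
Your skeleton is the same as the paper's: reduce everything to the explicit developing map, note that $\overline\D$ is a continuous bijection, and decide the homeomorphism question by matching neighbourhoods of singular points against the BTZ-generating sets. Two of your additions are genuinely good: the reduction via the conjugation lemma (a unique $g$ with $\D=g\circ\D_0$, $\gamma=g\gamma_0g^{-1}$, and $g$ a homeomorphism for the BTZ topologies) makes explicit a step the paper leaves implicit, since the proposition is stated for an \emph{arbitrary} developing map while all the preceding computations concern one specific $\D$; and you are more careful than the paper in observing that continuity for the BTZ topology is not automatic (the BTZ topology is \emph{finer} than the induced one, so continuity of $\D$ in the Minkowski sense does not suffice and the preimages of the sets $I^+(q)\cup\,]q,+\infty[$ must be checked separately).

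The problem is that you never carry out what you yourself call the heart of the argument. At the singular points --- the only points where anything is at stake --- your text reads ``I would describe\ldots'' and ``I expect the explicit matching of these two neighbourhood bases to be the heart of the argument.'' That matching \emph{is} the proposition; everything else in your write-up (bijectivity, the Galoisian quotient identification, the behaviour over the regular locus, the final transport by $g$) was already available before the statement. The paper closes exactly this step, tersely but completely: the topology of the source is generated by the causal diamonds $\mathrm{Int}\left(J^+(p)\cap J^-(q)\right)$; when $p$ is regular the image of such a diamond is an open diamond of $\mass{}$, and when $p$ is singular the image is $\left(I^+(\D(p))\cup\,]\D(p),+\infty[\right)\cap I^-(\D(q))$, which is BTZ-open --- this is the openness of $\D$. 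The continuity direction you isolated is an equally short computation in the cylindrical coordinates of $\mass{0,\infty}$: for $q=\D(p)$ with $p=(\tau_0,0)$ singular, one finds $\D^{-1}\left(I^+(q)\cup\,]q,+\infty[\right)=\{(\tau,\r,\theta)~:~2(\tau-\tau_0)>\r\}$, a saturated open set, using $(t-\tau_0)^2-(x-\tau_0)^2-y^2=\r\left(2(\tau-\tau_0)-\r\right)$ for the explicit $\D_0$. Without these verifications (or equivalent ones) your proposal is a correct and well-organized plan, but not a proof of the statement.
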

\begin{proof}
Since $\overline \D$ it is continuous and bijective, it suffices to prove that $\D $ is open. The topology of $\mass{0}$ is generated by the open diamonds $Int(J^+(p)\cap J^-(q))$; 
if $p$ is a regular point then the image of such a diamond is an open diamond of $\mass{}$, otherwise the image is $I^+(\D(p))\,\cup\,]\D(p),+\infty[$ thus open for the BTZ topology.

\end{proof}

\subsection{The universal branched covering of $\mass{0}$-manifolds}

Consider $M$ a $\mass{0}$-manifold, ie a singular $\mass{}$-manifold whose singular locus is locally modelled on $\mass{0}$. The regular locus of such a manifold is a topological manifold, hence semi-locally simply connected, and the singular locus is a 1-dimensional closed submanifold hence its regular locus is semi-locally simply connected in $M$. Therefore by Proposition \ref{prop:Galoisian_semilocal}, $M$ admits a Galoisian universal covering $\widetilde M$ possibly branched over its singular locus. Furthermore, from the previous section, the singular model space admits a developping map, then so does $M$. We may add that the singular locus being a 1-submanifold of non trivial holonomy, $\widetilde M$ is exactly branched over $\sing(M)$.
We summarize this as follow.
\begin{prop} Let $M$ be a $\mass{0}$-manifold, then $M$ admits a Galoisian universal covering exactly branched above $\sing(M)$ and admits a developping map.
Furthermore, $\widetilde M$ is a $\mass{0,\infty}$-manifold.
\end{prop}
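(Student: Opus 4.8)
The plan is to assemble three ingredients already available: Proposition~\ref{prop:Galoisian_semilocal} for the Galoisian branched universal covering, the developping-map Proposition above for the developping map, and a local analysis via Lemma~\ref{lem:branched_covering_restriction} together with the explicit computation of the previous subsection (that the universal branched covering of $\mass{0}$ is $\mass{0,\infty}$) for the remaining assertions. First I would record the relevant topology: $\reg(M)$ is an open topological $3$-manifold, hence locally simply connected and in particular semi-locally simply connected; moreover $\sing(M)$ is a closed $1$-submanifold, thus of codimension $2$, so by Remark~\ref{rem:simple_cases} the complement $\reg(M)=M\setminus\sing(M)$ is semi-locally simply connected \emph{in} $M$ in the strong sense required by Proposition~\ref{prop:Galoisian_semilocal}.

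With these facts, Proposition~\ref{prop:universality} provides the universal covering $\widetilde M=\widetilde M^{\sing(M)}\to M$ possibly branched over $\sing(M)$; and since $\reg(M)$ is semi-locally simply connected in $M$, condition (iii) of Proposition~\ref{prop:Galoisian_semilocal} holds, so this covering is Galoisian. For the developping map, $M$ is an $X_A$-manifold for the one-element family $X_0=\mass{0}$, whose branched universal cover $\widetilde{\mass{0}}=\mass{0,\infty}$ carries the explicit developping map $\D:\mass{0,\infty}\to\mass{}$ exhibited in the previous subsection. As $\reg(M)$ is semi-locally simply connected, the developping-map Proposition above then furnishes a developping map $\D:\widetilde M\to\mass{}$ of $\widetilde M\to M$.

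It remains to identify the local structure near the singular locus. Fix $s\in\sing(M)$, choose a connected chart neighborhood $\U$ isomorphic to an open neighborhood of a singular point of $\mass{0}$ (small enough that $\reg(\U)$ has the homotopy type of a circle), and let $\hat\U$ be the connected component over $\U$ containing a chosen lift of $s$; by Lemma~\ref{lem:branched_covering_restriction} the restriction $\hat\U\to\U$ is a branched covering. The model computation gives $\pi_1(\reg(\U))\cong\ZZ$, generated by a loop $\gamma$ encircling the singular line, and the holonomy $\rho(\gamma)$ is the parabolic isometry computed for $\mass{0}$, in particular of infinite order. Because $\reg(\widetilde M)$ is the simply connected universal cover of $\reg(M)$, the component $\reg(\hat\U)$ is the covering of $\reg(\U)$ associated with the kernel of $i_*:\pi_1(\reg(\U))\to\pi_1(\reg(M))$; since $\rho(\gamma)$ has infinite order no nonzero power of $\gamma$ can become trivial in $\pi_1(\reg(M))$, whence $i_*$ is injective, $\reg(\hat\U)$ is the universal cover of $\reg(\U)$, and $\hat\U$ is the universal branched cover of $\U$. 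Two consequences follow. First, $\reg(\hat\U)\to\reg(\U)$ is the nontrivial infinite cyclic cover, so $s$ is not ordinary; as the branching locus is contained in $\sing(M)$, it is exactly $\sing(M)$. Second, by uniqueness and functoriality of Fox completion (Theorem~\ref{theo:Fox_completion_functor}), $\hat\U$ is isomorphic to the universal branched cover of the corresponding neighborhood of a singular point of $\mass{0}$, which is precisely a neighborhood of a singular point of $\mass{0,\infty}$. Together with the local Minkowski structure on the regular part, this exhibits $\widetilde M$ as a $\mass{0,\infty}$-manifold.

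The main obstacle is the injectivity of $i_*$, that is, the fact that a loop around the singular line stays of infinite order in $\pi_1(\reg(M))$. This is exactly where the parabolic, hence infinite-order, holonomy of $\mass{0}$ is indispensable: it upgrades ``$\hat\U\to\U$ is a branched covering'' to ``$\hat\U$ is \emph{the} universal branched covering of $\U$'', which simultaneously pins the local model down to $\mass{0,\infty}$ and guarantees that the branching is nontrivial at every point of $\sing(M)$.
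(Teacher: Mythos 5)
Your proposal is correct and takes essentially the same route as the paper: semi-local simple connectedness of $\reg(M)$ in $M$ via the codimension-two case of Remark \ref{rem:simple_cases}, Galoisian-ness from Proposition \ref{prop:Galoisian_semilocal}, the developping map from the explicit model computation for $\mass{0}$ fed into the developping-map proposition, and the non-trivial parabolic holonomy around the singular line to get exact branching and the $\mass{0,\infty}$ local models. Your argument that $i_*:\pi_1(\reg(\U))\rightarrow\pi_1(\reg(M))$ is injective (because an infinite-order holonomy cannot kill any power of the peripheral loop) simply makes explicit what the paper compresses into the phrase ``the singular locus being a 1-submanifold of non trivial holonomy''.
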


The goal of this section is to show, similarly to Cauchy-complete Cauchy-maximal $\mass{}$-manifolds, that Cauchy-complete Cauchy-maximal $\mass{0}$-manifolds can be realized as quotients of convex domains of Minkowski space. 

\begin{lem}\label{lem:increasing_map} Let $M$ be a  $\mass{A}$-manifold with $A=\{0,0\infty\}$ and $\D:M\rightarrow \mass{}$ an a.e. $\mass{}$-morphism. 
Then $\D$ is  increasing on $M$ and the causal order on $\mass{}$.
\end{lem}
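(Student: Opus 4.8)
The plan is to read the causal order $\leq_M$ on $M$ through future-directed causal curves and to check that $\D$ carries each such curve to a causally ordered pair of $\mass{}$; concretely, since $x\leq_M y$ exactly when a future-directed causal curve joins $x$ to $y$, it suffices to prove that $\D$ is increasing on every chart and then to chain the pieces by transitivity of $\leq$. So I would first fix a future-directed causal curve $\gamma:[0,1]\to M$ from $x$ to $y$ and, using that $M$ is second countable and locally connected, subdivide $[0,1]$ into finitely many subintervals $[s_i,s_{i+1}]$ so that each $\gamma_{|[s_i,s_{i+1}]}$ lies in a single chart, modelled either on $X=\mass{}$ (regular), on $\mass{0}$, or on $\mass{0,\infty}$.

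On each chart $\D$ coincides, up to postcomposition with an element $g$ of the structure group, with the model developing map. Since the linear part of the structure group is $\GG=\SO_0(1,2)$, which is orthochronous, $g$ preserves the time orientation and hence the causal order $\leq$ of $\mass{}$; so it is enough to see that the model developing maps are themselves increasing. For a regular chart this is immediate: the chart embedding into $X$ is a time-orientation preserving local isometry, so $\d\D$ sends future causal vectors to future causal vectors and $\D\circ\gamma_{|[s_i,s_{i+1}]}$ is future-directed causal, whence $\D(\gamma(s_i))\leq\D(\gamma(s_{i+1}))$. For a singular chart I would invoke the explicit developing map of $\mass{0,\infty}$ displayed in the previous subsection, whose image is $J^+(\Delta)$ and which sends the singular line $\{\r=0\}$, oriented by increasing $\tau$, onto the future-directed lightlike line $\Delta$ directed by $\au=(1,1,0)$; on the regular part of the chart it is again an orthochronous local isometry, so it is increasing there, and Proposition \ref{prop:quotient_chart} identifies the resulting map with the quotient chart for the BTZ topology.

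It then remains to cross the instants where $\gamma$ meets $\sing(M)$. Here I would use that $\D$ is continuous and that $\leq$ is a closed partial order on $\mass{}$: within a singular chart, the inequality $\D(\gamma(s))\leq\D(\gamma(s'))$ holds for all pairs of \emph{regular} parameters, and letting $s,s'$ tend to parameters sitting on $\sing(M)$ the closedness of $J^+$ in $\mass{}$ upgrades these to the endpoint inequality $\D(\gamma(s_i))\leq\D(\gamma(s_{i+1}))$ even when $\gamma$ enters, follows, or leaves the singular locus (the image then travels inside $J^+(\Delta)$ and, along $\sing(M)$, along the future lightlike $\Delta$). Chaining the finitely many inequalities by transitivity of $\leq$ gives $\D(x)\leq\D(y)$, which is the asserted monotonicity.

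The main obstacle I expect is precisely this behaviour at the singular locus: one must rule out that a causal curve crossing or running along $\sing(M)$ develops into something non-causal or past-directed. This is what the explicit BTZ computation of the preceding subsection supplies — the matching of the orientation of the singular line with the future lightlike direction $\au$, together with $\D(\mass{0,\infty})=J^+(\Delta)$ and Proposition \ref{prop:quotient_chart} — while the closedness of the causal relation in $\mass{}$ is what converts the continuity of $\D$ at singular instants into the required inequalities.
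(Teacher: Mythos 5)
Your proof follows essentially the same route as the paper's: both reduce the statement to the fact that the model developing maps are increasing (which your appeal to the explicit formula for $\mass{0,\infty}$ and the image $J^+(\Delta)$ supplies), and then chain the local inequalities along a future causal curve joining $x$ to $y$, exactly the paper's "locally increasing $+$ causal connectedness" argument. The paper's proof is simply a terser version of yours; your additional care at the singular locus (orthochronicity of the structure group, closedness of the causal relation $J^+$, the matching of the singular line's orientation with the future direction of $\Delta$) fills in details the paper leaves implicit rather than taking a different route.
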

\begin{proof} 
 The restriction of such a map to a chart neighborhood is the restriction of a developping map of the local model space. Since the developping maps of each model spaces are  increasing, $\D$ is locally increasing. Furthermore, since $M$ is connected, it is causally connected (ie for every $x,y\in M$ if $x\leq y$ there exists a future causal path from $x$ to $y$), therefore $\D$ is increasing.
 
\end{proof}
\begin{cor}\label{cor:increasing_map}
 For $M$ be a  $\mass{0}$-manifold, every developping map of $M$ is increasing.
\end{cor}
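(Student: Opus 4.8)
The plan is to apply Lemma \ref{lem:increasing_map} directly to the universal branched covering $\widetilde M$. First I would recall that, by the definition of a developping map, a developping map of $M$ is precisely an a.e. $\mass{}$-morphism $\D : \widetilde M \rightarrow \mass{}$, where $\widetilde M$ denotes the universal covering of $M$ possibly branched over $\sing(M)$. The domain of $\D$ is therefore $\widetilde M$ and not $M$ itself, so the whole task reduces to verifying that $\widetilde M$ meets the hypotheses of Lemma \ref{lem:increasing_map}.

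By the Proposition immediately preceding Lemma \ref{lem:increasing_map}, the universal branched covering $\widetilde M$ of a $\mass{0}$-manifold is a $\mass{0,\infty}$-manifold. Since $0\infty \in A = \{0,0\infty\}$, every $\mass{0,\infty}$-manifold is in particular a $\mass{A}$-manifold: its atlas uses only the single model $\mass{0,\infty}$, which is one of the two allowed model spaces. Hence $\widetilde M$ is a $\mass{A}$-manifold with $A = \{0,0\infty\}$, and $\D : \widetilde M \rightarrow \mass{}$ is an a.e. $\mass{}$-morphism, exactly as required in the statement of the Lemma.

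Applying Lemma \ref{lem:increasing_map} with $\widetilde M$ playing the role of the manifold then yields at once that $\D$ is increasing for the causal order of $\mass{}$, which is the assertion. I do not expect any genuine obstacle here: all the substance is carried by Lemma \ref{lem:increasing_map}, whose proof reduces the global statement to the increasingness of the developping maps of the two model spaces $\mass{0}$ and $\mass{0,\infty}$ together with the causal connectedness of a connected manifold. The only point deserving attention is the bookkeeping of where the developping map lives, namely recognizing that its source $\widetilde M$ falls into the model class $\{0,0\infty\}$ already handled by the Lemma.
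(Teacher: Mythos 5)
Your proof is correct and is precisely the argument the paper intends: the corollary is stated without proof as an immediate consequence of Lemma \ref{lem:increasing_map}, the implicit chain being exactly yours --- a developping map of $M$ has domain $\widetilde M$, the preceding Proposition makes $\widetilde M$ a $\mass{0,\infty}$-manifold, hence a $\mass{A}$-manifold with $A=\{0,0\infty\}$, and the Lemma applies (connectedness of $\widetilde M$ being automatic since branched coverings have connected total space). Your emphasis on the bookkeeping point that the developping map lives on $\widetilde M$ rather than $M$ is exactly the right thing to single out.
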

\begin{rem} It is noteworthy though not usuful here, that the universal branched covering of a $\mass{0}$-manifold is thus always causal.
\end{rem}

\begin{thm} Let $M$ be a globally hyperbolic Cauchy-complete Cauchy-maximal $\mass{0}$-manifold, let $\widetilde M$ its universal covering branched over $\sing(M)$, of  developping map and holonomy  $(\D,\rho)$. We note $\Gamma:=\Gamma(\widetilde M/M)\simeq\pi_1(\reg(M))$ its Galois group.  Then, up to time reversal of $M$:
\begin{itemize}
 \item $\D$ is injective
 \item $\rho$ is faithful, discrete and without torsion; 
 \item there exists a $\Gamma$-invariant family of relatively open future complete lightlike  rays $(\Delta_i)_{i\in I}$ and a $\Gamma$-invariant family of lightlike planes $(\Pi_j)_{j\in J}$ such that 
 $$\D(\widetilde M) = \left(\bigcap_{i\in I} J^+(\Delta_i) \right) \cup \left(\bigcap_{j\in I} I^+(\Pi_j) \right)$$
 
 \item endowing $\D(\widetilde M)$ with the BTZ topology in the neighborhood of the lightlike rays $(\Delta_i)_{i\in I}$, the developping map induces an isomorphism
 $$ M\xrightarrow{~~\overline D~~} \rho\backslash\D(\widetilde M
 ) $$
\end{itemize}

\end{thm}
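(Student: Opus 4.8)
The plan is to mirror, in the presence of BTZ lines, the classical uniformization of maximal globally hyperbolic Cauchy-complete flat spacetimes as quotients of regular domains (Mess--Bonsante--Barbot, see \cite{Particules_1}), and to graft the singular locus onto the resulting regular domain by means of the local BTZ model of Proposition~\ref{prop:quotient_chart}. We already know from the previous results that $\widetilde M \to M$ is Galoisian, exactly branched over $\sing(M)$, carries a developping map $\D$ with holonomy $\rho$ by Lemma~\ref{lem:holonomy}, and that $\D$ is increasing by Corollary~\ref{cor:increasing_map}; the argument will combine these with global hyperbolicity and Cauchy-completeness.

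First I would treat the holonomy. The regular locus $\reg(M)$ is a flat globally hyperbolic spacetime whose Cauchy surface, punctured along $\sing(M)$, carries a complete hyperbolic structure of finite type (Cauchy-compactness of $M$). Its linear holonomy $\L\circ\rho:\Gamma\to\GG$ is the holonomy of that hyperbolic surface, hence a finitely generated Fuchsian group of finite covolume; the loops encircling the components of $\sing(M)$ map to parabolic elements, exactly as the parabolic holonomy of $\mass{0}$ computed above. From the classification of the regular case one then reads off that $\rho$ is faithful, discrete and without torsion, after orienting $M$ so that its development is future-complete (this is the ``up to time reversal'' clause).

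Next I would establish injectivity of $\D$ and identify the image. Using that $\D$ is increasing and that $M$ is Cauchy-complete and globally hyperbolic, the restriction of $\D$ to $\reg(\widetilde M)$ develops the regular part injectively onto a future-complete regular domain $\Omega_0 = \bigcap_{j\in J} I^+(\Pi_j)$, whose null support planes $\Pi_j$ are prescribed by $\rho$; Cauchy-maximality forces equality with the canonical maximal such domain. Each parabolic fixed direction produces a relatively open future-complete lightlike ray $\Delta_i$ on $\partial\Omega_0$, and by Proposition~\ref{prop:quotient_chart} every connected component of $p^{-1}(\sing(M))$ develops injectively onto such a ray; completeness of the Cauchy surface guarantees the rays are future-complete. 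Gluing the regular and singular pictures shows $\D$ is injective on all of $\widetilde M$ and that $\D(\widetilde M)$ is the stated union, the two families $(\Delta_i)_{i\in I}$ and $(\Pi_j)_{j\in J}$ being $\Gamma$-invariant by $\rho$-equivariance of $\D$.

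Finally, discreteness of $\rho$ together with injectivity of $\D$ makes $\Gamma$ act freely and properly discontinuously on $\D(\widetilde M)$ through $\rho$, so $\D$ descends to a continuous bijection $\overline D : M \to \rho\backslash\D(\widetilde M)$; away from the rays this is a $\mass{}$-isomorphism by the standard theory, while near each $\Delta_i$ Proposition~\ref{prop:quotient_chart} shows it is an isomorphism precisely when $J^+(\Delta_i)$ is given the BTZ topology, so $\overline D$ is open, hence an isomorphism of singular $\mass{}$-manifolds. The main obstacle I expect is the identification of the image and injectivity \emph{across} the singular lines: one must reconcile the completeness of a Cauchy surface that passes through the singular locus with the convexity of the regular domain, verify that Cauchy-maximality yields $\Omega_0$ exactly (neither more nor less, except for its boundary rays), and match the developped rays with the parabolic cusps of the holonomy while controlling the interplay of the two topologies at the boundary.
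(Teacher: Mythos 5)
Your overall strategy coincides with the paper's (reduce to Barbot's theorem on the regular part, show the BTZ lines develop onto lightlike rays in $\partial\Omega$, then recover $M$ as a quotient after switching to the BTZ topology), but you leave unresolved precisely the step that carries the weight of the theorem: injectivity of $\D$ at and across the singular lines, which you yourself flag as ``the main obstacle'' without supplying an argument. ``Gluing the regular and singular pictures'' does not do it: a priori two distinct BTZ lines of $\widetilde M$, whose stabilizers are conjugate in $\Gamma$ or whose parabolic holonomies share a fixed line, could develop onto the same ray. The paper closes this gap with a short but essential causal argument: first, each BTZ line develops into the fixed line $\Delta$ of its parabolic stabilizer, and $\Delta\cap\Omega=\emptyset$ because $\rho(\Gamma)$ does not act freely on $\Delta$ while it acts freely on $\Omega$, so $\D(\sing(\widetilde M))\subset\partial\Omega$; second, if $\D(x)=\D(y)$ then $\D(I^+(x))=I^+(\D(x))=I^+(\D(y))=\D(I^+(y))\subset\Omega$, and since chronological futures lie in $\reg(\widetilde M)$ where $\D$ is a bijection onto $\Omega$, one gets $I^+(x)=I^+(y)$; global hyperbolicity makes $\widetilde M$ future distinguishing, hence $x=y$. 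Nothing in your proposal substitutes for this.

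Two further points need repair. First, your treatment of the holonomy invokes Cauchy-compactness and a finite-covolume Fuchsian linear holonomy, but the theorem assumes Cauchy-completeness and Cauchy-maximality, not Cauchy-compactness; the paper instead quotes Theorem 3 of \cite{BTZext} (the regular part of $M$ is again globally hyperbolic, Cauchy-complete and Cauchy-maximal) and then applies Barbot's result \cite{barbot_globally_2004} directly to $\reg(M)$, which yields faithfulness, discreteness, torsion-freeness and $\Omega=\bigcap_j I^+(\Pi_j)$ in one stroke. Second, your final step asserts that $\Gamma$ acts \emph{freely} and properly discontinuously on $\D(\widetilde M)$; this is false on the rays $\Delta_i$, which are fixed pointwise by the corresponding parabolic subgroups (exactly as $\Gamma(\mass{0,\infty}/\mass{0})\simeq\ZZ$ fixes $\sing(\mass{0,\infty})$ pointwise). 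The descent to $\overline D$ does not need freeness: it follows from injectivity of $\D$, $\rho$-equivariance, the Galoisian property $M\simeq\Gamma\backslash\widetilde M$, and openness of $\D$ onto its image for the BTZ topology, which the paper establishes by computing the images of diamond neighborhoods $\mathrm{Int}(J^+(p)\cap J^-(q))$ of singular points inside causally convex charts.
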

\begin{proof}
 From Theorem 3 of \cite{BTZext}, the regular part of $M$ is globally hyperbolic, Cauchy-maximal and Cauchy-complete; therefore, by general result of Barbot \cite{barbot_globally_2004}, the restriction $\D_{|\reg(\widetilde M)}$ is injective, $\rho$ is faithful, discrete and without torsion and $$ \Omega:=\D(\reg(\widetilde M)) =\bigcap_{j\in I} I^+(\Pi_j)$$ for some $\Gamma$-invariant family $(\Pi_j)_{j\in J}$ of lightlike planes. Furthermore, $\Gamma$ acts totally discontinuously and freely on $\Omega$. Notice that $\Omega$ is future complete ie for all $x\in \Omega, J^+(x)\subset \Omega$.
 
 Note that from Corollary \ref{cor:increasing_map}, $\D$ is increasing.  
 Let $x\in \sing(\widetilde M)$, its stabilizer $\Gamma_x$ is isomorphic to $\ZZ$ and for $\gamma\in \Gamma_x\setminus\{1\}$, $\rho(\gamma)$ is parabolic. Therefore, the set of fix points of $\Gamma_x$ in $\mass{}$ is a lightlike line $\Delta$, the restriction of $\D$ to the BTZ line $\Delta$ through $x$ is then increasing  with image in $\Delta$ and since $\Delta_x$ is a totally ordered open future half line, so is its image $\D(\Delta_x)\subset\Delta$.
 Since $\Gamma$ does not act freely on $\Delta$  via $\rho$, $\Delta$ does not intersects $\Omega$ thus $\D(\Delta_x) \subset \partial\Omega$. 
 Finally, since $J^+(\D(\Delta_x)) = I^+(\D(\Delta_x))\cup \D(\Delta_x) $ and since $I^+(\D(\Delta_x))\subset \Omega$ we conclude that $\D(\widetilde M)$ has the wanted form. 
 
 Let $x,y\in \widetilde M$ such that $\D(x)=\D(y)$. We have $$\D(I^+(x))=I^+(\D(x))=I^+(\D(y))=\D(I^+(y))\subset \Omega$$
 and since $I^+(x)$ and $I^+(y)$ are subsets of $\reg(M)$ and $\D_{|\reg(M)}^{|\Omega}$ is a bijection we deduce that $I^+(x)=I^+(y)$. Since $\widetilde M$ is globally hyperbolic, it is future distinguishing (ie the function $ z\mapsto I^+(z)$ is injective) and $x=y$. Finally, $\D:\widetilde M \rightarrow \mass{}$ is injective.
 
 The map $\D_{|\reg(\widetilde M)}$ is open since it is a local homeomorphism. The image of a neighborhood basis of a singular point $x\in \sing(\widetilde M)$ is obtains by considering sets of the form $I_{p,q}:=\mathrm{Int}(J^+(p)\cap J^-(q))$ with $p\in J^-(x)$ and $q\in I^+(x)$. By taking $p,q$ sufficiently close to $x$, such a neighborhood can be chosen in a causally convex in $M$ chart neighborhood $\U$ of $x$.  This way $\mathrm{Int}(J^+(p)\cap J^-(q)) = \mathrm{Int}(J_\U^+(p)\cap J_\U^-(q))$ and the image by $\D$ of such a domain is the exactly $(I^+(\D(p))\cup \D(]p,+\infty[)\cap I^-(\D(q))$ with $]p,+\infty[$ the future half BTZ ray from $p$ in $\widetilde M$. Therefore, the image of such $\D(I_{p,q})$ together with the topology induced by $\mass{}$ generates the BTZ topology on $\D(\widetilde M)$.
 
 For the BTZ topology on $\D(\widetilde M)$, the map $\D : \widetilde M \rightarrow \D(\widetilde M)$ is then a $\Gamma$-invariant homeomorphism and induces a isomorphism of singular $\mass{}$-manifolds $\Gamma\backslash\widetilde M\simeq \rho\backslash\D(\widetilde M) $. Since $\widetilde M \rightarrow M$ is Galoisian, $M\simeq \Gamma\backslash\widetilde M$ and 
$M\simeq\rho\backslash\D(\widetilde M)$.

\end{proof}

%%%%%%%%%%%%%%%%%%%%   End of main body of article
%
%                             References
%
%   BiBTeX users uncomment the following line:
%
\bibliographystyle{gtart}
\bibliography{note}	

\end{document}